\title{A Lie algebraic pattern behind logarithmic CFTs}
\author{Shoma Sugimoto}
\address{Division of Physics,
Mathematics and Astronomy, California Institute of Technology}
\email{shomasugimoto361@gmail.com}
\author{Hao Li}
\address{Chiral Representation Theory Unit, Okinawa Institute of Science and Technology (OIST)}
\email{hao.lilwh@gmail.com}
\definecolor{rouge}{rgb}{0.85,0.1,.4}
\definecolor{bleu}{rgb}{0.1,0.2,0.9}
\definecolor{violet}{rgb}{0.7,0,0.8}
\theoremstyle{definition}
\newtheorem{definition}{Definition}[section]
\newtheorem{remark}[definition]{Remark}
\newtheorem{example}[definition]{Example}
\theoremstyle{plain}
\newtheorem{theorem}
[definition]
{Theorem}
\newtheorem{corollary}[definition]{Corollary}
\newtheorem{lemma}[definition]{Lemma}
\newtheorem{conjecture}[definition]{Conjecture}
\numberwithin{equation}{section}
\newcommand{\Z}{\mathbb{Z}}
\newcommand{\R}{\mathbb{R}}
\newcommand{\C}{\mathbb{C}}
\newcommand{\ch}{\operatorname{ch}}
\newcommand{\sch}{\operatorname{sch}}
\newcommand{\ad}{\mathrm{ad}}
\newcommand{\g}{\mathfrak{g}}
\newcommand{\h}{\mathfrak{h}}
\newcommand{\sll}{\mathfrak{sl}}
\newcommand{\osp}{\mathfrak{osp}}
\newcommand{\cryov}[2]{{#1}\uparrow{#2}}
\def\beq#1\eeq{\begin{align}#1\end{align}}
\newcommand{\bi}{\begin{itemize}}
\newcommand{\ei}{\end{itemize}}
\begin{document}

\maketitle

\begin{abstract}
We introduce a purely Lie algebraic formalization of the Feigin--Tipunin's geometric construction of logarithmic CFTs/VOAs.
After reformulating the geometric representation theory of FT construction under this new setting, within this framework, we uniformly construct the (multiplet) principal W-algebras at positive integer level associated with any simple Lie algebra $\g$ and Lie superalgebra $\osp(1|2r)$, thereby establishing Weyl-type character formulas and simplicity theorems that extend the first author’s previous results.
\end{abstract}

\section{Introduction}
\label{section: introduction}
The study of logarithmic conformal field theory (LCFT) has become increasingly important in recent years.
However, LCFT is much more complicated than rational cases and has not been well studied.
In 2010, Feigin--Tipunin \cite{FT} proposed a geometric construction of LCFTs, known as the
\textit{Feigin--Tipunin (FT) construction}, defined as the $0$-th sheaf cohomology $H^0(G\times_BV)$ of $G$-equivariant VOA bundle $G\times_BV$ over the flag variety $G/B$. 
They further claimed that
\textit{multiplet W-algebras}\footnote{In the case of $\g=\sll_2$ with the principal nilpotent element, this is one of the most famous and well studied LCFTs called \textit{triplet Virasoro algebra} (see e.g. \cite{AM}). Here the ``multiplet W-(super)algebra" is intended to be a generic term for variations of the triplet Virasoro algebra (e.g., the cases of singlet/doublet, higher rank, general W-algebra, etc.).} can be constructed and studied using their method.
The first author \cite{Sug,Sug2} provided
rigorous proofs of several claims made in \cite{FT}.
In \cite{CNS}, the authors investigated the $V^{(p)}$-algebra ($=$ doublet affine $\sll_2$) \cite{A1,ACGY} by combining the method in \cite{Sug,Sug2} with the inverse quantum Hamiltonian reduction \cite{A2}.
On the other hand, a close examination of \cite{Sug,Sug2} suggests that, although formulated in the framework of VOAs, the essential arguments ultimately reduce to properties of an underlying Lie algebraic setting.
As detailed below, extracting such a Lie algebraic setting and argument not only provides a clear perspective for the study of LCFTs, 
but also sheds light 
\cite{Sug3,Sug4} 
on the expected correspondence between LCFTs and $3$-manifolds via the \textit{$\hat{Z}$-invariants} \cite{CCFGH,CCKPS,CFGHP,GPPV}. 

In Part \ref{part 1}, we introduce such a purely Lie algebraic setting, tentatively named \textit{shift system}, and prove that the main results of \cite{Sug} can be understood and derived from the setting (Theorem \ref{main theorem of part 1}). 
In other words, if a VOA-module $V$ (e.g. irreducible lattice VOA-module) fits into a shift system, the corresponding module over the multiplet W-algebra satisfies Theorem \ref{main theorem of part 1}.
In particular, when it admits the FT construction $H^0(G\times_BV)$, it can
be studied using methods from geometric representation theory.
As demonstrated in Section \ref{section:construction of shift system}, the verification of the axiom of shift system is basically reduced to a relatively easy computation ($=$ Serre relation of long screening operators) on $V$ and the rank $1$ case ($=$ ``Felder complex" of short screening operators). 
Hence, the shift system provides a useful framework for reducing the study of higher rank LCFTs to much easier cases: the rank $1$ cases and free field algebras.
Moreover, with some information about the corresponding ``W-algebra" $H^0(G\times_BV)^G$ (e.g. Kazhdan--Lusztig decomposition), the main results of \cite{Sug2} are proved (Section \ref{section: simplicity revisit}).
Using these results, in Part \ref{part 2}, we will give the shift system and the FT constructions corresponding to the (multiplet) principal W-(super)algebras $\mathbf{W}^k(\g)$ for any simple Lie algebra $\g$ and Lie superalgebra $\g=\mathfrak{osp}(1|2n)$, respectively, and prove almost all the main results of \cite{Sug,Sug2} for these new cases (Theorem \ref{main theorem of part 2}).
At first glance, these results themselves may appear to be just a straightforward extension of \cite{Sug,Sug2}.
However, as mentioned briefly at the end of the last paragraph, the value of this paper lies in a \textit{change of perspective}, which greatly increases the transparency and flexibility of the discussion in LCFTs, by focusing not on individual examples of LCFT and their complicated VOA-structures but on a simple Lie algebraic pattern behind them in common.
In that sense, the application to the two examples in Part \ref{part 2} is a demonstration of this new perspective which will itself be further developed and applied to a much broader class of expected LCFTs corresponding to $3$-manifolds in the future (see the last of introduction below).

Let us describe the overview of this paper.
For a finite dimensional simple Lie algebra $\g$ of rank $r$, we consider a triple $(\Lambda,\uparrow,\{V_\lambda\}_{\lambda\in\Lambda})$, named \textit{shift system} (see Definition \ref{Def FT data} for the full axioms).
Here 
$\Lambda$ has the action $\ast\colon W\times\Lambda\rightarrow\Lambda$ of the Weyl group $W$ of $\g$,
$\uparrow\colon W\times\Lambda\rightarrow\mathfrak{h}^\ast$ is a map (\textit{shift map}) satisfying a few simple axioms, and $V_\lambda$ is a graded weight $B$-module with a parameter $\lambda\in\Lambda$ and  
$B$-module homomorphisms called \textit{short screening operators} $Q_{i,\lambda}\colon V_\lambda\rightarrow V_{\sigma_i\ast\lambda}(\sigma_i\uparrow\lambda)$ satisfying the
``Felder complex" \cite{Fel} for each direction $i\in I:=\{1,\dots,r\}$, where $(\sigma_i\uparrow\lambda)$ means the shift of Cartan weight. 
Recall that in \cite{FT,Sug,Sug2}, $V_\lambda$ is an irreducible module $V_{\sqrt{p}(Q+\lambda)}$ over the lattice VOA $V_{\sqrt{p}Q}$ with a conformal grading $V_\lambda=\bigoplus_{\Delta}V_{\lambda,\Delta}$ and a parameter 
$\lambda\in\Lambda
\simeq
(\sqrt{p}Q)^\ast/\sqrt{p}Q
\simeq
\tfrac{1}{p}Q^\ast/Q$, and the long/short screening operators define $B$- and $W$-actions on $V_\lambda$ and $\Lambda$, respectively.
The shift of Cartan weights by the short screening operators satisfies the axioms of the shift map above.
The situation in \cite{CNS} is a little more complicated, but essentially the same.
Therefore, the shift system can be regarded as an abstraction of the relationship between such VOA-modules and screening operators acting on them, forgetting the VOA-structure, and the shift map $\uparrow\colon W\times\Lambda\rightarrow P$ records how the
Cartan weight is shifted when short screening operators are applied along a Weyl group element.
The first main theorem of Part \ref{part 1} asserts that all the main results of \cite{Sug} hold even under such abstraction.

\begin{theorem}\label{main theorem of part 1}
Let $(\Lambda,\uparrow,\{V_\lambda\}_{\lambda\in\Lambda})$ be a shift system (see Definition \ref{Def FT data}).
Recall that the \textit{evaluation map}
$\mathrm{ev}\colon H^0(G\times_B V_\lambda)\to V_\lambda$,
$s\mapsto s(\mathrm{id}_{G/B})$,
sends a global section of the bundle $G\times_B V_\lambda$ over $G/B$ to its
value at the base point $\mathrm{id}_{G/B}=B/B$.
\begin{enumerate}
\item\label{main theorem 1(1)}
\textbf{(Feigin--Tipunin construction)}
The evaluation map defines an injection
\begin{align*}
\operatorname{ev}\colon H^0(G\times_BV_\lambda)\rightarrow\bigcap_{i\in I}\ker Q_{i,\lambda}|_{V_\lambda},\ \ s\mapsto s(\mathrm{id}_{G/B})
\end{align*}
and it is an isomorphism iff $\lambda\in\Lambda$ satisfies the following ``weak condition":
\begin{align}
\text{For any $(i,j)\in I\times I$, $\lambda\in\Lambda^{\sigma_j}$ or $(\cryov{\sigma_j}{\lambda},\alpha_i^\vee)=-\delta_{ij}$.}
\tag{weak}\label{weak condition}
\end{align}
Note that the image of $H^0(G\times_BV_\lambda)$ is the maximal $G$-submodule\footnote{Namely, the maximal $B$-submodule such that its $B$-action can be extended to the $G$-action.}
of $V_\lambda$.
\item\label{main theorem 1(2)}
\textbf{(Borel--Weil--Bott-type theorem)}
For a minimal expression \[w_0=\sigma_{i_{l(w_0)}}\cdots\sigma_{i_1}\sigma_{i_0}\] of the longest element $w_0$ of $W$ (where $\sigma_{i_0}=\operatorname{id}$ for convenience), if $\lambda\in\Lambda$ satisfies the following ``strong condition":
\begin{align}
\text{
For any $0\leq m\leq N-1$,
$(\sigma_{i_m}\cdots\sigma_{i_0}\uparrow\lambda,\alpha_{i_{m+1}}^\vee)=0$,
}
\tag{strong}\label{strong condition}
\end{align}
then we have a natural $G$-module isomorphism
\begin{align*}
H^n(G\times_BV_\lambda)\simeq H^{n+l(w_0)}(G\times_BV_{w_0\ast\lambda}(w_0\uparrow\lambda)).
\end{align*}
\end{enumerate}
\end{theorem}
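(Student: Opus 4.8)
The plan is to reduce the entire statement to the rank-$1$ case $\g=\sll_2$, $G/B=\mathbb{P}^1$, where the ``Felder complex'' is part of the data of the shift system, and then to propagate the rank-$1$ answer through the tower of $\mathbb{P}^1$-fibrations $G/B\to G/P_i$ attached to the minimal parabolics $P_i\supset B$ for $i\in\Pi$. Throughout I would present a global section of $G\times_BV_\lambda$ as a $B$-equivariant regular map $\tilde s\colon G\to V_\lambda$, $\tilde s(gb)=b^{-1}\cdot\tilde s(g)$, so that $\operatorname{ev}(s)=\tilde s(e)$, and exploit that $V_\lambda=\bigoplus_\Delta V_{\lambda,\Delta}$ is graded with finite-dimensional pieces, reducing each cohomological computation to finite-dimensional $B$-modules degree by degree.

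For part \ref{main theorem 1(1)}, I would establish injectivity and the identification of the image at once. Any $B$-submodule $U\subseteq V_\lambda$ whose $B$-action extends to $G$ yields a section $s_u(g)=g^{-1}\cdot u$ with $\operatorname{ev}(s_u)=u$; hence evaluation is the identity on these ``geometric'' sections, and the point is that every global section arises this way, so that the image is exactly the maximal $G$-submodule. To see both the rigidity and that the image lands in $\bigcap_{i}\ker Q_{i,\lambda}$, I would restrict a section $s$ to each fibre $P_i/B\cong\mathbb{P}^1$ through the base point: by the rank-$1$ axiom its global sections over this $\mathbb{P}^1$ are identified, via evaluation at the base point, with $\ker Q_{i,\lambda}$. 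This forces $\operatorname{ev}(s)\in\ker Q_{i,\lambda}$ for every $i$ and makes the rank-$1$ injectivity propagate along the Bruhat stratification of $G/B$. The remaining ``iff'' is the assertion that $\operatorname{ev}$ is onto $\bigcap_i\ker Q_{i,\lambda}$ precisely when this intersection is itself a $G$-submodule; the obstruction to extending an element of the intersection to a global section is a first-cohomology class supported on the codimension-one Schubert data, and I would show, by a pairwise (rank-$2$) analysis in the directions $(i,j)$, that this obstruction vanishes for all $(i,j)$ exactly under the condition \eqref{weak condition}, namely $\lambda\in\Lambda^{\sigma_j}$ or $(\cryov{\sigma_j}{\lambda},\alpha_i^\vee)=-\delta_{ij}$.

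For part \ref{main theorem 1(2)}, I would iterate the rank-$1$ Bott reflection along the reduced word $w_0=\sigma_{i_{l(w_0)}}\cdots\sigma_{i_1}$ (with $\sigma_{i_0}=\operatorname{id}$). At the $m$-th step I would use the Grothendieck--Leray spectral sequence for the $\mathbb{P}^1$-fibration $G/B\to G/P_{i_{m+1}}$: the strong condition $(\sigma_{i_m}\cdots\sigma_{i_0}\uparrow\lambda,\alpha_{i_{m+1}}^\vee)=0$ places the fibrewise cohomology in the Bott-singular regime, where it is concentrated in a single degree and raises the cohomological degree by exactly one while replacing $\lambda$ by $\sigma_{i_{m+1}}\ast\lambda$ with its Cartan weight shifted according to the shift map. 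Composing the $l(w_0)$ steps raises the degree by $l(w_0)$, and the axioms of the shift map (the cocycle-type compatibility of $\uparrow$ with the $W$-action) ensure that the accumulated shift is $w_0\uparrow\lambda$, independent of the chosen reduced word, yielding the $G$-equivariant isomorphism $H^n(G\times_BV_\lambda)\simeq H^{n+l(w_0)}(G\times_BV_{w_0\ast\lambda}(w_0\uparrow\lambda))$.

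I expect the main obstacle to be the higher-rank gluing in part \ref{main theorem 1(1)}: converting pointwise membership in $\bigcap_i\ker Q_{i,\lambda}$, which is visible only on the individual rank-$1$ fibres, into a genuine global section requires controlling the $H^{\geq1}$ contributions and showing that the relevant spectral sequences degenerate exactly under \eqref{weak condition} (respectively \eqref{strong condition}). Concretely, the rank-$2$ interaction of two screening directions, together with the verification that the abstract shift-map axioms supply precisely the numerical input demanded by the geometric Bott reflections, is the crux; once the cases of rank $\leq 2$ are pinned down, the general statement should follow by the stratification and the induction over the reduced word.
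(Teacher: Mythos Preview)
Your outline for part \eqref{main theorem 1(2)} is essentially the paper's argument: iterate along a reduced word, at each step use the Leray spectral sequence for $G/B\to G/P_{i_{m+1}}$, and show the fibrewise cohomology is concentrated in a single degree. One correction: the mechanism is not that a single bundle sits in the ``Bott-singular regime''. Rather, the paper applies the Felder short exact sequence $0\to W_{i,\lambda}\to V_\lambda\to W_{i,\sigma_i\ast\lambda}(\sigma_i\uparrow\lambda)\to 0$ (twisted appropriately) and computes each piece on $\mathbb P^1$: the strong condition $(\sigma\uparrow\lambda,\alpha_i^\vee)=0$ together with the axiom $(\sigma_i\uparrow(\sigma\ast\lambda),\alpha_i^\vee)=-1$ makes one term of the long exact sequence vanish (the weight $-1$ line bundle has no cohomology) and forces $H^0(P_i\times_BV_{\sigma\ast\lambda}(\sigma\uparrow\lambda))\simeq W_{i,\sigma\ast\lambda}(\sigma\uparrow\lambda)\simeq H^1(P_i\times_BV_{\sigma_i\sigma\ast\lambda}(\sigma_i\sigma\uparrow\lambda))$. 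So the degree shift comes from comparing \emph{two} bundles via the short exact sequence, not from a single singular weight.

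For part \eqref{main theorem 1(1)} your sketch of injectivity is where the real gap lies. You assert that ``rank-$1$ injectivity propagates along the Bruhat stratification'', but knowing $\operatorname{ev}$ is injective on each $P_i/B$-fibre does not by itself force global injectivity, and your claim that ``every global section arises as $g\mapsto g^{-1}\cdot u$'' is exactly what needs proof. The paper's argument is representation-theoretic rather than geometric: decompose $H^0(G\times_BV_\lambda)$ as a $G$-module $\bigoplus_\beta L_\beta\otimes W_\beta$, observe that a highest weight vector $s\in W_\beta$ restricted to the big cell $N_+\cdot e$ is constant (hence $\operatorname{ev}|_{W_\beta}$ is injective), and then invoke a key lemma (Lemma~\ref{Sug-3.18}, resting on the integrability axiom via Remark~\ref{rmk easy facts on FT data}\eqref{Sug 3.14-3.16}) that any nonzero $x\in V_\lambda^{h=\beta}$ generates a copy of $L_\beta$ under $U(\mathfrak b)$; this upgrades injectivity on $W_\beta$ to injectivity on $L_\beta\otimes W_\beta$, and a minimality-of-$\beta$ argument handles linear independence across isotypic pieces. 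For the ``iff'', the paper does not phrase the obstruction cohomologically but checks directly (Lemmas~\ref{Sug-4.3} and~\ref{Sug-4.4.2}) whether $\bigcap_i W_{i,\lambda}$ is closed under each raising operator $e_j$: closure holds under \eqref{weak condition} by a short Serre-relation computation, and the ``only if'' direction constructs, when \eqref{weak condition} fails for some pair $(i,j)$, an explicit element of the intersection whose $e_j$-orbit leaves it. Your pairwise rank-$2$ intuition is correct, but the argument is purely algebraic rather than an $H^1$-obstruction analysis.
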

Note that the weak/strong conditions in Theorem \ref{main theorem of part 1} are special cases of Definition \ref{Def FT data}\eqref{length and carry-over}.
That is, these conditions  are constraints on the shifts of Cartan weights.

Since the constructions and homomorphisms in Theorem \ref{main theorem of part 1} are natural, they are compatible with additional algebraic (e.g, VOA-module) structures.
In particular, if $V_0$ is a VOA and $V_\lambda$ are $V_0$-modules, then $H^0(G\times_BV_0)$ has the induced VOA-structure and $H^n(G\times_BV_\lambda)$ are $H^0(G\times_BV_0)$-modules (see \cite[Section 2.1, Corollary 2.21]{Sug2}).
It allows us to develop a geometric representation theory of multiplet W-(super)algebras, traditionally defined in the form of the right-hand side of Theorem \ref{main theorem of part 1}\eqref{main theorem 1(1)}.
Except for rank 1 cases, these VOAs are difficult to study algebraically due to their complicated VOA structures.
However, the geometric construction allows us to avoid the bottleneck and obtain a variety of results qualitatively, as in \cite{Sug,Sug2,CNS}.

In Part \ref{part 2}, we demonstrate the usefulness of the shift system and Theorem \ref{main theorem of part 1} by applying them to specific VOSAs.
Let us consider the rescaled root lattice $\sqrt{p}Q$ for some $p\in\Z_{\geq 1}$.
We consider the case where $\sqrt{p}Q$ is positive-definite integral, namely, $p\in r^\vee\Z_{\geq 1}$ for the lacing number $r^\vee$ or $p$ is odd and $\g=B_r$.
In the first case, $\sqrt{p}Q$ is even and we consider $V_\lambda=V_{\sqrt{p}(Q+\lambda)}$.
On the other hand, in the second case, $\sqrt{p}Q$ is odd and we need a modification $V_\lambda=V_{\sqrt{p}(Q+\lambda)}\otimes F$ by the free fermion $F$.
In Section \ref{section:construction of shift system}, from minimal natural assumptions \eqref{setup in the non-super case} or \eqref{setup in the super case}, shift systems $(\Lambda,\uparrow,\{V_\lambda\}_{\lambda\in\Lambda})$ are uniquely constructed (Theorem \ref{main theorem in Section 2}).
As noted in the second paragraph above, only relatively easy computations on $V_\lambda$ and some basic results in rank $1$ cases (\cite{AM} and \cite{AM1,AM2}) are used here.
Therefore, Theorem \ref{main theorem of part 1} is applied to our cases (for more detail, see Section \ref{section: main results}).

Let us explain the two consequences of Theorem \ref{main theorem of part 1}\eqref{main theorem 1(2)}, namely, the \textit{Weyl-type character formula} and the \textit{simplicity theorem} (see Section \ref{subsection: Weyl-type character formula} and \ref{section: simplicity revisit}).
The natural $G$-action on $H^0(G\times_BV_\lambda)$ gives the decomposition
\begin{align}
\label{the decomposition}
H^0(G\times_BV_\lambda)
\simeq 
\bigoplus_{\beta\in P_+}
L_\beta\otimes\mathcal{W}_{-\beta+\lambda},
\end{align}
where $L_\beta$ is the irreducible $\g$-module with highest weight $\beta$, and $\mathcal{W}_{-\beta+\lambda}$ is the multiplicity of a weight vector of $L_\beta$.
In Part \ref{part 2}, we consider additional VOA structures, where $\mathcal{W}_0\simeq H^0(G\times_BV_0)^G$ is a subalgebra of the multiplet W-(super)algebra $H^0(G\times_BV_0)$ and $\mathcal{W}_{-\beta+\lambda}$ is a $\mathcal{W}_0$-modules.
If $\lambda$ satisfies \eqref{strong condition}, then by combining $H^{n>0}(G\times_BV_\lambda)\simeq 0$ with the Atiyah--Bott localization formula \cite{AB}, we obtain the Weyl-type character formula
\begin{align}\label{character formula in introduction}
\ch_q H^0(G\times_BV_\lambda)
&=
\sum_{\beta\in P_+}\operatorname{dim}L_{\beta}
\ch_q\mathcal{W}_{-\beta+\lambda}\\
&=
\sum_{\beta\in P_+}\operatorname{dim}L_{\beta}
\sum_{\sigma\in W}(-1)^{l(\sigma)}\ch_q V_{\sigma\ast\lambda}^{h=\beta-\sigma\uparrow\lambda},
\end{align}
where $V_\mu^{h=\gamma}$ is the subspace of $V_\mu$ at the Cartan weight $h=\gamma$.
On the other hand, by combining the case $n=0$ with Serre duality, we have $H^0(G\times_BV_\lambda)\simeq H^0(G\times_BV_{\lambda'})^\ast$ for some $\lambda'\in\Lambda$.
In the VOA side, it leads to the self-duality and simplicity of the vacuum case $H^0(G\times_BV_0)$.
By the quantum Galois theory \cite{DM, McR2}, $\mathcal{W}_{-\beta}$ are also simple as $\mathcal{W}_0$-modules.
In addition, if a Kazhdan--Lusztig-type character formula for the simple quotient of $\mathcal{W}_{-\beta+\lambda}$ is known, by comparing it with \eqref{character formula in introduction}, we can extend the simplicity to the whole $\lambda\in\Lambda$ satisfying \eqref{strong condition}.
For more detail, see Section \ref{section: simplicity revisit}.

Let us go back to our special cases above.
In our cases, $\mathcal{W}_0$ is the principal W-(super)algebra $\mathbf{W}^k(\g)$ or $\mathbf{W}^k(\osp(1|2r))$, and thus we have the Kazhdan--Lusztig type character formula derived from \cite{KT,KT1}  
and the exactness of $+$-reduction $H^0_{\mathrm{DS},+}(\cdot)$ in \cite{Ar} (however, in the second case we assume the latter).
By applying the discussion in the last paragraph, 
we can extend the main results of \cite{ArF} to our cases
(Theorem \ref{simplicity theorem in non-super case} and \ref{simplicity theorem in super case}).

\begin{theorem}
\label{main theorem of part 2}
Let us consider the setup in Theorem \ref{main theorem in Section 2} 
and Section \ref{subsection: preliminary from W-algebra}.

Then for each case, we have 
$\mathcal{W}_0\simeq \mathbf{W}^{k}(\g)\simeq\mathbf{W}^{\check{k}}({}^L\g)$
and 
$\mathcal{W}_0\simeq \mathbf{W}^{k}(\osp(1|2r))\simeq\mathbf{W}^{\check{k}}(\osp(1|2r))$,
respectively.
Furthermore, in the first case, for any $\alpha\in P_+\cap Q$ and $\lambda\in\Lambda$ such that $(p\lambda_\bullet+\rho^\vee,{}^L\theta)\leq p$, $\mathcal{W}_{-\alpha+\lambda}$ and $H^0(G\times_BV_\lambda)$ are simple as $\mathcal{W}_0$- and $H^0(G\times_BV_0)$-modules, respectively.
In the second case, under similar conditions with the restriction $\lambda^\bullet=0$ and the assumption that the $+$-reduction is exact, the same simplicity theorem holds.
In the first case, the above simplicity of $\mathcal{W}_{-\alpha+\lambda}$ also leads to the simplicity and duality of the Arakawa--Frenkel modules \cite{ArF}
$\check{\mathbf{T}}^{m}_{p\lambda_\bullet,\alpha+\lambda^\bullet}
\simeq
\mathbf{T}^{1/p}_{\alpha+\lambda^\bullet,p\lambda_\bullet}$.
\end{theorem}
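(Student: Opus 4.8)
The plan is to read everything off from Theorem~\ref{main theorem of part 1}, applied to the shift system of Theorem~\ref{main theorem in Section 2}, feeding in the rank $1$ computations of \cite{AM,AM1,AM2} and the Kazhdan--Lusztig character formulas of \cite{KT,KT1}. First I would identify $\mathcal{W}_0$. Since $\lambda=0$ satisfies \eqref{strong condition}, Theorem~\ref{main theorem of part 1}\eqref{main theorem 1(2)} gives $H^{n>0}(G\times_BV_0)\simeq 0$; combining this vanishing with the Atiyah--Bott localization formula on $G/B$ produces the Weyl-type character formula \eqref{character formula in introduction}, whose trivial isotypic component computes $\ch_q\mathcal{W}_0$ as the Euler--Poincar\'e character of the Felder complex of long screenings on the degree $0$ Fock space. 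By definition $\mathcal{W}_0=H^0(G\times_BV_0)^G$ is the joint kernel of all screenings, hence is contained in the joint kernel of the long screenings, which by the Feigin--Frenkel free field realization is $\mathbf{W}^k(\g)$ at the level $k$ fixed by $p$. Since the same Euler--Poincar\'e character also computes $\ch_q\mathbf{W}^k(\g)$, this inclusion together with the equality of characters upgrades to a VOA isomorphism $\mathcal{W}_0\simeq\mathbf{W}^k(\g)$.

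Next I would invoke Feigin--Frenkel duality for $\mathbf{W}^k(\g)\simeq\mathbf{W}^{\check k}({}^L\g)$: the same joint kernel $\mathcal{W}_0$ is equally realized by the short screenings carrying the Langlands-dual parameter, which computes $\mathbf{W}^{\check k}({}^L\g)$. The super case $\osp(1|2r)$ would run identically, with the Felder complex replaced by its $\osp(1|2r)$ analogue and the rank $1$ inputs \cite{AM1,AM2} used in place of \cite{AM}.

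For simplicity I would first settle the vacuum. Serre duality applied to the $n=0$ case of Theorem~\ref{main theorem of part 1}\eqref{main theorem 1(2)} yields $H^0(G\times_BV_0)\simeq H^0(G\times_BV_0)^\ast$, so the multiplet $W$-(super)algebra is self-contragredient and therefore simple; taking $G$-invariants, $\mathcal{W}_0$ is simple as well. Quantum Galois theory \cite{DM,McR2} then forces each isotypic component $\mathcal{W}_{-\beta}$, $\beta\in P_+$, to be a simple $\mathcal{W}_0$-module. To extend simplicity over the range $(p\lambda_\bullet+\rho^\vee,{}^L\theta)\leq p$, I would compare the character of $\mathcal{W}_{-\alpha+\lambda}$ supplied by \eqref{character formula in introduction} with the Kazhdan--Lusztig character of its simple quotient, obtained from \cite{KT,KT1} through the exactness of the $+$-reduction $H^0_{\mathrm{DS},+}(\cdot)$ of \cite{Ar}: under the displayed dominance bound the two characters coincide, so $\mathcal{W}_{-\alpha+\lambda}$ is already simple, and hence so is $H^0(G\times_BV_\lambda)$. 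The second case would follow the same route under the restriction $\lambda^\bullet=0$.

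The main obstacle lies in this last step for $\osp(1|2r)$: there the exactness of $H^0_{\mathrm{DS},+}(\cdot)$ is not available in the literature and must be imposed as a hypothesis, so the super simplicity theorem remains conditional. A subsidiary difficulty, already present in the first case, is to check that the Euler--Poincar\'e character of the rank $1$ Felder complexes reproduces the correct (super) $W$-algebra characters beyond the simply-laced setting; this is exactly where the precise matching of $k$ with $p$, of $\check k$ with the Langlands dual, and the rank $1$ results of \cite{AM,AM1,AM2} are needed.
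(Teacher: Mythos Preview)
Your overall strategy matches the paper's, but two points need correction.

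First, a minor one: you have the inclusion backwards when identifying $\mathcal{W}_0$. In the paper's conventions $\mathcal{W}_0=\bigcap_i\ker f_i|_{V_0^{h=0}}$ \emph{is} the joint kernel of the long screenings on the zero-weight Fock space, and the Miura map supplies an injection $\mathbf{W}^k(\g)\hookrightarrow\mathcal{W}_0$ (see \eqref{embedding of W-algebra into W_0}), not $\mathcal{W}_0\hookrightarrow\mathbf{W}^k(\g)$. Since you then match characters anyway this does not spoil the conclusion, but the sentence ``$\mathcal{W}_0$ is the joint kernel of all screenings, hence is contained in the joint kernel of the long screenings'' is not how the argument runs.

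The substantive gap is in extending simplicity from $\lambda=0$ to general $\lambda$ satisfying the strong condition. You assert that ``under the displayed dominance bound the two characters coincide'', but this is precisely the point requiring proof, and a direct comparison is not available: the Kazhdan--Lusztig expansion of $\ch\check{\mathbf{L}}_{\check k}(\chi_{p(-\alpha+\lambda)})$ is an infinite sum over $\hat W$ with coefficients $Q_{y,y_{\alpha,\lambda^\bullet}}(1)$ one cannot evaluate, while the Weyl-type formula \eqref{Weyl type character formula for multiplicity W} is a finite alternating sum over $W$. The paper's mechanism is a bootstrap (Lemma~\ref{new simplicity theorem} together with Lemma~\ref{uniqueness of KL}): one first verifies that the strong condition forces the base point $y_{\alpha,\lambda^\bullet}\in\hat W$, and hence all the KL coefficients $a_{\beta,y}$, to be \emph{independent of $\lambda_\bullet$}; then the simplicity at $\lambda_0=0$ established via Serre duality and quantum Galois theory pins down the $W$-collapsed sums $\sum_{y'\sim y}a_{\beta,y'}$, and these are transported verbatim to $\lambda_1=\lambda$. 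Without this $\lambda_\bullet$-independence check the comparison has no content. Finally, passing from simplicity of each $\mathcal{W}_{-\alpha+\lambda}$ to simplicity of $H^0(G\times_BV_\lambda)$ is not automatic: one needs that each $\mathcal{W}_{-\alpha+\lambda}$ is CFT-type as a $\mathcal{W}_0$-module, which is exactly Lemma~\ref{lemma: general simplicity theorem for lambda is nonzero}.
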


The decomposition \eqref{the decomposition} with Theorem \ref{main theorem of part 2} is regarded as the Schur-Weyl type duality for the multiplet principal W-algebras. 
In the non-simply laced case, in light of \cite{McR2}, the tensor category of $\mathbf{W}^k(\mathfrak{g})$-modules is expected to exhibit a symmetry governed by the Langlands dual Lie algebra ${}^L\mathfrak{g}$.

Finally, let us discuss two directions to which Theorem \ref{main theorem of part 1} points.
Our geometric representation theory of FT construction has similar aspects to the \textit{modular representation theory}.
Indeed, in our special cases, the strong condition in Lemma \ref{lemm: strong condition for our cases}\eqref{lemm: strong condition for non-super case} also appears in the BWB theorem for ${}^LG$ \cite[II.5]{Jan}.
Ultimately, such a similarity should come from the \textit{log Kazhdan--Lusztig correspondence} (e.g. \cite{FGST,FT,NT,MY,GN,CLR}): an expected categorical equivalence between a multiplet $W$-algebra and a \textit{small quantum group}.
For example, in our case, $H^0(G\times_BV_{\sqrt{p}Q})$ is expected to correspond to the small quantum group $u_{\zeta}({}^L\g)$ at $\zeta=\frac{\pi i}{p}$.\footnote{On the other hand, our super case is expected to correspond to some small quantum group at $\zeta=\tfrac{2\pi i}{2m-1}$ (see \cite{AM1}).}
Although the case $\mathfrak{g}=\mathfrak{sl}_2$
 was already treated in \cite{GN}, it seems necessary to develop both our geometric methods and W-algebraic approaches for higher rank generalizations (see Remark \ref{Section4-Remark4.4}).

On the other hand, under the correspondence of the BWB-type theorems above, Theorem \ref{main theorem of part 1}\eqref{main theorem 1(1)} clearly does not hold in the modular representation theory \cite{Jan}.
In this regard, the first author conceived the following idea a few years ago: 
Let $\vec{\lambda}=(\lambda_1,\dots,\lambda_N)\in\Lambda_1\times\cdots\times\Lambda_N$ be a parameter sequence and $\mathbb{V}_{\vec{\lambda}}$ be a $G$-module.
We consider the situation such that for each $0\leq n\leq N-1$, an appropriate quotient 
\begin{align*}
\underbrace{\tilde{H}^0(G\times_B\tilde{H}^0(G\times_B\cdots\tilde{H}^0(G\times_B}_{\text{$n$-times}}\tilde{\mathbb{V}}_{\vec{\lambda}})\cdots))  
\end{align*}
of
\begin{align*} 
\underbrace{{H}^0(G\times_B\tilde{H}^0(G\times_B\cdots\tilde{H}^0(G\times_B}_{\text{$n$-times}}\tilde{\mathbb{V}}_{\vec{\lambda}})\cdots))
\end{align*} 
is a shift system with respect to $\lambda_{n+1}\in\Lambda_{n+1}$ (note that when $n=0$, the quotient $\tilde{\mathbb{V}}_{\vec{\lambda}}$ of $\mathbb{V}_{\vec{\lambda}}$ is a shift system with respect to $\lambda_1\in\Lambda_{1}$. In the case of the triplet Virasoro algebra at level $p\in\Z_{\geq 2}$, $\mathbb{V}_\lambda$ is a projective cover of its irreducible module and $\tilde{\mathbb{V}}_{\lambda}$ is an irreducible lattice VOA-module, respectively).
The reason we must consider the quotient $\tilde{H}^0(G\times_B\cdots)$ is that the functor $H^0(G\times_B-)$ does not change a $G$-module; in other words, to apply the $n$-th FT construction, we must break the $G$-module structure of the $(n-1)$-th FT construction to make a shift system ($B$-module structure) again.
Then we finally obtain the \textit{nested FT constructions}
\begin{align*}
\underbrace{H^0(G\times_B\tilde{H}^0(G\times_B\cdots\tilde{H}^0(G\times_B}_{\text{$N$-times}}\tilde{\mathbb{V}}_{\vec{\lambda}})\cdots)).
\end{align*}
Because a shift system appears at each stage of nesting, Theorem \ref{main theorem of part 1} can be applied repeatedly. 
In particular, for $\mathfrak{g}=\mathfrak{sl}_2$, the $q$-series obtained by
iteratively applying \eqref{character formula in introduction} along the nested
FT construction surprisingly almost coincides with the
$\hat{Z}$-invariant \cite{GPPV} of the corresponding negative definite Seifert
$3$-manifold, after restricting to the $h=0$ sector (conjecturally singlet-type VOA).
This suggests that such nested FT constructions yield rich examples of LCFT whose representation theories are to some extent reducible and controllable by the geometric representation theory of FT construction, providing a blueprint for the ``dictionary" between LCFTs and negative definite plumbed $3$-manifolds expected in \cite{CCFGH,CCKPS,CFGHP}, etc.

In \cite{Sug3, Sug4},
for $\mathfrak{g}=\mathfrak{sl}_2$ and any negative definite plumbed 3-manifold $Y$, the first author 
realized the nested FT constructions on an abstract abelian category $\mathcal{C}^Y$.
Here, $\mathcal{C}^Y$ is intended to be a module category of the conjectural ``Virasoro VOA'', whose existence is predicted based on the ``dictionary'' mentioned above.
Indeed, the $\hat{Z}$-invariants of $Y$ are reconstructed in the Grothendieck group $[\mathcal{C}^Y]$ of $\mathcal{C}^Y$ up to formal
substitution of certain lattice theta functions $\Theta^Y_{\vec{\lambda}}(q)$ into the representatives $[V_{\vec{\lambda}}]\in[\mathcal{C}^Y]$ of the corresponding indecomposable objects $V_{\vec{\lambda}}\in\mathcal{C}^Y$, which are intended to be the irreducible modules of the conjectural ``lattice VOA'' whose characters are $\Theta^Y_{\vec{\lambda}}(q)$. 
The Lie algebraic framework developed here --- formulating the FT construction
without presupposing VOA structure --- makes it possible to study such
conjectural 
VOAs before their full VOA-theoretic description is available.

\noindent
\textbf{Acknowledgments:}
S.S. thanks Naoki Genra and Hiroshi Yamauchi for useful comments on naming.
H.L. thanks Antun Milas for motivating questions, and to Drazen Adamovic and Naoki Genra for useful discussions.
The authors thank Tomoyuki Arakawa for useful feedback. 
The authors also thank anonymous referee for useful suggestions.
The majority of this work was done when the authors were postdocs in YMSC, Tsinghua University.

\part{Shift system}\label{part 1}
In Part \ref{part 1}, we describe the discussion of \cite{FT,Sug,Sug2} in an axiomatic (in other words, purely Lie Algebraic) manner. 
We also generalize the discussion in these papers to include non-simply laced cases.
\subsection{Preliminaries on Lie Algebras}\label{section: preliminary}
We present the basic notations and facts used throughout this paper.
Let $\g=\mathfrak{n}_{-}\oplus\h\oplus\mathfrak{n}_{+}$ be a finite dimensional simple Lie algebra of rank $r$ and lacing number $r^\vee$, and its triangular decomposition.
Let $\Delta=\Delta_+\sqcup\Delta_{-}$ be the root system of $\g$ (resp. positive roots and negative roots).
For the Weyl group $W$ of $\g$, we consider the canonically normalized $W$-invariant bilinear form $(\cdot,\cdot)=(\cdot,\cdot)_Q$, namely, for a long root $\alpha\in\Delta^l$, the length $|\alpha|^2$ is always $2$ (denote $\Delta^s$ the short roots).
For a root $\alpha\in\Delta$, the coroot $\alpha^\vee$ is defined by $\tfrac{2}{|\alpha|^2}\alpha$.
For $1\leq i\leq r$, $\alpha_i$, $\alpha_i^\vee$, $\alpha_i^\ast$, $\varpi_i$ and $\sigma_i\in W$ are the simple root, simple coroot, fundamental coweight (dual of $\alpha_i$), fundamental weight (dual of $\alpha_i^\vee$), and the simple reflection $\sigma_i(\mu)=\mu-(\mu,\alpha_i^\vee)\alpha_i$ corresponding to $\alpha_i$, respectively.
In this paper, the labeling of the Dynkin diagrams of $\g$ is given by
\begin{align*}
&
\begin{tikzpicture}
\dynkin[mark=o]{A}{}
\end{tikzpicture}
\ \ 
\begin{tikzpicture}
\dynkin[mark=o]{D}{}
\end{tikzpicture}
\ \ 
\begin{tikzpicture}
\dynkin[mark=o]{E}{6} 
\dynkinLabelRoot{1}{1}
\dynkinLabelRoot{2}{4} 
\dynkinLabelRoot{3}{2}
\dynkinLabelRoot{4}{3}
\dynkinLabelRoot{5}{5}
\dynkinLabelRoot{6}{6}
\end{tikzpicture}
\ \ 
\begin{tikzpicture}
\dynkin[mark=o]{E}{7} 
\dynkinLabelRoot{1}{1}
\dynkinLabelRoot{2}{4} 
\dynkinLabelRoot{3}{2}
\dynkinLabelRoot{4}{3}
\dynkinLabelRoot{5}{5}
\dynkinLabelRoot{6}{6}
\dynkinLabelRoot{7}{7}
\end{tikzpicture}
\ \ 
\begin{tikzpicture}
\dynkin[mark=o]{E}{8} 
\dynkinLabelRoot{1}{1}
\dynkinLabelRoot{2}{4} 
\dynkinLabelRoot{3}{2}
\dynkinLabelRoot{4}{3}
\dynkinLabelRoot{5}{5}
\dynkinLabelRoot{6}{6}
\dynkinLabelRoot{7}{7}
\dynkinLabelRoot{8}{8}
\end{tikzpicture}
\\
&\begin{tikzpicture}
\dynkin[mark=o]{B}{}
\end{tikzpicture}
\ \ 
\begin{tikzpicture}
\dynkin[mark=o]{C}{}
\end{tikzpicture}
\ \ 
\begin{tikzpicture}
\dynkin[mark=o,label]{F}{4}
\end{tikzpicture}
\ \ 
\begin{tikzpicture}
\dynkin[mark=o]{G}{2}
\dynkinLabelRoot{1}{1}
\dynkinLabelRoot{2}{2}
\end{tikzpicture}
\end{align*}
respectively.
We sometimes identify the set of simple roots $\Pi=\{\alpha_1,\dots,\alpha_r\}$ with
the set $I=\{1,\dots,r\}$.
The fundamental weight $\varpi_i$ is the dual vector to the coroot.
Denote $Q=\sum_{i\in I}\Z\alpha_i$, $Q^\vee=\sum_{i\in I}\Z\alpha_i^\vee$, $Q^\ast=\check{P}=\sum_{i\in I}\Z\alpha_i^\ast$ and $P=\sum_{i\in I}\Z\varpi_i$ the root lattice, coroot lattice, coweight lattice and weight lattice, respectively.
The set of dominant integral weights $P_+$ and dominant integral coweights $\check{P}_+=Q^\ast_+$ are given by $P_+=\sum_{i\in I}\Z_{\geq 0}\varpi_i$,
$\check{P}=\sum_{i\in I}\Z_{\geq 0}\alpha_i^\ast$, respectively.
We use the letter $P_{\mathrm{min}}\subseteq P_+$ for the family of minuscule weights.
Let $G$ be the corresponding simply-connected simple algebraic group with $B$ the Borel subgroup. 
Given a subset $J\subset I$, we denote by $P_J$ the corresponding parabolic subgroup\footnote{Throughout the paper, we use the negative parabolic/Borel subalgebras.} and $j\in J$, by $SL_2^j$ the corresponding subgroup isomorphic to $SL_2$. 
In the case $J=\{j\}$, we denote $P_J$
as $P_j$.
Denote $\rho=\sum_{i\in I}\varpi_i$ and $\rho^\vee=\sum_{i\in I}\alpha_i^\ast$ the Weyl vector and Weyl covector, $\theta$ and $\theta_s$ the highest (long) root and highest short root,
$h$ and $h^\vee$ the Coxeter and dual Coxeter number, respectively.
For $\sigma\in W$, $l(\sigma)$ and $w_0$ denote the length of $\sigma$ and the longest element in $W$, respectively.
For a minimal expression $\sigma=\sigma_{i_n}\cdots\sigma_{i_1}$ of $\sigma\in W$, we sometimes use $\sigma_{i_0}=\operatorname{id}$ for convenience.
For $\beta\in P_+$, denote $L_\beta$ the finite-dimensional irreducible $\g$-module with the highest weight $\beta$.
We sometimes use the letters $y_\beta$ and $y'_{\beta}$ for a highest-weight vector and lowest-weight vector of $L_\beta$, respectively.
The Langlands dual ${}^L\g$ is defined by replacing the root system by the coroot system.
Namely,
\begin{align*}
&(Q_{{}^L\g},(\cdot,\cdot)_{{}^LQ})
=
(Q^\vee,\tfrac{1}{r^\vee}(\cdot,\cdot)_Q),\\
&\alpha_{i,{}^L\g}=\alpha_i^\vee,\ \ 
\alpha_{i,{}^L\g}^\vee=r^\vee\alpha_{i},\ \ 
\alpha_{i,{}^L\g}^\ast=r^\vee\varpi_i,\ \ 
\varpi_{i,{}^L\g}=\alpha_i^\ast
\end{align*}
(in particular, 
$\rho_{{}^L\g}=\rho^\vee$ and 
$\rho_{{}^L\g}^\vee=r^\vee\rho$).
If we want to emphasize that a certain object $X$ (e.g., $\g$, $Q$,...) is derived from ${}^L\g$ rather than $\g$, we often use symbols such as ${}^LX$, $X_{{}^L\g}$, $\check{X}$, etc.
By abuse of notation, for $\theta=\sum_{i\in I}a_i\alpha_i$, denote  ${}^L\theta=\sum_{i\in I}a_i\alpha_{r+1-i}^\vee\in\h^\ast$ (i.e., we regard ${}^L\theta_{\g}=\theta_{{}^L\g}$ as an element in $\h^\ast$).
For more detailed data in each case, see e.g. \cite[p.91-92]{Kac2} (but note that the labeling of Dynkin diagrams is different).

For a weight $B$-module $M$ and $\beta\in\h^\ast$, denote $M^{h=\beta}$ the weight space with Cartan weight $\beta$.
For $\mu\in\h^\ast$, let $\C_\mu$ be the one-dimensional $B$-module such that $\mathfrak{n}_{-}$-action is trivial. 
For a $B$-module $V$ and $\mu\in\h^\ast$, we write $V(\mu)$ for the $B$-module $V\otimes_\C\C_\mu$.
We use the same notation for a vector bundle and its sheaf of sections. 
For a sheaf $\mathcal{F}$ over a topological space $X$ and an open subset $U$ of $X$, $\mathcal{F}(U)$ denotes the space of sections of $\mathcal{F}(U)$ on $U$. 
For an algebraic variety $X$, let $\mathcal{O}_X$ be the structure sheaf of $X$. 
For $\mu\in\h^\ast$, we write $\mathcal{O}(\mu)$ for the line bundle (or invertible sheaf) $G\times_B\C_\mu$ over the flag variety $G/B$. 
In particular, $\mathcal{O}(0)$ is $\mathcal{O}_{G/B}$. 
For an $\mathcal{O}_{G/B}$-module $\mathcal{F}$, we use the letter $\mathcal{F}(\mu)$ for $\mathcal{F}\otimes_{\mathcal{O}_{G/B}}\mathcal{O}(\mu)$. 
In particular, for a $B$-module $V$ and a homogeneous vector bundle $G\times_BV$, we have $(G\times_BV)(\mu)=G\times_BV(\mu)$.
For $n\in\mathbb{Z}$, the $n$-th sheaf cohomology of $\mathcal{F}$ is denoted by $H^n(\mathcal{F})$.

For later convenience, we will list several facts. 
\begin{lemma}\label{Sug 114 115 lemma 4.5}
\cite[(114), (115), Lemma 4.5]{Sug}
For a $P_i$-module $M$ and $\mu\in P$, we have
\begin{align*}
&H^n(P_i\times_BM(\mu))\simeq H^n(P_i\times_B\C_\mu)\otimes M,\\
&\dim_\C H^n(P_i\times_B\C_\mu)
=
\begin{cases}
(\mu,\alpha_i^\vee)+1&(n=0,\ (\mu,\alpha_i^\vee)\geq 0),\\
-(\mu,\alpha_i^\vee)-1&(n=1,\ (\mu,\alpha_i^\vee)<0),\\
0&(\text{otherwise}).
\end{cases}
\end{align*}
In particular, if $(\mu+\rho,\alpha_i^\vee)\geq 0$, then $H^0(P_i\times_B\C_\mu)\simeq H^1(P_i\times_B\C_{\sigma_i(\mu+\rho)-\rho})$ as $P_i$-submodules.
\end{lemma}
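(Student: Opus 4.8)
The plan is to reduce everything to the rank-one geometry of $P_i/B\cong\mathbb{P}^1$. First I would establish the tensor identity. The decisive point is that $M$ is a $P_i$-module, not merely a $B$-module, so the assignment $[g,m]\mapsto(gB,g\cdot m)$ is well defined and furnishes a $P_i$-equivariant trivialization $P_i\times_B M\simeq\mathcal{O}_{P_i/B}\otimes_\C M$, with $P_i$ acting diagonally (left translation on $P_i/B$ and the given action on $M$). Using the general identity $P_i\times_B(M\otimes\C_\mu)\simeq(P_i\times_B M)\otimes_{\mathcal{O}}(P_i\times_B\C_\mu)$ together with this trivialization, I obtain $P_i\times_B M(\mu)\simeq\mathcal{O}(\mu)\otimes_\C M$ as $P_i$-equivariant sheaves. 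Since $M$ is a finite-dimensional vector space, cohomology commutes with $-\otimes_\C M$, yielding $H^n(P_i\times_B M(\mu))\simeq H^n(P_i\times_B\C_\mu)\otimes M$ as $P_i$-modules; this is the first displayed isomorphism.

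Next I would compute $H^n(P_i\times_B\C_\mu)$. As $B\subset P_i$ is the (negative) Borel and $P_i$ is the minimal parabolic attached to $\alpha_i$, we have $P_i/B\cong\mathbb{P}^1$, realized through $SL_2^i$. The unipotent radical of $P_i$ lies in $B$ and acts trivially, so the whole computation is governed by the $SL_2^i$-weight $d=(\mu,\alpha_i^\vee)$: the line bundle $\mathcal{O}(\mu)$ restricts to $\mathcal{O}(d)$ on $\mathbb{P}^1$. The standard cohomology of line bundles on $\mathbb{P}^1$ then gives $\dim H^0=d+1$ for $d\geq 0$, $\dim H^1=-d-1$ for $d\leq -2$, and vanishing otherwise (in particular both groups vanish at $d=-1$), which is exactly the stated case division.

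Finally, for the ``in particular'' statement I would set $\mu'=\sigma_i(\mu+\rho)-\rho$ and compute, using $(\rho,\alpha_i^\vee)=1$ and $\sigma_i(\nu)=\nu-(\nu,\alpha_i^\vee)\alpha_i$, that $\mu'=\mu-(d+1)\alpha_i$ and hence $(\mu',\alpha_i^\vee)=-d-2$. Under the hypothesis $(\mu+\rho,\alpha_i^\vee)\geq 0$, i.e.\ $d\geq -1$, the weight $\mu$ lies in the $H^0$-range while $\mu'$ lies in the $H^1$-range, and both groups carry the same $(d+1)$-dimensional $SL_2^i$-representation (they vanish simultaneously at $d=-1$). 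The isomorphism as $P_i$-modules is then the rank-one Borel-Weil-Bott statement, which I would obtain either directly from $SL_2$-representation theory or from Serre duality on $\mathbb{P}^1$ together with the dot-action reflection $\sigma_i\cdot\mu=\mu'$.

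I expect the main obstacle to be bookkeeping rather than conceptual: ensuring that each identification is $P_i$-equivariant, so that the conclusions hold as $P_i$-modules and not merely as vector spaces, and pinning down the sign convention so that $\mathcal{O}(\mu)$ corresponds to $\mathcal{O}\bigl((\mu,\alpha_i^\vee)\bigr)$ given the paper's use of negative Borel and parabolic subalgebras. Once the conventions are fixed, the rank-one reduction makes both the dimension formula and the Borel-Weil-Bott reflection immediate.
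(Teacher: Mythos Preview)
The paper does not supply its own proof of this lemma; it is stated as a fact imported from \cite[(114),(115), Lemma 4.5]{Sug}. Your argument is the standard one and is correct: trivialize $P_i\times_B M$ using that $M$ is already a $P_i$-module, reduce to line bundles on $P_i/B\cong\mathbb{P}^1$, and read off the cohomology and the Borel--Weil--Bott reflection for $SL_2$. This is exactly the kind of proof one expects in the cited reference, so there is no meaningful divergence to discuss.

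One small correction: you write ``since $M$ is a finite-dimensional vector space, cohomology commutes with $-\otimes_\C M$''. In the applications later in the paper (e.g.\ Lemma~\ref{Sug-4.2} and Remark~\ref{rmk easy facts on FT data}\eqref{maximality of W_i}) the module $M$ is typically infinite-dimensional (it is $V_\lambda$ or a piece thereof). The tensor identity still holds because $P_i/B\cong\mathbb{P}^1$ is Noetherian, so sheaf cohomology commutes with arbitrary direct sums; you should phrase the justification that way rather than invoking finite-dimensionality.
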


\begin{lemma}\label{Sug-4.2}
If $M$ is not only a $B$-module but also an $SL_2^j$-module, then $M$ has a structure of $P_j$-module. 
\end{lemma}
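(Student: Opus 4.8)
The plan is to realize the minimal parabolic $P_j$ as the product $B\cdot SL_2^j$ and to glue the two given actions along their overlap $B_j:=B\cap SL_2^j$, the Borel subgroup of $SL_2^j$. First I would record the structural facts about $P_j$: at the group level $P_j=B\cdot SL_2^j$ with $B\cap SL_2^j=B_j$, together with the Levi decomposition $P_j=U_j\rtimes L_j$ and $L_j=Z_j\cdot SL_2^j$, where $U_j$ is the unipotent radical, $L_j$ the Levi factor, and $Z_j=Z(L_j)^\circ=(\ker\alpha_j)^\circ$ the connected central torus; at the Lie algebra level this is mirrored by $\mathfrak{p}_j=\bo\oplus\g_{-\alpha_j}$. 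The conceptual point is that the only generator of $\mathfrak{p}_j$ not already in $\bo$ is the negative simple root vector $f_j\in\g_{-\alpha_j}$, and an integrable action of $f_j$ compatible with $\bo$ is exactly what an $SL_2^j$-module structure agreeing with the $B$-action on $B_j$ supplies. Thus a $P_j$-module extending the $B$-module structure is precisely the datum we are given.

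Next I would assemble the action in two stages following the Levi decomposition. On $L_j=Z_j\cdot SL_2^j$ I set $\rho_{L_j}(zs):=\rho_B(z)\rho_S(s)$ for $z\in Z_j\subset T\subset B$ and $s\in SL_2^j$, writing $\rho_B,\rho_S$ for the two given actions. Well-definedness follows from $Z_j\cap SL_2^j\subseteq T_j\subseteq B_j$ together with the agreement of $\rho_B$ and $\rho_S$ on $B_j$, and the homomorphism property reduces to the commutation of $\rho_B(Z_j)$ with $\rho_S(SL_2^j)$. Here I would use that $M$ is a weight module and that $\alpha_j$ restricts trivially to $Z_j=(\ker\alpha_j)^\circ$: since $SL_2^j$ shifts Cartan weights only by $\Z\alpha_j$, it preserves every $Z_j$-weight space and hence commutes with the diagonal $Z_j$-action. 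With the $L_j$-structure built, I extend to $P_j=U_j\rtimes L_j$ by letting $U_j\subset B$ act through $\rho_B$; the semidirect-product compatibility splits into the relation for $Z_j$, already encoded in $\rho_B$ being a homomorphism, and the relation $\rho_S(s)\rho_B(u)\rho_S(s)^{-1}=\rho_B(sus^{-1})$ for $s\in SL_2^j$ and $u\in U_j$.

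The main obstacle is this last conjugation-compatibility on the unipotent radical, equivalently the cross-bracket relations $[\rho_S(f_j),\rho_B(e_\beta)]=\rho_B([f_j,e_\beta])$ for $\beta\in\Delta_+\setminus\{\alpha_j\}$, since they mix the two a priori independent actions; already the base cases $[\rho_S(f_j),\rho_B(e_{\alpha_i})]=0$ for $i\neq j$ are nontrivial commutations rather than formal consequences of the separate module axioms. I would handle these by noting that for such $\beta$ the bracket $[f_j,e_\beta]$ lands back in $\bo$ (as $\beta-\alpha_j$ is either a positive root or not a root), so both sides are governed by the adjoint action of $SL_2^j$ on $\mathfrak{u}_j$ and the shared $B_j$-action; propagating the agreement from the generators $\alpha_j^\vee,e_{\alpha_j}$ of $\bo\cap\sll_2^j$ via the Jacobi identity then reduces the verification to finitely many Serre-type relations, which is the crux of the argument in any concrete realization. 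Finally, integrability of the resulting $\mathfrak{p}_j$-action is automatic, since $M$ is already integrable for both $B$ and $SL_2^j$, so the glued infinitesimal action exponentiates to an algebraic action of $P_j$, as desired.
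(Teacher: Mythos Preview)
Your approach is genuinely different from the paper's, and it has a gap at precisely the point you yourself flag as ``the main obstacle''. The paper does \emph{not} glue the two actions along $B_j$ or verify any cross-brackets. Instead, it uses a one-line geometric trick: the inclusion $SL_2^j\hookrightarrow P_j$ induces an isomorphism of vector bundles $SL_2^j\times_{B_j}M\simeq P_j\times_B M$ over $P_j/B\simeq SL_2^j/B_j\simeq\mathbf{P}^1$; since $M$ is already an $SL_2^j$-module, the left-hand bundle is trivial, so $H^0(SL_2^j\times_{B_j}M)\simeq M$ (this is Lemma~\ref{Sug 114 115 lemma 4.5} with $\mu=0$). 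But $H^0(P_j\times_B M)$ carries a $P_j$-action by construction, and transporting it along the isomorphism gives the desired $P_j$-structure on $M$, restricting to both the original $B$- and $SL_2^j$-actions via the evaluation map. No compatibility relations need to be checked by hand.

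Your route, by contrast, requires proving $[\rho_S(e_j),\rho_B(f_i)]=0$ for $i\neq j$ (in the paper's negative-Borel convention) directly from the hypotheses. You assert that Jacobi ``reduces the verification to finitely many Serre-type relations, which is the crux of the argument in any concrete realization'', but this is not a proof: the Serre relations you have access to are those among the $f_k$'s (from the $B$-action) and the single $\sll_2^j$-triple relations; none of these formally force the mixed commutator $[\rho_S(e_j),\rho_B(f_i)]$ to vanish. Writing $T_k=[\rho_S(e_j),\rho_B(\ad(f_j)^kf_i)]$ and using $[f_j,e_j]=-h_j$ gives a recursion $[\rho_B(f_j),T_k]=(2k+c_{ji})\rho_B(\ad(f_j)^kf_i)+T_{k+1}$ that terminates at $T_{1-c_{ji}}=0$ by Serre, but this does not let you solve backwards for $T_0$. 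The relation $[\rho_S(e_j),\rho_B(f_i)]=0$ is in fact \emph{true} (it follows a posteriori from the paper's construction), but your sketch does not establish it, and appealing to ``any concrete realization'' concedes that the abstract lemma has not been proved. The virtue of the paper's argument is exactly that the $P_j$-action on $H^0(P_j\times_B M)$ is free: it comes from the left translation on $P_j$ and needs no verification against the fibre.
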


\proof
The isomorphism of varieties $P_j/B\simeq SL_2^j/B^j(\simeq \mathbf{P}^1)$ actually extends to an isomorphism of equivariant bundles 
$P_j\times_B M\simeq SL_2^j\times_{B^j}M$
over $\mathbf{P}^1$ and thus induces an isomorphism of the global sections
\begin{align*}
H^0(P_j\times_B M)\simeq H^0(SL_2^j\times_{B^j}M)
\overset{\text{Lemma \ref{Sug 114 115 lemma 4.5}}}{\simeq} M.
\end{align*}
Since the left-hand side implies that it is naturally a $P_j$-module, we obtain the assertion.
\endproof

\begin{lemma}\label{Sug-4.4.1}
Let $J\subset I$ and $M_j$ ($j\in J$) be  $P_j$-submodules of a 
$B$-module $M.$
If the intersection 
$$\bigcap_{j\in J}M_j\subset M$$
is closed under $e_j$ ($j\in J$), then it is a $P_J$-submodule of $M$.
\end{lemma}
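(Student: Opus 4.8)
The idea is to verify that $N:=\bigcap_{j\in J}M_j$ is stable under a convenient set of generators of the group $P_J$, and then invoke the group-theoretic structure of the parabolic. With the negative convention of the paper, $P_J$ is generated by the Borel subgroup $B$ together with the rank-one subgroups $SL_2^j$ ($j\in J$); indeed $P_J=\langle B, SL_2^j : j\in J\rangle$, and, in the spirit of Lemma \ref{Sug-4.2}, stability under $B$ and under each $SL_2^j$ is exactly what upgrades a $B$-submodule to a $P_J$-submodule. So the plan is to show (i) that $N$ is a $B$-submodule of $M$, and (ii) that $N$ is stable under each $SL_2^j$.

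For (i) I would observe that each $M_j$ is a $P_j$-submodule, hence a fortiori a $B$-submodule of $M$, and that an intersection of $B$-submodules is again a $B$-submodule; thus $N$ is $B$-stable, and in particular closed under $f_j$ and $\alpha_j^\vee$ for every $j\in J$. For (ii) I would fix $j\in J$: by (i) together with the hypothesis, $N$ is closed under the three generators $e_j,f_j,\alpha_j^\vee$ of $\mathfrak{sl}_2^j$. Since $M$ is a $P_J$-module its restriction to $SL_2^j$ is a rational representation, so $e_j$ acts locally nilpotently on $M$; hence for $v\in N$ the sum $\exp(te_j)v=\sum_{k\ge 0}\tfrac{t^k}{k!}e_j^kv$ is finite and lies in $N$, as $N$ is $e_j$-stable and linear. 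Together with the $B$-stability (which already supplies invariance under the torus and under the opposite unipotent one-parameter subgroup $\exp(\C f_j)$ of $SL_2^j$), this shows that $N$ is an $SL_2^j$-submodule of $M$.

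Combining (i) and (ii) for all $j\in J$, the subspace $N$ is invariant under the generating subgroups $B$ and $SL_2^j$ of $P_J$, hence under all of $P_J$; that is, $N$ is a $P_J$-submodule of $M$, as claimed. The only step that is not purely formal bookkeeping is the exponentiation in (ii): the passage from closure under the Lie-algebra operator $e_j$ to invariance under the unipotent subgroup $\exp(\C e_j)$. This is precisely where one uses that $e_j$ is locally nilpotent on $M$, which is built into the assumption that $M$ carries a genuine (integrable) $P_J$-action rather than only a $\mathfrak{p}_J$-action; everything else reduces to the standard fact that $P_J$ is generated by $B$ and the $SL_2^j$ with $j\in J$.
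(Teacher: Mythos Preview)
Your proof is correct for the lemma as stated, and more direct than the paper's. You use that $P_J$ is generated by $B$ together with the $SL_2^j$ ($j\in J$), observe that $N=\bigcap_{j\in J}M_j$ is a $B$-submodule of the $P_J$-module $M$, and exponentiate the (locally nilpotent) $e_j$-action to get $SL_2^j$-invariance; since all of these operators are restrictions of the ambient $P_J$-action on $M$, they automatically assemble into the $P_J$-action on $N$.

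The paper takes a genuinely different route: it verifies by hand that the Serre relations $\ad_{e_i}^{1-c_{ij}}(e_j)=0$ hold on $N$, via a weight computation in $U(\mathfrak{sl}_2^i)$ applied to $SL_2^i$-highest weight vectors of $N$. At first glance this looks redundant---those relations already hold as operator identities on $M$---but notice that the paper's argument never actually uses that $M$ carries a $P_J$-action; it only uses that each $M_j$ is a $P_j$-module sitting inside a common $B$-module. In that generality the operators $e_i$ arise from independent $P_i$-structures, and their compatibility (the Serre relations) is genuinely in question. This stronger form is exactly what is invoked in Lemma~\ref{Sug-4.3}, where the ambient $V_\lambda$ is only a staggered $B$-module and the $W_{j,\lambda}$ carry separate $P_j$-structures with no common $P_J$-umbrella. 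So your argument settles the lemma as literally written, while the paper's longer computation buys the version that is actually used downstream.
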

\proof
It suffices to show $\ad_{e_i}^{1-c_{ij}}(e_j)=0$ for $i\neq j$ on $\bigcap_{j\in J}M_j$.
Since $[e_i,f_j]=0$, 
we have 
\begin{align*}
[\ad_{e_i}^{1-c_{ij}}(e_j),f_i]=\sum_{m=0}^{-c_{ij}}\ad_{e_i}^{-c_{ij}-m}\ad_{h_i}\ad_{e_i}^m(e_j)=\sum_{m=0}^{-c_{ij}}h_i(m \alpha_i-\alpha_j)\ad_{e_i}^{-c_{ij}}(e_j)=0
\end{align*}
Then, it suffices to show $\ad_{e_i}^{1-c_{ij}}(e_j)y^i_\alpha=0$ for any highest weight vector $y^i_\alpha$ of $\bigcap_{j\in J}M_j$ with respect to the $SL_2^{i}$-action (where $\alpha\in P$ such that $(\alpha,\alpha_i^\vee)\geq 0$).
Since $h_i((1-c_{ij})\alpha_i+\alpha_j+\alpha)=2-c_{ij}+h_i(\alpha)$, it suffices to show $f_i^{2-c_{ij}+h_i(\alpha)}\ad_{e_i}^{1-c_{ij}}(e_j)y^i_\alpha=0$.
Note that $f_i^{2-c_{ij}+h_i(\alpha)}e_i^{1-c_{ij}}=Xf_i^{h_i(\alpha)+1}$ for some $X\in U(\sll_2^i)$ by weight consideration. Then
\begin{align*}
f_i^{2-c_{ij}+h_i(\alpha)}\ad_{e_i}^{1-c_{ij}}(e_j)y^i_\alpha
=f_i^{2-c_{ij}+h_i(\alpha)}e_i^{1-c_{ij}}e_jy^i_\alpha
=Xf_i^{s+1}e_jy^i_\alpha
=X e_j f_i^{s+1}y^i_\alpha=0.
\end{align*}
This completes the proof.
\endproof

\subsection{Shift system}
In this subsection, we introduce a new concept, named \textit{shift system},
which is the main tool in the present paper, and gives its basic examples and properties.
The name ``shift system'' was chosen because, as seen in Theorem \ref{main theorem of part 1}, the shift $\sigma\uparrow\lambda$ plays a central role in our representation theory.
\begin{definition}\label{Def FT data}
A \textit{shift system} refers to a triple
\begin{align*}
(\Lambda,\uparrow,\{V_\lambda\}_{\lambda\in\Lambda})    
\end{align*}
that satisfies the following conditions:
\begin{enumerate}
\item\label{FT data 1}
$\Lambda$ is a $W$-module.
Denote $\ast\colon W\times\Lambda\rightarrow\Lambda$ the $W$-action on $\Lambda$.
\item\label{FT data 2}
$\uparrow\colon W\times\Lambda\rightarrow P$ is a map (\textit{shift map}) such that
\begin{enumerate}
\item\label{associativity of carry-over}
$\sigma_i\sigma\uparrow\lambda=\sigma_i(\sigma\uparrow\lambda)+\sigma_i\uparrow(\sigma\ast\lambda)$.
\item\label{length and carry-over}
If $l(\sigma_i\sigma)=l(\sigma)+1$, then $(\sigma\uparrow\lambda,\alpha_i^\vee)\geq 0$.
\item\label{-1 property of carry-over}
If $\lambda\not\in\Lambda^{\sigma_i}$, then $(\sigma_i\uparrow\lambda,\alpha_i^\vee)=-1$.
If $\lambda\in\Lambda^{\sigma_i}$, then $\sigma_i\uparrow\lambda=-\alpha_i$.
\end{enumerate}
Here, for $\sigma\in W$,
$\Lambda^\sigma:=\{\lambda\in\Lambda\ |\ \sigma\ast\lambda=\lambda\}$.
\item\label{FT data 3}
$V_\lambda$ is a weight $B$-module such that 
\begin{enumerate}
\item\label{conformal grading of FT data}
$V_\lambda=\bigoplus_{\Delta}V_{\lambda,\Delta}$ and each $V_{\lambda,\Delta}$ is a finite-dimensional weight $B$-submodule.
\item\label{Felder complex of FT data}
For any $i\in I$ and $\lambda\not\in\Lambda^{\sigma_i}$, there exists $P_i$-submodules $W_{i,\lambda}\subseteq V_{\lambda}$, $W_{i,\sigma_i\ast\lambda}\subseteq V_{\sigma_i\ast\lambda}$ and a $B$-module homomorphism 
$Q_{i,\lambda}\colon V_\lambda\rightarrow V_{\sigma_i\ast\lambda}(\cryov{\sigma_i}{\lambda})$ such that we have the short exact sequence 
\begin{align*}
0\rightarrow W_{i,\lambda}
\rightarrow V_{\lambda}
\xrightarrow{Q_{i,\lambda}} W_{i,\sigma_i\ast\lambda}(\sigma_i\uparrow\lambda)
\rightarrow 0
\end{align*}
of $B$-modules.
If $\lambda\in\Lambda^{\sigma_i}$, then $V_\lambda$ has the $P_i$-module structure (we sometimes write $W_{i,\lambda}=V_\lambda$).
\end{enumerate}
\end{enumerate}
Let us introduce some terms for convenience.
The action of $f_i\in\mathfrak{n}_{-}$ on $V_\lambda$ and the $B$-module homomorphism $Q_{i,\lambda}$ above are referred to as a \textit{long screening operator} and a \textit{short screening operator}, respectively.
If some object $X$ is isomorphic (resp. conjectured to be isomorphic) to $H^0(P_J\times_BV_\lambda)$ for some $V_\lambda$ and a parabolic subgroup $P_J\supseteq B$, we call $H^0(P_J\times_BV_\lambda)$ the \textit{Feigin--Tipunin construction} of $X$ (resp. FT conjecture on $X$).
\end{definition}
As mentioned in Section \ref{section: introduction}, the motivation for introducing the shift system lies in the study
of logarithmic CFTs/VOAs, particularly multiplet W-algebras (Definition \ref{def: multiplet W-algebra}). 
In the case of $\mathfrak{g}=\mathfrak{sl}_2$, the short exact sequence in Lemma \ref{lemma_felder} formed by the irreducible modules over the multiplet W-algebra (i.e., triplet Virasoro algebra) and the lattice VOA is called the \textit{Felder complex} \cite{Fel}. 
The shift system can be viewed as a purely combinatorial and Lie algebraic
formulation of this Felder complex, as well as a generalization to arbitrary $\mathfrak{g}$.

\begin{remark}\label{rmk easy facts on FT data}
\begin{enumerate}
\item\label{sum of pair shift relation}
By substituting $\sigma=\operatorname{id}$ to the axiom \eqref{associativity of carry-over}, we have $\operatorname{id}\uparrow\lambda=0$.
On the other hand, substituting $\sigma=\sigma_i$ shows that $\sigma_i\uparrow\lambda+\sigma_i\uparrow(\sigma_i\ast\lambda)=-\alpha_i$ ($\lambda\not\in\Lambda^{\sigma_i}$) or $-2\alpha_i$ ($\lambda\in\Lambda^{\sigma_i}$).
By axioms \eqref{associativity of carry-over} and \eqref{length and carry-over}, if $l(\sigma_i\sigma)=l(\sigma)-1$, then $(\sigma\uparrow\lambda,\alpha_i^\vee)\in\Z_{<0}$.
\item\label{independence of minimal exapression?}
For $\sigma\in W$, let us fix a minimal expression $\sigma=\sigma_{i_m}\cdots\sigma_{i_1}$.
Repeated use of the axiom \eqref{associativity of carry-over} shows that
\begin{align*}
\sigma\uparrow\lambda&=(\sigma_{i_m}\cdots\sigma_{i_1})\uparrow\lambda\\
&=\sum_{j=1}^{m}\sigma_{i_j}\uparrow(\sigma_{i_{j-1}}\cdots\sigma_{i_0}\ast\lambda)
-\sum_{j=1}^m(\sigma_{i_{j-1}}\cdots\sigma_{i_1}\uparrow\lambda,\alpha_{i_j}^\vee)\alpha_{i_j}.
\end{align*}
By combining it with axiom \eqref{length and carry-over}, we can rewrite the condition \eqref{strong condition} as follows:
\begin{align}\label{another form of strong condition}
\text{For any $1\leq m\leq n$, $(\sigma_{i_m}\cdots\sigma_{i_1})\uparrow\lambda=\sum_{j=1}^{m}\sigma_{i_j}\uparrow(\sigma_{i_{j-1}}\cdots\sigma_{i_0}\ast\lambda)$.}
\end{align}
Note that at this point, these conditions depend on the minimal expression of $\sigma$.
In Example \ref{example ours} below, the independence of the choice of minimal expression is clear.
Note that if $w_0\uparrow\lambda$ is independent of the minimal expression of $w_0$, then we have $w_0\uparrow\lambda=-\rho$.
\item 
\label{maximality of W_i}
The $P_i$-submodule $W_{i,\lambda}$ above is the maximal $P_i$-submodule of $V_\lambda$.
In fact, by applying the long exact sequence $H^\bullet(P_i\times_B-)$ to the short exact sequence, we have $H^0(P_i\times_BV_\lambda)\simeq W_{i,\lambda}$. 
If $M$ is a $P_i$-submodule of $V_\lambda$, then by Lemma \ref{Sug 114 115 lemma 4.5}, we have the $P_i$-module isomorphism
\begin{align*}
M\simeq H^0(P_i\times_BM)\subseteq H^0(P_i\times_BV_\lambda)\simeq W_{i,\lambda},
\end{align*}
where both isomorphisms above are given by the evaluation map $\operatorname{ev}\colon s\mapsto s(\operatorname{id}_{P_i/B})$ and its inverse.
\item\label{Sug 3.14-3.16}
For $j\in I$, $\beta\in P$ such that $(\beta,\alpha_j^\vee)\geq 0$ and $v\in V_\lambda^{h=\beta}$, we have
$v=0$ (resp. $v\in W_{j,\lambda}$) iff $f_j^{(\beta,\alpha_j^\vee)}v=0$ (resp. $f_j^{(\beta,\alpha_j^\vee)+1}v=0$).
\end{enumerate}
\end{remark}

\begin{example}\label{example ours}
Let us give an example of $(\Lambda,\uparrow)$ in the shift system.
For a fixed $x\in\h^\ast_{\R}$, any $\mu\in\h^\ast_{\R}$ has the unique decomposition $\mu=-\mu^\bullet+\mu_\bullet$, where $\mu^\bullet\in P$ and $\mu_\bullet\in\h^\ast_{\R}$ such that $0<(\mu_\bullet+x,\alpha_i^\vee)\leq 1$ for any $i\in I$.
For $\sigma\in W$, set $\sigma\ast\mu=-\mu^\bullet+\sigma(\mu_\bullet+x)-x$. 
It defines a $W$-action on $\h^\ast/Q$, and let $\Lambda$ be the unique representatives of a $W$-submodule of $\h^\ast/Q$ (i.e., in the unique decomposition $\lambda=-\lambda^\bullet+\lambda_\bullet\in \Lambda$, $\lambda^\bullet\in P_{\mathrm{min}}$).
Then the $\Lambda$ and the ``carry-over of the $W$-action"
\begin{align*}
\sigma\uparrow\lambda:=\sigma\ast\lambda_\bullet-(\sigma\ast\lambda)_\bullet\in P    
\end{align*}
satisfies the axiom in Definition \ref{Def FT data} (well-definedness and the independence above are clear). 
In fact, we have
\begin{align*}
\sigma_i\sigma\uparrow\lambda
&=\sigma_i\sigma\ast\lambda_\bullet-({\sigma_i\sigma\ast\lambda})_\bullet\\
&=\sigma_i\sigma(\lambda_\bullet+x)-x-({\sigma_i\sigma\ast\lambda})_\bullet\\
&=\sigma(\lambda_\bullet+x)-x-(\sigma(\lambda_\bullet+x),\alpha_i^\vee)\alpha_i-({\sigma_i\sigma\ast\lambda})_\bullet\\
&=\sigma\ast\lambda_\bullet-({\sigma\ast\lambda})_\bullet+({\sigma\ast\lambda})_\bullet-(\sigma(\lambda_\bullet+x)-x-({\sigma\ast\lambda})_\bullet+({\sigma\ast\lambda})_\bullet+x,\alpha_i^\vee)\alpha_i-({\sigma_i\sigma\ast\lambda})_\bullet\\
&=\sigma\uparrow\lambda+({\sigma\ast\lambda})_\bullet-(\sigma\uparrow\lambda+({\sigma\ast\lambda})_\bullet+x,\alpha_i^\vee)\alpha_i-({\sigma_i\sigma\ast\lambda})_\bullet\\
&=\sigma_i(\sigma\uparrow\lambda)+\sigma_i\uparrow(\sigma\ast\lambda),
\end{align*}
and thus \eqref{associativity of carry-over} is satisfied.
Furthermore, if $l(\sigma_i\sigma)=l(\sigma)+1$, then we have
\begin{align*}
(\sigma_i\sigma\uparrow\lambda,\alpha_i^\vee)
&=(\sigma_i\sigma\ast\lambda_\bullet-({\sigma_i\sigma\ast\lambda})_\bullet,\alpha_i^\vee)\\ 
&=-(\sigma(\lambda_\bullet+x),\alpha_i^\vee)-(({\sigma_i\sigma\ast\lambda})_\bullet+x,\alpha_i^\vee)\in\Z_{<0},
\end{align*}
because $\sigma^{-1}\alpha_i\in\Delta_+$.
In particular, when $\sigma=\operatorname{id}$, we have
\begin{align*}
(\sigma_i\uparrow\lambda,\alpha_i^\vee)=-(\lambda_\bullet+x,\alpha_i^\vee)-(({\sigma_i\ast\lambda})_\bullet+x,\alpha_i^\vee)\in\{-1,-2\}
\end{align*}
and thus \eqref{-1 property of carry-over} follows from the assumption.
On the other hand, since
\begin{align*}
(\sigma_i\sigma\uparrow\lambda,\alpha_i^\vee)
=(\sigma_i(\sigma\uparrow\lambda)+\sigma_i\uparrow(\sigma\ast\lambda),\alpha_i^\vee)
=-(\sigma\uparrow\lambda,\alpha_i^\vee)+(\sigma_i\uparrow(\sigma\ast\lambda),\alpha_i^\vee),
\end{align*}
we have
$(\sigma\uparrow\lambda,\alpha_i^\vee)>(\sigma_i\uparrow(\sigma\ast\lambda),\alpha_i^\vee)$.
By the assumption and the discussion above, we can check \eqref{length and carry-over}.\\
For a shift system of this example, we use the notation
\begin{align}
\label{G-module decomposition of FT}
H^0(G\times_BV_\lambda)
\simeq
\bigoplus_{\alpha\in P_+\cap Q}L_{\alpha+\lambda^\bullet}\otimes\mathcal{W}_{-\alpha+\lambda}.
\end{align}
In Part \ref{part 2}, we will consider the cases where $\Lambda$ is the unique representative of $\tfrac{1}{p}Q^\ast/Q$ for the $W$-action defined via
$x=\tfrac{1}{p}\rho^\vee$ ($p=r^\vee m$, $m\in\Z_{\geq 1}$) and $x=\tfrac{1}{p}\rho$ ($p=2m-1$, $m\in\Z_{\geq 1}$ and $\g=B_r$), respectively. 
\end{example}

The following lemma is shown in exactly the same manner as \cite{Sug}, so the proof is omitted.
\begin{lemma}\label{Sug-3.18}
\cite[Lemma 3.17-3.18]{Sug}
Let $x\in V_\lambda^{h=\beta}$ be a nonzero vector for some $\beta\in P_+$.
If there exists a $B$-module homomorphism $\Phi\colon L_\beta\rightarrow U(\mathfrak{b})x$ that sends the highest weight vector $y_{\beta}$ of $L_\beta$ to $x$, then $\Phi$ is an isomorphism.
In particular,
$U(\mathfrak{b})x$ is a $G$-submodule of $V_\lambda$.
\end{lemma}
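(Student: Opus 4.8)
The plan is to establish surjectivity and injectivity of $\Phi$ separately: surjectivity is formal, and injectivity I would reduce to a single non-vanishing statement propagated along a reduced word for $w_0$ via the criterion in Remark \ref{rmk easy facts on FT data}\eqref{Sug 3.14-3.16}. For surjectivity, note that $\Phi(L_\beta)$ is a $\mathfrak{b}$-submodule of $U(\mathfrak{b})x$ containing $x=\Phi(y_\beta)$; since $U(\mathfrak{b})x$ is by definition generated over $\mathfrak{b}$ by $x$, we get $\Phi(L_\beta)=U(\mathfrak{b})x$. For injectivity, observe that $\ker\Phi$ is a weight $\mathfrak{b}$-submodule of $L_\beta$, in particular stable under the nilpotent algebra $\mathfrak{n}_-$. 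If $\ker\Phi\neq 0$, then acting by $\mathfrak{n}_-$ repeatedly on a nonzero weight vector produces a nonzero vector annihilated by all $f_j$; as the space of such vectors is the one-dimensional lowest weight space $\mathbb{C}y'_\beta$, this forces $y'_\beta\in\ker\Phi$. Hence injectivity of $\Phi$ is equivalent to $\Phi(y'_\beta)\neq 0$.

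To prove $\Phi(y'_\beta)\neq 0$, I would fix a reduced expression $w_0=\sigma_{j_N}\cdots\sigma_{j_1}$, set $w_k=\sigma_{j_k}\cdots\sigma_{j_1}$ and $m_k=(w_{k-1}\beta,\alpha_{j_k}^\vee)$, and use the standard $SL_2^{j}$-string description of extremal weight vectors in the integrable module $L_\beta$. This gives that each partial product $v_{k-1}:=f_{j_{k-1}}^{m_{k-1}}\cdots f_{j_1}^{m_1}y_\beta$ is a highest weight vector for $SL_2^{j_k}$ of weight $w_{k-1}\beta$ with $m_k\geq 0$, that $v_k=f_{j_k}^{m_k}v_{k-1}\neq 0$, and that $v_N=y'_\beta$ up to a nonzero scalar. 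Then I would propagate non-vanishing through $\Phi$ by induction on $k$: we have $\Phi(v_0)=x\neq 0$, and assuming $\Phi(v_{k-1})\neq 0$, the vector $\Phi(v_{k-1})\in V_\lambda^{h=w_{k-1}\beta}$ satisfies $(w_{k-1}\beta,\alpha_{j_k}^\vee)=m_k\geq 0$, so Remark \ref{rmk easy facts on FT data}\eqref{Sug 3.14-3.16} in the $j_k$-direction yields $f_{j_k}^{m_k}\Phi(v_{k-1})\neq 0$. Since $\Phi$ is a $\mathfrak{b}$-homomorphism it commutes with $f_{j_k}$, so $\Phi(v_k)=f_{j_k}^{m_k}\Phi(v_{k-1})\neq 0$. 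After $N$ steps this gives $\Phi(y'_\beta)\neq 0$, contradicting $y'_\beta\in\ker\Phi$; hence $\ker\Phi=0$ and $\Phi$ is an isomorphism.

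Finally, transporting the $G$-module structure of $L_\beta$ through the $B$-module isomorphism $\Phi$ endows $U(\mathfrak{b})x$ with a $G$-action extending its $B$-action, which is precisely what it means (cf. the footnote to Theorem \ref{main theorem of part 1}) for $U(\mathfrak{b})x$ to be a $G$-submodule of $V_\lambda$. I expect the main obstacle to be the non-vanishing propagation of the second paragraph: this is the only place where the staggered-module structure genuinely enters, through the $SL_2^{j}$-integrability encoded in Remark \ref{rmk easy facts on FT data}\eqref{Sug 3.14-3.16}, and the delicate point is to match its hypothesis $(\mu,\alpha_{j_k}^\vee)\geq 0$ at every stage with the extremal-vector descent in $L_\beta$. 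The surjectivity and the reduction to the lowest weight vector are purely formal.
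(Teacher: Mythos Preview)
Your proof is correct and follows essentially the same route as the paper (i.e., the argument of \cite[Lemma 3.17--3.18]{Sug}): the key step in both is the construction of $u_\beta=f_{j_N}^{m_N}\cdots f_{j_1}^{m_1}$ along a reduced word for $w_0$ and the propagation of non-vanishing via Remark \ref{rmk easy facts on FT data}\eqref{Sug 3.14-3.16}. The only difference is packaging---the paper isolates the non-vanishing of $u_\beta x$ as a separate lemma and then argues injectivity by contradiction on an arbitrary kernel element, whereas you reduce injectivity directly to $\Phi(y'_\beta)\neq 0$---but this is cosmetic.
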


\subsection{Feigin--Tipunin conjecture/construction}
Let $(\Lambda,\uparrow,\{V_\lambda\}_{\lambda\in\Lambda})$ be a shift system.
In this subsection, we will give a brief necessary and sufficient condition for the Feigin-Tipunin conjecture \cite{FT,Sug}
\begin{align*}
H^0(G\times_BV_\lambda)\simeq\bigcap_{i\in I}W_{i,\lambda}
\subseteq V_\lambda
\end{align*}
to hold.
For $(i,j,\lambda)\in I\times I\times \Lambda$, we consider the following condition (see \cite[(98),(99),(100)]{Sug}):
\begin{align}\label{Sug-98}
\text{$\lambda\in\Lambda^{\sigma_j}$ or $(\cryov{\sigma_j}{\lambda},\alpha_i^\vee)=-\delta_{ij}$}.
\end{align}
For a subset $J\subseteq I$, we also consider the following condition:
\begin{align}\label{Sug-99}
\text{$(i,j,\lambda)$ satisfies \eqref{Sug-98} for any $(i,j)\in J\times J$}.
\end{align}
When $J=I$, the condition \eqref{Sug-99} is stated as follows (this is the weak condition \eqref{weak condition} in Theorem \ref{main theorem of part 1}\eqref{main theorem 1(1)}):
\begin{align}\label{Sug-100}
\text{$(i,j,\lambda)$ satisfies \eqref{Sug-98} for any $(i,j)\in I\times I$.}
\end{align}
\begin{lemma}\label{Sug-4.3}
\cite[Lemma 4.3]{Sug}
Let $i,j\in I$ and $\lambda\in\Lambda$. 
If $(i,j,\lambda)$ satisfies \eqref{Sug-98}, then $W_{i,\lambda}\cap W_{j,\lambda}$ is closed under $e_{i}$.
In particular, if $(J,\lambda)\subseteq I\times\Lambda$ satisfies \eqref{Sug-99}, then $\bigcap_{j\in J}W_{j,\lambda}$ is a $P_J$-module.
\end{lemma}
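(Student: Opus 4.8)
The plan is to reduce to the single nontrivial configuration and then establish $e_iv\in W_{j,\lambda}$ by proving $Q_{j,\lambda}(e_iv)=0$. First I would dispose of the easy cases. If $i=j$, then $W_{i,\lambda}\cap W_{j,\lambda}=W_{i,\lambda}$ is a $P_i$-module and hence $e_i$-stable. If $i\neq j$ but $\lambda\in\Lambda^{\sigma_j}$, then by Definition \ref{Def FT data}\eqref{Felder complex of FT data} $V_\lambda$ is a $P_j$-module and $W_{j,\lambda}=V_\lambda$, so again $W_{i,\lambda}\cap W_{j,\lambda}=W_{i,\lambda}$ is $e_i$-stable. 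Thus \eqref{Sug-98} leaves only the case $i\neq j$, $\lambda\notin\Lambda^{\sigma_j}$, where the hypothesis forces $(\cryov{\sigma_j}{\lambda},\alpha_i^\vee)=0$, while axiom \eqref{-1 property of carry-over} gives $(\cryov{\sigma_j}{\lambda},\alpha_j^\vee)=-1$.

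Fix $v\in W_{i,\lambda}\cap W_{j,\lambda}$ of weight $\beta$. Since $W_{i,\lambda}$ is a $P_i$-module we have $e_iv\in W_{i,\lambda}$, so it remains to check $e_iv\in W_{j,\lambda}=\ker Q_{j,\lambda}$. The structural inputs I would use are: $Q_{j,\lambda}$ is a $B$-module map, hence commutes with $\mathfrak{n}_{-}$ and intertwines the $\mathfrak{h}$-action up to the twist by $\cryov{\sigma_j}{\lambda}$; the Chevalley relation $[e_i,f_j]=0$ for $i\neq j$; and the $f_j$-string criterion of Remark \ref{rmk easy facts on FT data}\eqref{Sug 3.14-3.16}. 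Combining the first two gives $f_j^{m}Q_{j,\lambda}(e_iv)=Q_{j,\lambda}(f_j^{m}e_iv)=Q_{j,\lambda}(e_if_j^{m}v)$ for all $m$, and since $v\in W_{j,\lambda}$ yields $f_j^{(\beta,\alpha_j^\vee)+1}v=0$ when $(\beta,\alpha_j^\vee)\geq 0$, the image $Q_{j,\lambda}(e_iv)$ is annihilated by a controlled power of $f_j$, i.e.\ it lies in a bounded $f_j$-string inside the target $W_{j,\sigma_j\ast\lambda}(\cryov{\sigma_j}{\lambda})$.

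The role of the hypothesis $(\cryov{\sigma_j}{\lambda},\alpha_i^\vee)=0$ is that the one-dimensional $B$-module $\C_{\cryov{\sigma_j}{\lambda}}$ then extends to $SL_2^i$ (a $B$-character extends to $P_i$ exactly when it is orthogonal to $\alpha_i^\vee$), so the twist does not disturb $\alpha_i$-weights and the $SL_2^i$-module structure of $W_{i,\lambda}$ is matched on the target side. I would then upgrade the $B$-linear restriction $Q_{j,\lambda}|_{W_{i,\lambda}}$ to an $SL_2^i$-compatible statement on its kernel: following the highest-weight-vector method used in the proof of Lemma \ref{Sug-4.4.1}, it suffices to verify $Q_{j,\lambda}(e_iv)=0$ for $v$ an $SL_2^i$-highest weight vector of $W_{i,\lambda}\cap W_{j,\lambda}$, and for such $v$ the combination of the string criteria in both directions $i$ and $j$, together with $[e_i,f_j]=0$ and the weight bookkeeping enabled by $(\cryov{\sigma_j}{\lambda},\alpha_i^\vee)=0$, should force the vanishing.

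The main obstacle is the case of adjacent simple roots, $(\alpha_i,\alpha_j^\vee)<0$: there $(\beta+\alpha_i,\alpha_j^\vee)<(\beta,\alpha_j^\vee)$, so the power of $f_j$ detecting membership of $e_iv$ in $W_{j,\lambda}$ is strictly shorter than the one annihilating $v$, and a naive transport of the $f_j$-string vanishing is insufficient. This is precisely where condition \eqref{Sug-98} and the maximality of $W_{i,\lambda}$ (Remark \ref{rmk easy facts on FT data}\eqref{maximality of W_i}) must enter through the $SL_2^i$-module structure rather than bare weight counting, and I expect it to be the delicate step. Finally, the \emph{In particular} statement follows formally: if $(J,\lambda)$ satisfies \eqref{Sug-99}, then for every ordered pair $(k,m)\in J\times J$ the first part shows that $W_{k,\lambda}\cap W_{m,\lambda}$ is $e_k$-stable, whence $\bigcap_{j\in J}W_{j,\lambda}$ is stable under every $e_k$ ($k\in J$); being also $\mathfrak{b}$-stable, it is a $P_J$-submodule by Lemma \ref{Sug-4.4.1} applied with $M_j=W_{j,\lambda}$.
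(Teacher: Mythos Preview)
Your setup (easy cases, the role of the hypothesis $(\sigma_j\uparrow\lambda,\alpha_i^\vee)=0$ in preserving $\alpha_i^\vee$-weights under the twist) is correct, but the main argument has a genuine gap. You try to detect $e_iv\in W_{j,\lambda}$ via the $f_j$-string criterion, and you correctly observe that for adjacent roots this fails because $(\beta+\alpha_i,\alpha_j^\vee)<(\beta,\alpha_j^\vee)$. You then propose to salvage it by reducing to $v$ an $SL_2^i$-highest weight vector in $W_{i,\lambda}\cap W_{j,\lambda}$, but that reduction is vacuous: if $e_iv=0$ there is nothing to prove. The $f_j$-direction is simply the wrong tool here, and the obstacle you name is not resolved by anything in your outline.

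The paper's proof uses the $f_i$-string in the \emph{target}, not the $f_j$-string. Concretely: given $v\in W_{i,\lambda}\cap W_{j,\lambda}$, write $v=f_i^nA_\beta$ with $A_\beta\in (W_{i,\lambda})_\beta^{e_i}$ a highest weight vector for $SL_2^i$. Then $f_i^{(\beta,\alpha_i^\vee)}A_\beta=f_i^{(\beta,\alpha_i^\vee)-n}v\in W_{j,\lambda}$ (since $W_{j,\lambda}$ is $f_i$-stable), hence
\[
f_i^{(\beta,\alpha_i^\vee)}Q_{j,\lambda}(A_\beta)=Q_{j,\lambda}\bigl(f_i^{(\beta,\alpha_i^\vee)}A_\beta\bigr)=0.
\]
Now apply Remark~\ref{rmk easy facts on FT data}\eqref{Sug 3.14-3.16} in the $i$-direction to $Q_{j,\lambda}(A_\beta)\in V_{\sigma_j\ast\lambda}$: its untwisted weight is $\beta-(\sigma_j\uparrow\lambda)$, and by the hypothesis $(\sigma_j\uparrow\lambda,\alpha_i^\vee)=0$ this has the same pairing $(\beta,\alpha_i^\vee)$ with $\alpha_i^\vee$, so the criterion gives $Q_{j,\lambda}(A_\beta)=0$. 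Thus $A_\beta\in W_{j,\lambda}$, and by $f_i$-stability the entire string, including $e_iv=cf_i^{n-1}A_\beta$, lies in $W_{j,\lambda}$. This is exactly where the hypothesis is used, and it avoids the $f_j$-direction entirely. Your treatment of the ``In particular'' clause via Lemma~\ref{Sug-4.4.1} is correct.
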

\proof
If $\lambda\in\Lambda^{\sigma_j}$ or $i=j$, then we have 
$W_{i,\lambda}\cap W_{j,\lambda}=W_{i,\lambda}$
and the assertion is clear. 
Let us assume that $i\not=j$ and $(\cryov{\sigma_j}{\lambda},\alpha_i^\vee)=0$.
It suffices to show that if $f_i^nA_\beta\in W_{i,\lambda}\cap W_{j,\lambda}$ for some $A_\beta\in(W_{i,\lambda})_{\beta}^{e_i}$, $(\beta,\alpha_i^\vee)>0$, and $0<n\leq(\beta,\alpha_i^\vee)$, 
then $A_\beta$ (and thus $f_i^{n-1}A_\beta$) is also in $W_{i,\lambda}\cap W_{j,\lambda}$.
Since $f_i^{(\beta,\alpha_i^\vee)}A_\beta\in W_{j,\lambda}$, we have 
\begin{align}
f_{i}^{(\beta,\alpha_i^\vee)}Q_{j,\lambda}A_{\beta}
=Q_{j,\lambda}f_{i}^{(\beta,\alpha_i^\vee)}A_{\beta}=0,
\end{align}
By the assumption $(\cryov{\sigma_j}{\lambda},\alpha_i^\vee)=0$ and Remark \ref{rmk easy facts on FT data}\eqref{Sug 3.14-3.16}, we have $Q_{j,\lambda}A_{\beta}=0$, that is, $A_\beta\in W_{j,\lambda}$.
The last assertion follows from Lemma \ref{Sug-4.2} and Lemma \ref{Sug-4.4.1}.
\endproof

\begin{lemma}\label{Sug-4.4.2}
\cite[Theorem 4.4]{Sug}
Only if $(J,\lambda)$ satisfies \eqref{Sug-99}, then $\bigcap_{j\in J}W_{j,\lambda}$ is a $P_J$-submodule of $V_\lambda$.     
\end{lemma}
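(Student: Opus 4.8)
\proof
The statement is the converse of Lemma \ref{Sug-4.3}, and the plan is to prove it in contrapositive form: assuming $(J,\lambda)$ does \emph{not} satisfy \eqref{Sug-99}, I will produce an index $i\in J$ and a vector $v\in\bigcap_{k\in J}W_{k,\lambda}$ with $e_iv\notin\bigcap_{k\in J}W_{k,\lambda}$, so the intersection is not $e_i$-stable and hence not a $P_J$-submodule. First I would pin down the offending pair. The condition \eqref{Sug-98} holds automatically whenever $i=j$ or $\lambda\in\Lambda^{\sigma_j}$: indeed for $\lambda\notin\Lambda^{\sigma_j}$ the axiom in Definition \ref{Def FT data}\eqref{-1 property of carry-over} gives $(\cryov{\sigma_j}{\lambda},\alpha_j^\vee)=-1=-\delta_{jj}$. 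Thus a failure of \eqref{Sug-99} forces distinct $i,j\in J$ with $\lambda\notin\Lambda^{\sigma_j}$ and $m:=(\cryov{\sigma_j}{\lambda},\alpha_i^\vee)\neq 0$.

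Next I would reverse the computation of Lemma \ref{Sug-4.3}. There, for an $SL_2^i$-highest weight vector $A_\beta\in(W_{i,\lambda})_\beta^{e_i}$ with $d:=(\beta,\alpha_i^\vee)>0$, the hypothesis $m=0$ identified the $\alpha_i^\vee$-weight of $Q_{j,\lambda}A_\beta$ in the \emph{untwisted} module $W_{j,\sigma_j\ast\lambda}$ with $d$, so that $f_i^{d}Q_{j,\lambda}A_\beta=0$ forced $Q_{j,\lambda}A_\beta=0$ through the injectivity half of Remark \ref{rmk easy facts on FT data}\eqref{Sug 3.14-3.16}, saturating the whole $f_i$-string inside $W_{j,\lambda}$. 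When $m\neq 0$ the relevant $\alpha_i^\vee$-weight in the image is shifted to $d-m$, the injectivity step breaks, and I expect to produce $A_\beta$ with $Q_{j,\lambda}A_\beta\neq 0$ (so $A_\beta\notin W_{j,\lambda}$) yet $f_i^{n}A_\beta\in W_{j,\lambda}$ for some $1\leq n\leq d$; concretely the $f_i$-string of $Q_{j,\lambda}A_\beta$ in $W_{j,\sigma_j\ast\lambda}$ terminates after about $d-m$ steps while the string of $A_\beta$ in $V_\lambda$ has length $d$, creating the gap. Taking $n$ minimal with $f_i^{n}A_\beta\in W_{j,\lambda}$ and setting $v:=f_i^{n}A_\beta$, one has $v\in W_{i,\lambda}\cap W_{j,\lambda}$, while $e_iv=n(d-n+1)f_i^{n-1}A_\beta$ is a nonzero multiple of $f_i^{n-1}A_\beta\notin W_{j,\lambda}$; hence $e_iv\notin W_{j,\lambda}$, which is the desired non-closure.

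The hard part is twofold. Most seriously, the witness must sit in the \emph{full} intersection $\bigcap_{k\in J}W_{k,\lambda}$ rather than only in $W_{i,\lambda}\cap W_{j,\lambda}$, since the reduction to the rank-$2$ sub-datum $\{i,j\}$ is not automatic: the full intersection can be strictly smaller. I would arrange this by choosing $\beta$ dominant and checking $v\in W_{k,\lambda}$ for the remaining $k\in J$ directly through the $f_k$-string criterion of Remark \ref{rmk easy facts on FT data}\eqref{Sug 3.14-3.16}, while being careful to keep $A_\beta$ (hence $v$) \emph{outside} every $G$-submodule of $V_\lambda$: by Lemma \ref{Sug-3.18} a vector generating a copy of $L_\beta$ would already lie in all $W_{k,\lambda}$, but then $e_iv$ would stay in that $G$-submodule and could not escape, so the genuinely staggered part of $V_\lambda$ is essential. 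The second, more bookkeeping, difficulty is the sign of $m$: its orientation decides whether it is $e_i$ or $e_j$ that breaks closure, and I would use that \eqref{Sug-99} ranges over all \emph{ordered} pairs to select a pair for which the string-length gap, and thus $Q_{j,\lambda}(e_iv)\neq 0$, is forced. Granting these, $e_iv\in W_{i,\lambda}\setminus W_{j,\lambda}\subseteq V_\lambda\setminus\bigcap_{k\in J}W_{k,\lambda}$, completing the contrapositive.
\endproof
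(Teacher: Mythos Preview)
Your contrapositive strategy and the shape of the witness you are after are correct, and match the paper. The genuine gap is that you never \emph{construct} the vector $A_\beta$ (your notation); you only list the properties it should have and say you ``expect to produce'' it. Both difficulties you flag---landing in the full intersection $\bigcap_{k\in J}W_{k,\lambda}$, and the sign of $m$---are real, and neither is resolved by ``checking directly through the $f_k$-string criterion'' or by ``selecting a pair'': without a construction there is nothing to check.

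The paper supplies exactly the missing construction, and it works from the \emph{target} of a short screening operator rather than from $V_\lambda$ itself. With the failing pair written so that $(\cryov{\sigma_i}{\lambda},\alpha_j^\vee)>0$ (swapping your $i,j$), one picks a nonzero $x\in\bigcap_{k\in J}(W_{k,\sigma_i\ast\lambda}^{h=\beta})^{e_k}$ for a $J$-dominant $\beta$, takes any preimage $w\in V_\lambda$ under $Q_{i,\lambda}$, and sets $z'=f_j^{(\sigma_i(\beta)+\cryov{\sigma_i}{\lambda},\alpha_j^\vee)}f_i^{(\beta,\alpha_i^\vee)}w$. The two points you were missing are: (a) the Serre relation $\ad(f_k)^{1-c_{ki}}f_i=0$ is what forces $f_i^{(\beta,\alpha_i^\vee)}w\in\bigcap_{k\neq i}W_{k,\lambda}$, solving the ``full intersection'' problem for all $k\neq i$ at once; and (b) the positivity $(\cryov{\sigma_i}{\lambda},\alpha_j^\vee)>0$ is precisely what makes $Q_{i,\lambda}z'=f_j^{(\cryov{\sigma_i}{\lambda},\alpha_j^\vee)}z=0$, placing $z'$ in $W_{i,\lambda}$ as well. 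If the intersection were $e_j$-stable one could climb back from $z'$ to $f_i^{(\beta,\alpha_i^\vee)}w$ inside it, forcing $y=Q_{i,\lambda}f_i^{(\beta,\alpha_i^\vee)}w=0$, contradicting $y=f_i^{(\beta,\alpha_i^\vee)}x\neq 0$.
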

\proof
Let us assume that $(J,\lambda)$ does not satisfy \eqref{Sug-99}. 
Then there exists a pair $(j,i)\in J\times J$ such that $i\not=j$, $\sigma_i\uparrow\lambda\not=-\alpha_i$ and $(\sigma_i\uparrow\lambda,\alpha_j^\vee)>0$.
For $\beta\in P$ such that $(\beta,\alpha_k^\vee)\geq 0$ for $k\in J$, we take nonzero vectors 
\begin{align*}
x\in\bigcap_{k\in J}
(W_{k,\sigma\ast\lambda}^{h=\beta})^{e_k},\ \ 
y=f_i^{(\beta,\alpha_i^\vee)}x,\ \ 
z=f_j^{\sigma_i(\beta),\alpha_j}y.
\end{align*}
Clearly, we have $f_iy=0$.
Furthermore, since $y\in (W_{j,\sigma_i\ast\lambda}^{h=\sigma_i(\beta)})^{e_j}$, we have $f_jz=0$.
Denote $w\in V_\lambda^{h=\beta+\sigma_i\uparrow\lambda}$ a preimage of $x$, namely,  $Q_{i,\lambda}w=x$.
Then
\begin{align}\label{assumptionX}
0
\not=
f_i^{(\beta,\alpha_i^\vee)}w
\in\bigcap_{i\not=k\in J}(W_{k,\lambda}^{h=\sigma(\beta)+\sigma_i\uparrow\lambda})^{e_k}.
\end{align}
In fact, using the Serre relation $\ad(f_k)^{1-(\alpha_k^\vee,\alpha_i)}f_i=0$ repeatedly, we have
\begin{align*}
f_k^{(\sigma_i(\beta)+\sigma_i\uparrow\lambda,\alpha_k^\vee)+1}f_i^{(\beta,\alpha_i^\vee)}w
=
f_k^{(\beta+\sigma_i\uparrow\lambda,\alpha_k^\vee)+1}(f_k^{-(\alpha_k^\vee,\alpha_i)(\beta,\alpha_i^\vee)}f_i^{(\beta,\alpha_i^\vee)})w=0.
\end{align*}
Let us define the nonzero vector $z'$ by
\begin{align*}
z'=f_j^{(\sigma_i(\beta)+\sigma_i\uparrow\lambda,\alpha_j^\vee)}f_i^{(\beta,\alpha_i^\vee)}w.
\end{align*}
Then by the assumption $(\sigma_i\uparrow\lambda,\alpha_j^\vee)>0$, we have
\begin{align*}
Q_{i,\lambda}z'
=f_j^{(\sigma_i(\beta)+\sigma_i\uparrow\lambda,\alpha_j^\vee)}Q_{i,\lambda}f_i^{(\beta,\alpha_i^\vee)}w
=f_j^{(\sigma_i(\beta)+\sigma_i\uparrow\lambda,\alpha_j^\vee)}y
=f_j^{(\sigma_i\uparrow\lambda,\alpha_j^\vee)}z=0.
\end{align*}
By \eqref{assumptionX}, we have $z'\in \bigcap_{k\in J}W_{k,\lambda}$.
On the other hand, if $\bigcap_{k\in J}W_{k,\lambda}$ is closed under $e_j$, then we have
\begin{align*}
f_i^{(\beta,\alpha_i^\vee)}w\in \bigcap_{k\in J}W_{k,\lambda},
\end{align*}
and thus the image $y$ of $Q_{i,\lambda}$ is zero. 
It contradicts the fact $y\not=0$ above, and thus $\bigcap_{k\in J}W_{k,\lambda}$ is not closed under $e_j$, and hence not a $P_J$-submodule of $V_\lambda$.
\endproof

\begin{theorem}
\cite[Theorem 4.14]{Sug}
\label{Sug 4-14}
The evaluation map
\begin{align*}
\mathrm{ev}\colon H^0(G\times_B V_\lambda)\rightarrow V_\lambda,\ \ s\mapsto s(1_{G/B})
\end{align*}
is an injective $B$-module homomorphism.
In particular, $H^0(G\times_BV_\lambda)$ sends to the maximal $G$-submodule of $V_\lambda$.
\end{theorem}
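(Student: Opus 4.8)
The plan is to prove simultaneously that $\mathrm{ev}$ is injective and that its image is the maximal $G$-submodule, by reducing everything to the rank-$1$ ($\mathbf{P}^1$) situation and then propagating vanishing along a tower of $\mathbf{P}^1$-fibrations. First I would pass to finite data: by Definition \ref{Def FT data}\eqref{conformal grading of FT data} one has $V_\lambda=\bigoplus_\Delta V_{\lambda,\Delta}$ with each $V_{\lambda,\Delta}$ a finite-dimensional $B$-submodule, so $G\times_BV_\lambda=\bigoplus_\Delta G\times_BV_{\lambda,\Delta}$ and $H^0$ commutes with this direct sum; since $\mathrm{ev}$ is homogeneous for the $\Delta$-grading, it suffices to work on each finite-dimensional summand, and the whole problem becomes one of sections of a genuine vector bundle on $G/B$.

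The rank-$1$ input is the engine. For each $i\in I$ I would apply $H^\bullet(P_i\times_B-)$ to the short exact sequence $0\to W_{i,\lambda}\to V_\lambda\xrightarrow{Q_{i,\lambda}}W_{i,\sigma_i\ast\lambda}(\sigma_i\uparrow\lambda)\to 0$ of Definition \ref{Def FT data}\eqref{Felder complex of FT data}. Since $W_{i,\sigma_i\ast\lambda}$ is a $P_i$-module and $(\sigma_i\uparrow\lambda,\alpha_i^\vee)=-1$ by Definition \ref{Def FT data}\eqref{-1 property of carry-over}, Lemma \ref{Sug 114 115 lemma 4.5} gives $H^0(P_i\times_BW_{i,\sigma_i\ast\lambda}(\sigma_i\uparrow\lambda))=0$, whence $H^0(P_i\times_BV_\lambda)\simeq W_{i,\lambda}$; by Remark \ref{rmk easy facts on FT data}\eqref{maximality of W_i} this identification is precisely evaluation at $\operatorname{id}_{P_i/B}$ (when $\lambda\in\Lambda^{\sigma_i}$ one has $W_{i,\lambda}=V_\lambda$ and the conclusion is immediate, using Lemma \ref{Sug-4.2} for the $\mathbf{P}^1$-trivialization). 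The key consequence is that evaluation at the base point is injective on global sections over the $\mathbf{P}^1$-fiber $P_i/B$; by $G$-homogeneity the same holds for the $\mathbf{P}^1$-fiber through any point. Hence a global section $s$ with $s(\operatorname{id}_{G/B})=0$ restricts to zero on the entire fiber $P_i/B$ for every $i$.

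For injectivity I would then sweep out $G/B$. Fixing a reduced expression $w_0=\sigma_{i_{l(w_0)}}\cdots\sigma_{i_1}$ and the associated Bott--Samelson tower $P_{i_{l(w_0)}}\times_B\cdots\times_BP_{i_1}/B\to G/B$, which is surjective, vanishing of $s$ at $\operatorname{id}_{G/B}$ forces vanishing on $P_{i_1}/B$ by the previous step; applying the fiberwise injectivity in direction $i_2$ at each point of $P_{i_1}/B$ gives vanishing on $P_{i_2}P_{i_1}/B$, and iterating up the tower yields vanishing on $G/B=P_{i_{l(w_0)}}\cdots P_{i_1}/B$. Thus $s=0$, so $\mathrm{ev}$ is injective. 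For the image I would argue two inclusions: if $M\subseteq V_\lambda$ is a $G$-submodule then $G\times_BM\cong(G/B)\times M$ is $G$-equivariantly trivial, so its constant sections realize $M=H^0(G\times_BM)\hookrightarrow H^0(G\times_BV_\lambda)$ with evaluation equal to the inclusion $M\hookrightarrow V_\lambda$, giving $M\subseteq\operatorname{im}(\mathrm{ev})$; conversely, transporting the $G$-action of $H^0(G\times_BV_\lambda)$ across the injection $\mathrm{ev}$ and using its $B$-equivariance, $\operatorname{im}(\mathrm{ev})$ is a $B$-submodule of $V_\lambda$ whose $B$-action extends to $G$, hence is contained in the maximal such submodule. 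Combining the two inclusions identifies $\operatorname{im}(\mathrm{ev})$ with the maximal $G$-submodule.

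I expect the main obstacle to be the propagation step: making rigorous that the rank-$1$ injectivity is genuinely available at every point and its $\mathbf{P}^1$-fiber after translation (so that the local Felder-complex structure of Definition \ref{Def FT data}\eqref{Felder complex of FT data} transports along $G$-orbits), and that the Bott--Samelson sweeping truly exhausts $G/B$, so that pointwise vanishing accumulates to $s\equiv 0$ rather than merely to vanishing on a union of Schubert cells.
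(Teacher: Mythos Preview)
Your argument is correct and takes a genuinely different route from the paper. The paper proceeds representation-theoretically: it decomposes $H^0(G\times_BV_\lambda)$ into $G$-isotypic pieces $L_\beta\otimes W_\beta$, shows that evaluation is injective on each $W_\beta$ by constancy on the big cell $U_{\operatorname{id}}$, extends to $L_\beta\otimes W_\beta$ via Lemma~\ref{Sug-3.18} (a nonvanishing result for the $U(\mathfrak{n}_-)$-action on weight vectors of a staggered $B$-module), and finally separates different $\beta$'s by a minimality argument for the standard partial order on $P_+$. Your approach is purely geometric: you use the Felder complex only to get the rank-$1$ identification $H^0(P_i\times_BV_\lambda)\simeq W_{i,\lambda}$ via evaluation, and then propagate vanishing along $\mathbf{P}^1$-fibers through the Bott--Samelson product $P_{i_1}\cdots P_{i_{l(w_0)}}=G$. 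This avoids Lemma~\ref{Sug-3.18} and the partial-order bookkeeping entirely, and the worry you flag as the ``main obstacle'' is not one: the restriction of $G\times_BV_\lambda$ to any translate $gP_i/B$ is canonically $P_i\times_BV_\lambda$ via $[p,v]\mapsto[gp,v]$, so the rank-$1$ injectivity at the identity transports verbatim. (One small slip: the propagation produces $P_{i_1}P_{i_2}\cdots$ rather than $P_{i_2}P_{i_1}\cdots$, but since any reduced word for $w_0$ works this is immaterial.) The trade-off is that the paper's approach makes the $G$-module structure of the image explicit along the way, whereas yours reaches injectivity more directly but says less about the decomposition.
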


\begin{proof}
Since the proof is the same as \cite[Lemma 4.15 - 4.18]{Sug}, we omit the details.
For $\beta\in P_+$, denote $W_{\beta}$ the set of highest weight vectors of $H^0(G\times_BV_\lambda)$ with highest weight $\beta$.
Let $G/B=\bigcap_{\sigma\in W}U_\sigma$ be the Schubert open covering.
For $s\in W_\beta$, we have $s(N_+)=s(\operatorname{id})$, and thus $s|_{U_{\operatorname{id}}}=1\otimes v$ for some $v\in V_{\lambda}^{h=\beta}$.
By considering coordinate changes between $U_\sigma$'s, $\operatorname{ev}|_{W_\beta}$ is injective.
By Lemma \ref{Sug-3.18}, $\operatorname{ev}|_{L_{\beta}\otimes W_\beta}$ so is.
Now we just need to make sure that $L_\beta\otimes W_\beta$'s are linearly independent with respect to different highest weights $\beta$'s.
For a fixed $\gamma\in P$ and $\Delta$, let us take a vector $s=\sum_{\beta\in P_+}s_\beta\in H^0(G\times_BV_{\lambda,\Delta})^{h=\gamma}$ such that $\operatorname{ev}(s)=0$.
We fix a minimal highest weight $\alpha\in P_+$ in $\{\beta\in P_+\ |\ s_\beta\not=0\}$ with respect to the standard partial order $\geq$ on $\h^\ast$, i.e. $\mu\geq \mu'$ iff $\mu-\mu'\in\bigoplus_{i\in I}\Z_{\geq 0}\alpha_i$.
Since $L_{\beta_2}^{h=w_0(\beta_1)}=\{0\}$ for $\beta_1\not\leq\beta_2$, there exists $X\in U(\mathfrak{b})$ such that 
\begin{align*}
0=X\operatorname{ev}(s)=X\operatorname{ev}(s_\alpha)=\operatorname{ev}(X s_\alpha).
\end{align*}
As $\operatorname{s}|_{L_\alpha\otimes W_\alpha}$ is injective, we have $s_\alpha=0$.
By repeating this procedure, we have $s=0$ and thus $\operatorname{ev}$ is injective.
\end{proof}

Let us prove Theorem \ref{main theorem of part 1}\eqref{main theorem 1(1)} (see also \cite[Section 4.5]{Sug}).
By Theorem \ref{Sug 4-14} and Remark \ref{rmk easy facts on FT data}\eqref{maximality of W_i}, the first half of Theorem \ref{main theorem of part 1}\eqref{main theorem 1(1)} is proved.
By Lemma \ref{Sug-4.3} and Lemma \ref{Sug-4.4.2}, $\bigcap_{i\in I}W_{i,\lambda}$ has the $G$-module structure (and thus, in $H^0(G\times_BV_\lambda)$) if and only if $\lambda\in\Lambda$ satisfies the weak condition \eqref{weak condition}. 
This completes the proof.

\subsection{Borel--Weil--Bott-type theorem}
Let $(\Lambda,\uparrow,\{V\}_{\lambda\in\Lambda})$ be a shift system.
For $\mu\in P$ and $\lambda\not\in\Lambda^{\sigma_i}$, we have 
\begin{align}\label{the short exact sequence}
0\rightarrow 
W_{i,\lambda}(\mu)
\rightarrow 
V_{\lambda}(\mu)
\rightarrow
W_{i,\sigma_i\ast\lambda}(\mu+\sigma_i\uparrow\lambda)
\rightarrow 0.
\end{align}
\begin{lemma}\cite[Lemma 4.10]{Sug}\label{Sug-4.10}
For $i\in I$, $\sigma\in W$, $\lambda\in \Lambda$ such that 
$\ell(\sigma\sigma_i)=\ell(\sigma)+1$ and
$\sigma\ast\lambda\not\in\Lambda^{\sigma_i}$,
we have short exact sequences of $P_i$-modules 
\begin{align*}
0&\rightarrow 
H^0(P_i\times_B \C_{\sigma\uparrow\lambda})\otimes W_{i,\sigma\ast\lambda}\\&\rightarrow 
H^0(P_i\times_B V_{\sigma\ast\lambda}(\sigma\uparrow\lambda))\\
&\rightarrow 
H^0(P_i\times_B \C_{\sigma\uparrow\lambda+\sigma_i\uparrow(\sigma\ast\lambda)})\otimes
W_{i,\sigma_i\sigma\ast\lambda}\rightarrow 0,
\end{align*}
\begin{align*}
0&\rightarrow 
H^1(P_i\times_B \C_{\sigma_i\sigma\uparrow\lambda})\otimes W_{i,\sigma_i\sigma\ast\lambda}
\\
&\rightarrow 
H^1(P_i\times_B V_{\sigma_i\sigma\ast \lambda}(\sigma_i\sigma\uparrow\lambda))
\\
&\rightarrow 
H^1(P_i\times_B \C_{\sigma_i\sigma\uparrow\lambda+\sigma_i\uparrow(\sigma_i\sigma\ast\lambda)})\otimes W_{i,\sigma\ast\lambda}\rightarrow 0.
\end{align*}
\end{lemma}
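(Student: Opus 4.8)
The plan is to obtain both sequences by feeding the twisted short exact sequence \eqref{the short exact sequence} into the long exact sequence of $H^\bullet(P_i\times_B-)$, once with base point $\sigma\ast\lambda$ and once with base point $\sigma_i\sigma\ast\lambda$. Since $P_i/B\simeq\mathbf{P}^1$ is one-dimensional, only $H^0$ and $H^1$ survive, so each long exact sequence has six terms. Because $W_{i,\sigma\ast\lambda}$ and $W_{i,\sigma_i\sigma\ast\lambda}$ are $P_i$-modules, Lemma \ref{Sug 114 115 lemma 4.5} identifies the cohomology of each sub- and quotient-term with $H^n(P_i\times_B\C_\mu)\otimes W_{i,\bullet}$ for the appropriate twist $\mu$, and this already matches every outer term appearing in the two asserted sequences. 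What then remains is to show that the relevant connecting maps vanish, which is purely a question about the sign of the pairings $(\mu,\alpha_i^\vee)$ read off from the dimension formula in Lemma \ref{Sug 114 115 lemma 4.5}.

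First I would record the key inequality: the length hypothesis is precisely the condition under which axiom \eqref{length and carry-over} forces $a:=(\sigma\uparrow\lambda,\alpha_i^\vee)\ge 0$. Next, since $\sigma\ast\lambda\notin\Lambda^{\sigma_i}$ — and likewise $\sigma_i\sigma\ast\lambda\notin\Lambda^{\sigma_i}$, as these two are exchanged by $\sigma_i$ — axiom \eqref{-1 property of carry-over} gives that both $(\sigma_i\uparrow(\sigma\ast\lambda),\alpha_i^\vee)$ and $(\sigma_i\uparrow(\sigma_i\sigma\ast\lambda),\alpha_i^\vee)$ equal $-1$. Combining this with the cocycle axiom \eqref{associativity of carry-over} in the form $\sigma_i\sigma\uparrow\lambda=\sigma_i(\sigma\uparrow\lambda)+\sigma_i\uparrow(\sigma\ast\lambda)$ together with $(\sigma_i\mu,\alpha_i^\vee)=-(\mu,\alpha_i^\vee)$ yields $(\sigma_i\sigma\uparrow\lambda,\alpha_i^\vee)=-a-1$, and hence $(\sigma_i\sigma\uparrow\lambda+\sigma_i\uparrow(\sigma_i\sigma\ast\lambda),\alpha_i^\vee)=-a-2$.

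For the first sequence, take \eqref{the short exact sequence} with $\lambda$ replaced by $\sigma\ast\lambda$ and $\mu=\sigma\uparrow\lambda$, whose quotient term is $W_{i,\sigma_i\sigma\ast\lambda}(\sigma\uparrow\lambda+\sigma_i\uparrow(\sigma\ast\lambda))$. In the associated long exact sequence the term $H^1(P_i\times_B\C_{\sigma\uparrow\lambda})\otimes W_{i,\sigma\ast\lambda}$ vanishes by Lemma \ref{Sug 114 115 lemma 4.5} because $a\ge 0$; hence the connecting map out of the $H^0$ of the quotient is zero and the $H^0$-row is exactly the asserted first sequence. For the second sequence, take \eqref{the short exact sequence} with $\lambda$ replaced by $\sigma_i\sigma\ast\lambda$ and $\mu=\sigma_i\sigma\uparrow\lambda$, using $\sigma_i\ast(\sigma_i\sigma\ast\lambda)=\sigma\ast\lambda$ to identify the quotient as $W_{i,\sigma\ast\lambda}(\sigma_i\sigma\uparrow\lambda+\sigma_i\uparrow(\sigma_i\sigma\ast\lambda))$. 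Now the two $H^0$-terms $H^0(P_i\times_B\C_{\sigma_i\sigma\uparrow\lambda})$ and $H^0(P_i\times_B\C_{\sigma_i\sigma\uparrow\lambda+\sigma_i\uparrow(\sigma_i\sigma\ast\lambda)})$ vanish, since the pairings $-a-1$ and $-a-2$ are strictly negative; this sandwiches $H^0(P_i\times_BV_{\sigma_i\sigma\ast\lambda}(\sigma_i\sigma\uparrow\lambda))$ between two zeros and leaves precisely the $H^1$-row, which is the asserted second sequence.

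The computations are routine once the two base points are chosen correctly; the only points demanding care are the bookkeeping of the four twists and the verification that the length hypothesis delivers $(\sigma\uparrow\lambda,\alpha_i^\vee)\ge 0$ — so attention to the left-versus-right length convention in axiom \eqref{length and carry-over} is the main thing to get right. I expect the principal obstacle to be confirming that \emph{both} $H^0$-terms of the second long exact sequence vanish simultaneously, i.e. that the single hypothesis $a\ge 0$ propagates to strict negativity of both $-a-1$ and $-a-2$, since this is exactly what collapses each six-term sequence down to the desired three-term one.
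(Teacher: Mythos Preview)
Your proposal is correct and follows essentially the same route as the paper: apply $H^\bullet(P_i\times_B-)$ to \eqref{the short exact sequence} at the two base points $(\sigma\ast\lambda,\sigma\uparrow\lambda)$ and $(\sigma_i\sigma\ast\lambda,\sigma_i\sigma\uparrow\lambda)$, use Lemma~\ref{Sug 114 115 lemma 4.5} to rewrite the outer terms, and kill the connecting maps via the sign of $(\mu,\alpha_i^\vee)$ deduced from axioms \eqref{associativity of carry-over}--\eqref{-1 property of carry-over}. Your computation of the four pairings $a,\,a-1,\,-a-1,\,-a-2$ is exactly what the paper leaves implicit in the phrase ``immediately follow from the assumption and axioms''; you even prove slightly more than needed for the second sequence (vanishing of the sub-$H^0$ term as well as the quotient-$H^0$), but only the latter is required to break the long exact sequence at the right spot.
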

\proof
Let us apply the cohomology functor
$H^\bullet(P_i\times_B\text{-})$ to \eqref{the short exact sequence} with 
\begin{align*}
(\lambda,\mu)\mapsto(\sigma\ast\lambda, \sigma\uparrow\lambda),\quad (\lambda,\mu)\mapsto (\sigma_i\sigma\ast\lambda, (\sigma_i\sigma)\uparrow\lambda)
\end{align*}
respectively. 
Then the assertions follow from Lemma \ref{Sug 114 115 lemma 4.5} if we have 
\begin{align*}
H^1(P_i\times_B\C_{\sigma\uparrow\lambda})=
H^0(P_i\times_B\C_{\sigma_i\sigma\uparrow\lambda+\sigma_i\uparrow(\sigma_i\sigma\ast\lambda)})=0.
\end{align*}
By Lemma \ref{Sug 114 115 lemma 4.5} again, it suffices to show that
\begin{align*}
(\sigma\uparrow\lambda,\alpha_i^\vee)\geq 0,\ 
(\sigma_i\sigma\uparrow\lambda+\sigma_i\uparrow(\sigma_i\sigma\ast\lambda),\alpha_i^\vee)<0.
\end{align*}
They immediately follow from the assumption and axioms in Definition \ref{Def FT data}.
\endproof

\begin{theorem}\cite[Theorem 4.8]{Sug}\label{Sug-4.8-4.11}
Let us assume that $V_{\lambda}$ is a $P_i$-module if $\lambda\in\Lambda^{\sigma_i}$.
For $\lambda\in \Lambda$ and $\sigma\in W$ satisfying $\ell(\sigma_i\sigma)=\ell(\sigma)+1$ and $(\sigma\uparrow\lambda,\alpha_i^\vee)=0$, we have an isomorphism
\begin{align}\label{coh isom}
H^n(G\times_B V_{\sigma\ast \lambda}(\sigma\uparrow\lambda))\simeq H^{n+1}(G\times_B V_{\sigma_i\sigma\ast \lambda}(\sigma_i\sigma\uparrow\lambda)).
\end{align}
In particular, Theorem \ref{main theorem of part 1}\eqref{main theorem 1(2)} is proved by applying this isomorphism repeatedly.
\end{theorem}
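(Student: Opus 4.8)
The plan is to realize both sides of \eqref{coh isom} through the $\mathbf{P}^1$-fibration $\pi_i\colon G/B\to G/P_i$ and to compare the two homogeneous bundles fiberwise. Writing $\mathcal{F}=G\times_B V_{\sigma\ast\lambda}(\sigma\uparrow\lambda)$ and $\mathcal{F}'=G\times_B V_{\sigma_i\sigma\ast\lambda}(\sigma_i\sigma\uparrow\lambda)$, the relative cohomology sheaves are the $G$-equivariant sheaves $R^{q}\pi_{i*}\mathcal{F}=G\times_{P_i}H^q(P_i\times_B V_{\sigma\ast\lambda}(\sigma\uparrow\lambda))$ on $G/P_i$, and similarly for $\mathcal{F}'$. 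Hence it suffices to establish the three fiberwise statements, all as $P_i$-modules:
\begin{align*}
&H^1(P_i\times_B V_{\sigma\ast\lambda}(\sigma\uparrow\lambda))=0,\qquad
H^0(P_i\times_B V_{\sigma_i\sigma\ast\lambda}(\sigma_i\sigma\uparrow\lambda))=0,\\
&H^0(P_i\times_B V_{\sigma\ast\lambda}(\sigma\uparrow\lambda))\simeq H^1(P_i\times_B V_{\sigma_i\sigma\ast\lambda}(\sigma_i\sigma\uparrow\lambda)).
\end{align*}
Once these hold, each Leray spectral sequence $H^p(G/P_i,R^{q}\pi_{i*}-)\Rightarrow H^{p+q}(G\times_B-)$ degenerates: for $\mathcal{F}$ the row $q=1$ vanishes, giving $H^n(G\times_B V_{\sigma\ast\lambda}(\sigma\uparrow\lambda))\simeq H^n(G/P_i,R^{0}\pi_{i*}\mathcal{F})$, while for $\mathcal{F}'$ the row $q=0$ vanishes, giving $H^{n+1}(G\times_B V_{\sigma_i\sigma\ast\lambda}(\sigma_i\sigma\uparrow\lambda))\simeq H^n(G/P_i,R^{1}\pi_{i*}\mathcal{F}')$; the third statement identifies the two middle sheaves and yields \eqref{coh isom}.

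For the weight bookkeeping I would first use axiom \eqref{associativity of carry-over} with $(\sigma\uparrow\lambda,\alpha_i^\vee)=0$ (so that $\sigma_i(\sigma\uparrow\lambda)=\sigma\uparrow\lambda$) to get $\sigma_i\sigma\uparrow\lambda=\sigma\uparrow\lambda+\sigma_i\uparrow(\sigma\ast\lambda)$. In the generic case $\sigma\ast\lambda\notin\Lambda^{\sigma_i}$, axiom \eqref{-1 property of carry-over} then gives $(\sigma_i\sigma\uparrow\lambda,\alpha_i^\vee)=-1$, and Remark \ref{rmk easy facts on FT data}\eqref{sum of pair shift relation} gives $\sigma_i\sigma\uparrow\lambda+\sigma_i\uparrow(\sigma_i\sigma\ast\lambda)=\sigma\uparrow\lambda-\alpha_i$. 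The two vanishings then follow by feeding the short exact sequence \eqref{the short exact sequence}, taken with $(\lambda,\mu)\mapsto(\sigma\ast\lambda,\sigma\uparrow\lambda)$ and with $(\lambda,\mu)\mapsto(\sigma_i\sigma\ast\lambda,\sigma_i\sigma\uparrow\lambda)$ respectively, into the long exact sequence for $H^{\bullet}(P_i\times_B-)$: in each case the two flanking terms are of the form $H^{\bullet}(P_i\times_B\C_\mu)\otimes W_{i,-}$ by Lemma \ref{Sug 114 115 lemma 4.5}, and they vanish because the relevant pairings $(\mu,\alpha_i^\vee)$ lie in $\{0,-1\}$ (killing $H^1$) or in $\{-1,-2\}$ (killing $H^0$).

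For the isomorphism I would invoke Lemma \ref{Sug-4.10}. In its first sequence the right-hand term $H^0(P_i\times_B\C_{\sigma_i\sigma\uparrow\lambda})\otimes W_{i,\sigma_i\sigma\ast\lambda}$ vanishes since $(\sigma_i\sigma\uparrow\lambda,\alpha_i^\vee)=-1<0$, so $H^0(P_i\times_B V_{\sigma\ast\lambda}(\sigma\uparrow\lambda))\simeq H^0(P_i\times_B\C_{\sigma\uparrow\lambda})\otimes W_{i,\sigma\ast\lambda}$; in its second sequence the left-hand term vanishes for the same reason, so $H^1(P_i\times_B V_{\sigma_i\sigma\ast\lambda}(\sigma_i\sigma\uparrow\lambda))\simeq H^1(P_i\times_B\C_{\sigma\uparrow\lambda-\alpha_i})\otimes W_{i,\sigma\ast\lambda}$. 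Since $(\sigma\uparrow\lambda+\rho,\alpha_i^\vee)=1\geq 0$ and $\sigma_i(\sigma\uparrow\lambda+\rho)-\rho=\sigma\uparrow\lambda-\alpha_i$, the last clause of Lemma \ref{Sug 114 115 lemma 4.5} provides a $P_i$-module isomorphism $H^0(P_i\times_B\C_{\sigma\uparrow\lambda})\simeq H^1(P_i\times_B\C_{\sigma\uparrow\lambda-\alpha_i})$, and tensoring with $W_{i,\sigma\ast\lambda}$ gives the desired identification, which is $P_i$-equivariant and hence globalizes to $R^{0}\pi_{i*}\mathcal{F}\simeq R^{1}\pi_{i*}\mathcal{F}'$ on $G/P_i$. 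In the remaining case $\sigma\ast\lambda\in\Lambda^{\sigma_i}$ the module $V_{\sigma\ast\lambda}$ is already a $P_i$-module and $\sigma_i\sigma\ast\lambda=\sigma\ast\lambda$, $\sigma_i\sigma\uparrow\lambda=\sigma\uparrow\lambda-\alpha_i$; here Lemma \ref{Sug-4.10} is replaced by a direct application of Lemma \ref{Sug 114 115 lemma 4.5} with $M=V_{\sigma\ast\lambda}$ and $W_{i,\sigma\ast\lambda}=V_{\sigma\ast\lambda}$, and the same three statements drop out verbatim.

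Finally, the ``in particular'' follows by iterating \eqref{coh isom} along the given minimal expression. Putting $\sigma^{(m)}=\sigma_{i_m}\cdots\sigma_{i_1}$ (so $\sigma^{(0)}=\operatorname{id}$ and $\sigma^{(N)}=w_0$ with $N=\ell(w_0)$), minimality gives $\ell(\sigma_{i_{m+1}}\sigma^{(m)})=\ell(\sigma^{(m)})+1$ and the strong condition \eqref{strong condition} gives $(\sigma^{(m)}\uparrow\lambda,\alpha_{i_{m+1}}^\vee)=0$, so \eqref{coh isom} applies at each step with $(\sigma,i)=(\sigma^{(m)},i_{m+1})$; starting from $\operatorname{id}\uparrow\lambda=0$, hence $V_{\sigma^{(0)}\ast\lambda}(\sigma^{(0)}\uparrow\lambda)=V_\lambda$, and chaining the $N$ isomorphisms yields $H^n(G\times_B V_\lambda)\simeq H^{n+\ell(w_0)}(G\times_B V_{w_0\ast\lambda}(w_0\uparrow\lambda))$. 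The genuinely delicate point is not the homological algebra but the verification, purely from the shift-map axioms \eqref{associativity of carry-over}, \eqref{-1 property of carry-over} and Remark \ref{rmk easy facts on FT data}\eqref{sum of pair shift relation}, that the Cartan pairings land exactly on the wall or just beyond it so as to force the precise vanishing pattern (one of $R^0,R^1$ zero on each side); keeping every identification $P_i$-equivariant so that it descends to $G/P_i$ is the second thing to watch.
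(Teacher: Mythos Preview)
Your argument is correct and follows essentially the same route as the paper: reduce to the fiber $P_i/B\simeq\mathbf{P}^1$, use the short exact sequences of Lemma \ref{Sug-4.10} (equivalently, the long exact sequence for \eqref{the short exact sequence}) together with the numerics from Lemma \ref{Sug 114 115 lemma 4.5} to see that $H^\bullet(P_i\times_B V_{\sigma\ast\lambda}(\sigma\uparrow\lambda))$ and $H^\bullet(P_i\times_B V_{\sigma_i\sigma\ast\lambda}(\sigma_i\sigma\uparrow\lambda))$ are both concentrated in a single degree (shifted by one) and isomorphic to $W_{i,\sigma\ast\lambda}(\sigma\uparrow\lambda)$, then conclude via the degenerating Leray spectral sequences. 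The only cosmetic difference is that the paper packages both the vanishings and the isomorphism into the single statement $H^a\simeq\delta_{a,0}W_{i,\sigma\ast\lambda}(\sigma\uparrow\lambda)\simeq H^b$ for $(a,b)=(0,1),(1,0)$, whereas you separate the vanishing verification from the identification via the last clause of Lemma \ref{Sug 114 115 lemma 4.5}.
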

\proof
First we consider the case $\sigma\ast\lambda\not\in\Lambda^{\sigma_i}$.
By the axiom \eqref{-1 property of carry-over} and the assumption $(\sigma\uparrow\lambda,\alpha_i^\vee)=0$, we have 
$(\sigma\uparrow\lambda+\sigma_i\uparrow(\sigma\ast \lambda),\alpha_i^\vee)=-1$.
Therefore, by Lemma \ref{Sug 114 115 lemma 4.5}, we have  
\begin{align}\label{Sug-132}
H^n(P_i\times_B \C_{\sigma\uparrow\lambda})\simeq \delta_{n,0}\C_{\sigma\uparrow\lambda},\quad H^n(P_i\times_B \C_{\sigma\uparrow\lambda+\sigma_i\uparrow(\sigma\ast\lambda)})=0
\end{align}
and by applying \eqref{Sug-132} to the first short exact sequence in Lemma \ref{Sug-4.10}, we have
\begin{align}\label{Sug-133-1}
H^n(P_i\times_B V_{\sigma\ast\lambda}(\sigma\uparrow\lambda))
\simeq 
\delta_{n,0}W_{i,\sigma\ast\lambda}(\sigma\uparrow\lambda).
\end{align}
By the assumption $\sigma\ast\lambda\not\in\Lambda^{\sigma_i}$ 
(and thus, $\sigma_i\sigma\ast\lambda\not\in\Lambda^{\sigma_i}$)
and axiom \eqref{-1 property of carry-over}, we have
\begin{align}\label{pre-Sug-131}
\sigma_i\sigma\uparrow\lambda=\sigma_i\circ(\sigma\uparrow\lambda+\sigma_i\uparrow(\sigma\ast\lambda)),
~
\sigma\uparrow\lambda=\sigma_i\circ(\sigma_i\sigma\uparrow\lambda+\sigma_i\uparrow(\sigma_i\sigma\ast\lambda)).
\end{align}
By applying Lemma \ref{Sug 114 115 lemma 4.5} to \eqref{pre-Sug-131}, we obtain that
\begin{align}\label{pre-Sug-133-2}
H^n(P_i\times_B \C_{\sigma_i\sigma\uparrow\lambda})=0,
~H^n(P_i\times_B \C_{\sigma_i\sigma\uparrow\lambda+\sigma_i\uparrow(\sigma_i\sigma\ast\lambda)})\simeq \delta_{n,1} \C_{\sigma\uparrow\lambda},
\end{align}
where the first one follows from the assumption $(\sigma\uparrow\lambda,\alpha_i^\vee)=0$ and \eqref{-1 property of carry-over},
and the second one follows from  $(\sigma\uparrow\lambda,\alpha_i^\vee)=0$ and Lemma \ref{Sug 114 115 lemma 4.5}.
Hence, by combining the second short exact sequence with \eqref{pre-Sug-133-2}, we obtain
\begin{align}\label{Sug-133-2}
H^n(P_i\times_B V_{\sigma_i\sigma\ast \lambda}(\sigma_i\sigma\uparrow\lambda))\simeq \delta_{n,1}W_{i,\sigma\ast\lambda}(\sigma\uparrow\lambda).
\end{align}
By combining \eqref{Sug-133-1} and \eqref{Sug-133-2}, we obtain
\begin{align}\label{Sug-133}
H^a(P_i\times_B V_{\sigma\ast \lambda}(\sigma\uparrow\lambda))\simeq \delta_{a,0}W_{i,\sigma\ast\lambda}(\sigma\uparrow\lambda) \simeq H^b(P_i\times_B V_{\sigma_i\sigma\ast \lambda}(\sigma_i\sigma\uparrow\lambda))
\end{align}
for $(a,b)=(0,1)$, $(1,0)$. 
Now, we have the Leray spectral sequences 
\begin{align*}
&E_2^{a,b}=H^a(G \times_{P_i}H^b(P_i\times_B V_{\sigma\ast \lambda}(\sigma\uparrow\lambda)))\Rightarrow H^{a+b}(G \times_B V_{\sigma\ast \lambda}(\sigma\uparrow\lambda)),\\
&E_2^{a,b}=H^a(G \times_{P_i}H^b(P_i\times_B V_{\sigma_i\sigma\ast \lambda}(\sigma_i\sigma\uparrow\lambda)))\Rightarrow H^{a+b}(G \times_B V_{\sigma_i\sigma\ast \lambda}(\sigma_i\sigma\uparrow\lambda)),
\end{align*}
which differ by the 1-shift for $b$ by \eqref{Sug-133}.
Thus, we obtain the assertion.

Second, we consider the case where $\sigma\ast\lambda\in\Lambda^{\sigma_i}$.
By the axiom \eqref{-1 property of carry-over}, we have 
$\sigma_i\sigma\uparrow\lambda=\sigma_i\circ(\sigma\uparrow\lambda)$.
By combining Lemma \ref{Sug 114 115 lemma 4.5} with the assumption that $V_{\sigma\ast\lambda}$ (and thus, $V_{\sigma_i\sigma\ast\lambda}$) is a $P_i$-module, we have
\begin{align}\label{Sug-136}
\begin{split}
H^a(P_i\times_B V_{\sigma\ast \lambda}(\sigma\uparrow\lambda))
&\simeq H^a(P_i\times_B \C_{\sigma\uparrow\lambda})\otimes V_{\sigma\ast \lambda}\\
&\simeq H^b(P_i\times_B \C_{\sigma_i\sigma\uparrow\lambda})\otimes V_{\sigma_i\sigma\ast \lambda}\\
&\simeq H^b(P_i\times_B V_{\sigma_i\sigma\ast \lambda}(\sigma_i\sigma\uparrow\lambda))
\end{split}
\end{align}
for $(a,b)=(0,1)$, $(1,0)$. 
By applying the assumption $(\sigma\uparrow\lambda,\alpha_i^\vee)=0$ and Lemma \ref{Sug 114 115 lemma 4.5} to \eqref{Sug-136}, we have
\begin{align}\label{Sug-138}
H^1(P_i\times_B V_{\sigma\ast \lambda}(\sigma\uparrow\lambda))
\simeq H^0(P_i\times_B V_{\sigma_i\sigma\ast \lambda}(\sigma_i\sigma\uparrow\lambda))\simeq 0.
\end{align}
By combining the Leray spectral sequences above with \eqref{Sug-136} and \eqref{Sug-138}, we obtain the assertion.
\endproof

\subsection{Weyl-type character formula}
\label{subsection: Weyl-type character formula}
For a graded vector space $V=\bigoplus_{\Delta}V_\Delta$, $\dim_\C V_\Delta<\infty$, denote 
\begin{align*}
\ch_q V=\sum_{\Delta}\dim_\C V_\Delta q^\Delta
\end{align*}
the character
(where $q$ and $\Delta$ can take multiple variables, i.e. $q=(q_i)_{1\leq i\leq n}$, $\Delta=(\Delta_i)_{1\leq i\leq n}$,  $q^\Delta=\Pi_{i=1}^nq^{\Delta_i}$).
In the same manner as \cite[Section 4.4]{Sug}, by combining the cohomology vanishing\footnote{
Note that Kempf's vanishing theorem \cite[II,4]{Jan} is proved under a weaker condition than BWB theorem \cite[II,5]{Jan}. 
The same situation might hold in our case, but the verification of this is future work (see also the footnote in Section \ref{section: simplicity revisit}).
}
$H^{n>0}(G\times_BV_\lambda)=0$ and the Atiyah--Bott localization formula \cite{AB}, we obtain the Weyl-type character formula of the FT construction $H^0(G\times_BV_\lambda)$.
An application of the case $n=0$ in Theorem \ref{main theorem of part 1}\eqref{main theorem 1(2)} will be discussed in Section \ref{section: simplicity revisit}.
\begin{corollary}\label{Sug2-3-15}
For $\lambda\in \Lambda$ satisfying the condition \eqref{strong condition}, we have
\begin{align*}
\ch_qH^0(G\times_BV_\lambda)
&=
\sum_{\beta\in P_+}\dim L_\beta
\sum_{\sigma\in W}(-1)^{l(\sigma)}
\ch_qV_\lambda^{h=\sigma\circ\beta}\\
&=
\sum_{\beta\in P_+}\dim L_\beta
\sum_{\sigma\in W}(-1)^{l(\sigma)}
\ch_qV_{\sigma\ast\lambda}^{h=\beta-\sigma\uparrow\lambda}
\end{align*}
In particular, under the decomposition \eqref{G-module decomposition of FT}, we have
\begin{align}
\label{Weyl type character formula for multiplicity W}
\ch_q\mathcal{W}_{-\alpha+\lambda}
=
\sum_{\sigma\in W}(-1)^{l(\sigma)}\ch_qV_\lambda^{h=\sigma\circ(\alpha+\lambda^\bullet)}
=
\sum_{\sigma\in W}(-1)^{l(\sigma)}\ch_qV_{\sigma\ast\lambda}^{h=\alpha+\lambda^\bullet-\sigma\uparrow\lambda}
\end{align}
\end{corollary}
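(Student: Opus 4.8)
The plan is to combine the vanishing of higher cohomology under the strong condition with the Atiyah--Bott (equivariant) localization formula, exactly as in \cite[Section 4.4]{Sug}. First I would invoke Theorem \ref{Sug-4.8-4.11}: under the strong condition \eqref{strong condition} the cohomology of $G\times_BV_\lambda$ is concentrated in degree $0$, so the graded Euler characteristic equals the honest character,
\begin{align*}
\ch_qH^0(G\times_BV_\lambda)
=
\sum_{n\geq 0}(-1)^n\ch_qH^n(G\times_BV_\lambda).
\end{align*}
This reduces the problem to computing the equivariant Euler characteristic, which is purely cohomological and insensitive to extension data, so the $B$-module filtration of $V_\lambda$ by its conformal grading $V_\lambda=\bigoplus_\Delta V_{\lambda,\Delta}$ (each a finite-dimensional weight $B$-module by Definition \ref{Def FT data}\eqref{conformal grading of FT data}) lets me work one graded piece at a time and reduce to line bundles $\mathcal{O}(\mu)$ over $G/B$.

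Next I would apply the Atiyah--Bott localization formula \cite{AB} to each weight line bundle $G\times_B\C_\mu$. The fixed points of the maximal torus acting on $G/B$ are indexed by the Weyl group $W$, and localization expresses the equivariant Euler characteristic as a sum over $\sigma\in W$ of local contributions; collecting the denominators produces the Weyl character $\sum_{\sigma\in W}(-1)^{l(\sigma)}$ together with the shift $\sigma\circ\beta=\sigma(\beta+\rho)-\rho$ in the dotted action. Summing over the weight spaces of $V_\lambda$ and organizing by dominant weights $\beta\in P_+$ with multiplicity $\dim L_\beta$ yields the first equality
\begin{align*}
\ch_qH^0(G\times_BV_\lambda)
=
\sum_{\beta\in P_+}\dim L_\beta
\sum_{\sigma\in W}(-1)^{l(\sigma)}\ch_qV_\lambda^{h=\sigma\circ\beta}.
\end{align*}
The second equality is then a bookkeeping identity: using the shift map to rewrite $\ch_qV_\lambda^{h=\sigma\circ\beta}=\ch_qV_{\sigma\ast\lambda}^{h=\beta-\sigma\uparrow\lambda}$, which follows from the definition of $\uparrow$ together with the twist in \eqref{the short exact sequence}; here the strong condition \eqref{strong condition} (in its reformulation \eqref{another form of strong condition}) guarantees the shifts add up compatibly along a reduced expression.

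Finally, the statement \eqref{Weyl type character formula for multiplicity W} for $\mathcal{W}_{-\alpha+\lambda}$ comes by comparing the two sides of the $G$-module decomposition \eqref{G-module decomposition of FT}: the left-hand side has character $\sum_{\alpha}(\ch_qL_\alpha)(\ch_q\mathcal{W}_{-\alpha+\lambda})$, while the right-hand side is the character just computed. Extracting the multiplicity of the highest (equivalently, a fixed) weight vector in each $L_\beta$ isolates $\ch_q\mathcal{W}_{-\alpha+\lambda}$ and gives the claimed formula with $\beta$ replaced by $\alpha+\lambda^\bullet$. The step I expect to require the most care is the precise accounting in the localization sum --- matching the denominators of the Atiyah--Bott formula with the $\rho$-shift so that $\sigma\circ\beta$ (rather than $\sigma(\beta)$) appears, and verifying that the grading twists $\sigma\uparrow\lambda$ assemble correctly so that the two expressions for $\ch_q\mathcal{W}_{-\alpha+\lambda}$ coincide; everything else is a direct transcription of \cite[Section 4.4]{Sug} into the shift-system language.
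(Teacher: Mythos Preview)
Your approach matches the paper's: vanishing via Theorem \ref{Sug-4.8-4.11} plus Atiyah--Bott for the first equality, then a weight-space identity from the short exact sequence for the second, then read off the $\mathcal{W}$-piece from the decomposition.

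Two small corrections. The termwise identity you wrote is not quite what the paper proves: iterating the rank-$1$ step moves from $V_\lambda$ to $V_{\sigma_{i_n}\ast\lambda}$ to $V_{\sigma_{i_{n-1}}\sigma_{i_n}\ast\lambda}$ and so on, ending at $V_{\sigma^{-1}\ast\lambda}$, so the correct identity is $\ch_qV_\lambda^{h=\sigma\circ\beta}=\ch_qV_{\sigma^{-1}\ast\lambda}^{h=\beta-\sigma^{-1}\uparrow\lambda}$; one then re-indexes the Weyl sum using $l(\sigma)=l(\sigma^{-1})$. Second, the strong condition plays no role in this bookkeeping identity: the paper derives it for arbitrary $\lambda$ directly from the $P_i$-module structure of $W_{i,\lambda}$, $W_{i,\sigma_i\ast\lambda}$ (which makes their weight-space characters $\sigma_i$-invariant), the relation $\sigma_i\uparrow\lambda+\sigma_i\uparrow(\sigma_i\ast\lambda)=-\alpha_i$ from Remark \ref{rmk easy facts on FT data}\eqref{sum of pair shift relation}, and the general expansion of $\sigma\uparrow\lambda$ in Remark \ref{rmk easy facts on FT data}\eqref{independence of minimal exapression?}. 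The strong condition is used only to ensure $H^{n>0}=0$, hence only for the first equality.
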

\begin{proof}
As $l(\sigma)=l(\sigma^{-1})$, it suffices to check that $\ch_qV_\lambda^{h=\sigma\circ\beta}
=\ch_qV_{\sigma^{-1}\ast\lambda}^{h=\beta-\sigma^{-1}\uparrow\lambda}$ for $\beta\in P$.
First let us consider the case $\sigma=\sigma_i$.
By applying the short exact sequence in Definition \ref{Def FT data}\eqref{FT data 3}, we have
\begin{align*}
\ch_qV_\lambda^{h=\sigma_{i}\circ\beta}
=
&\ch_qW_{i,\lambda}^{h=\sigma_{i}\circ\beta}
+
\ch_qW_{i,\sigma_i\ast\lambda}^{h=\sigma_{i}\circ\beta-\sigma_i\uparrow\lambda}\\
=
&\ch_qW_{i,\lambda}^{h=\sigma_i(\sigma_{i}\circ\beta)}
+
\ch_qW_{i,\sigma_i\ast\lambda}^{h=\sigma_i(\sigma_{i}\circ\beta-\sigma_i\uparrow\lambda)}\\
=
&\ch_qW_{i,\sigma_i\ast\lambda}^{h=\beta-\sigma_i\uparrow\lambda}
+
\ch_qW_{i,\lambda}^{h=\beta-\alpha_i=(\beta-\sigma_i\uparrow\lambda)-\sigma_i\uparrow(\sigma_i\ast\lambda)}
=\ch_qV_{\sigma_{i}\ast\lambda}^{h=\beta-\sigma_{i}\uparrow\lambda},
\end{align*}
where the second equality follows from the fact that $W_{i,\lambda}$ and $W_{i,\sigma_i\ast\lambda}$ are $P_i$-modules, and the third one follows from the axiom Definition
\ref{Def FT data}\eqref{FT data 2} and Remark \ref{rmk easy facts on FT data}\eqref{sum of pair shift relation}.
Let us take a minimal expression $\sigma=\sigma_{i_n}\cdots\sigma_{i_1}$.
By applying the same relations with respect to $\sigma_{i_1},\sigma_{i_2},\dots$ repeatedly, we have
\begin{align*}
\ch_qV_{\lambda}
^{h=\sigma\circ\beta=\sigma_{i_n}\circ(\sigma_{i_{n-1}}\cdots\sigma_{i_1}\circ\beta)}
=
&\ch_qV_{\sigma_{i_n}\ast\lambda}
^{h=\sigma_{i_{n-1}}\cdots\sigma_{i_1}\circ\beta-\sigma_{i_n}\uparrow\lambda}\\
=
&\ch_qV_{\sigma_{i_{n-1}}\sigma_{i_n}\ast\lambda}
^{h=\sigma_{i_{n-2}}\cdots\sigma_{i_1}\circ\beta-\sigma_{i_n}\uparrow\lambda-\sigma_{i_{n-1}}\uparrow(\sigma_{i_n}\ast\lambda)}\\
=
&\cdots\\
=
&\ch_qV_{\sigma^{-1}\ast\lambda}
^{h=\beta-\sum_{j=1}^{n}\sigma_{n+1-j}\uparrow(\sigma_{n+2-j}\cdots\sigma_{i_{n+1}}\ast\lambda)}
\end{align*}
where $\sigma_{i_{n+1}}:=\operatorname{id}$.
By Remark \ref{rmk easy facts on FT data}\eqref{independence of minimal exapression?}, the power of the most right-hand side is $\beta-\sigma^{-1}\uparrow\lambda$.
\end{proof}


\part{Application to vertex operator superalgebras}\label{part 2}
In Part \ref{part 2}, we consider several free-field algebras and check the axioms of shift system (Definition \ref{Def FT data}). 
It enables us to use the results in Part \ref{part 1} for the study of the corresponding new multiplet W-(super)algebras.

\section{Shift systems and free field algebras}
\subsection{Preliminary from VOSA}
First, let us introduce the vertex operator superalgebra and some basic notation.
Let $V$ be a superspace, i.e., a $\mathbb{Z}_{2}$-graded vector space $V=V_{\overline{0}} \oplus V_{\overline{1}}$, where $\{ {\overline{0}}, \overline{1} \}=\mathbb{Z}_2$. 
For $a\in V$, we say that the element $a$ has parity $p(a)\in \mathbb{Z}_2$ if $a\in V_{p(a)}$.
A field $a(z)$ is a formal series of the form 
\begin{align*}
a(z)=\displaystyle\sum_{n\in \mathbb{Z}}a_{(n)}z^{-n-1},\ 
\ 
a_{(n)}\in \operatorname{End}(V)
\end{align*}
such that for any $v\in V$, one has $a_{(n)}v =0$ for some $n\gg 0$.
\begin{definition}
A \textit{vertex superalgebra (VSA)} refers to a quadruple 
\begin{align*}
(V, \mathbf{1}, T, Y(-,z)),
\end{align*}
where
$V=V_{\bar{0}}\oplus V_{\bar{1}}$ is a superspace (\textit{state space}),
$\mathbf{1}\in V_{\overline{0}}$ (\textit{vacuum vector}),
$T\in\operatorname{End}(V)$ (\textit{derivation}), 
and 
\begin{align*}
Y(-,z)\in\operatorname{End}(V)[[z^{\pm 1}]],\ \ 
Y(a,z)=:a(z)=\sum_{n\in \mathbb{Z}} a_{(n)}z^{-n-1}
\end{align*}
(the \textit{state-field correspondence})
satisfying the following axioms:
\begin{itemize}
\item For any $a\in V$, $a(z)$ is a field,
\item \textit{(translation coinvariance):} $[T,a(z)]=\partial a(z)$,
\item \textit{(vacuum):} $\mathbf{1}(z)=Id_{V}$, $a(z)\mathbf{1} |_{z=0}=a,$
\item \textit{(locality):} $(z-w)^{N}a(z)b(w)=(-1)^{p(a)p(b)}(z-w)^{N}b(w)a(z)$
    for $N\gg 0$.
\end{itemize}
By abuse of notation, we simply write $V$ for the corresponding VSA.
Other representation theoretic concepts (e.g., homomorphism, module,...) are defined in a common way, and we omit them (for more detail, see \cite{FB}).
For a VSA $V$, the \textit{parity automorphism} $\iota_V\in\operatorname{Aut}(V)$ is defined by $\iota_V(a)=p(a)a$.
Note that $\iota_{V_1\otimes V_2}=\iota_{V_1}\otimes \iota_{V_2}$.
We abbreviate $\iota_V$ to $\iota$ if $V$ is clear.
\end{definition}

Note that from the locality axiom, one can derive the following commutator formula
\begin{align*}
    a_{(n)}b(z)=b(z)a_{n}+\sum_{i\geq 0}\binom{n}{i}z^{n-i}(a_{(i)}b)(z).
\end{align*}
We call $a_{(0)}$ a \textit{zero-mode}. 
By the commutator formula,
zero-mode is a derivation: 
\begin{align}\label{eqref: zero mode is derivation}
[a_{(0)},b_{(n)}]=(a_{(0)}b)_{(n)}.
\end{align}

\begin{definition}
A VSA $V$ is called a $\tfrac{1}{2}\mathbb{Z}$-graded \textit{vertex operator superalgebra (VOSA)} if it is $\frac12 \mathbb{Z}$-graded 
$V=\oplus_{n \in \frac12 \mathbb{Z}} V_n$
and has a 
\textit{conformal vector} $\omega\in V_2$ 
such that $\omega_{(0)}=T$ and the set of operators $\{L_{n}:=\omega_{(n+1)}\}_{n\in \mathbb{Z}}$ 
satisfy the relation of the Virasoro algebra
\begin{align}\label{viro1}
[L_{m},L_{n}]=(m-n)L_{m+n}+\tfrac{m^{3}-m}{12}\delta_{m+n,0}c_{V},\ \ 
[L_n,c_{V}]=0,\ \ 
(m,n\in\Z).
\end{align}
Weak $V$-modules for a vertex operator superalgebra~$V$ are defined in complete parallel with the non-super case, except that $M$ carries a $\mathbb{Z}_2$-grading compatible with the grading on~$V$ and with the module action, and all identities (in particular the module Jacobi identity) are taken in the super sense.
A \(V\)-module \(M\) is called \textit{graded} if there exist elements \(a^1,\dots,a^n\in V\) such that
$[L_0,a^i_{(0)}]=0$ for all $i=1,\dots,n,$
the operators $L_0,\ a^1_{(0)},\dots,a^n_{(0)}$
act semisimply on \(M\), and every simultaneous eigenspace for these operators is finite-dimensional. The \textit{conformal weight} of a homogeneous vector $v$ is the eigenvalue $\Delta_v$ in $L_0 v=\Delta_v v$. In particular, we let $\Delta_{a^i}=1$ and $L_1a^i=0$ for each $1\leq i\leq n$. Therefore, we have the following decomposition:
\[
  M=\bigoplus_{\Delta,x_1,\ldots,x_n} M_{\Delta,x_1,\ldots,x_n},\]
where 
 \[M_{\Delta,x_1,\ldots,x_n}
  :=\{m\in M : L_0m=\Delta m,\ a^i_{(0)}m=x_i m\ (i=1,\ldots,n)\}.
\]
Compared with the standard notation that grades $M$ by the $L_0$-eigenvalue alone, we carry the additional labels $(x_1,\ldots,x_n)$ encoding eigenvalues of the zero modes $a^i_{(0)}$.
The \textit{character} $\ch_{q,z_1,\dots,z_n}M$ (resp. \textit{supercharacter} $\sch_{q,z_1,\dots,z_n}M$) of $M$ is defined by
\begin{align*}
&\ch_{q,z_1,\dots,z_n}M=\operatorname{tr}_{M}q^{L_0-\tfrac{c}{24}}z_1^{a^1_{(0)}}\cdots z_n^{a^n_{(0)}},
\\
&(\text{resp.\ 
$\sch_{q,z_1,\dots,z_n}M
=\sum_{x\in\{0,1\}}(-1)^x
\operatorname{tr}_{M_{\bar{x}}}q^{L_0-\tfrac{c}{24}}z_1^{a^1_{(0)}}\cdots z_n^{a^n_{(0)}}$})
\end{align*}
A graded VOSA-module $M$ is a \textit{cyclic} $V$-module,
if there exists $m\in M_{\Delta,x_1,\dots,x_n}$ for some $\Delta,x_1,\dots,x_n$ such that $M_{\Delta,x_1,\dots,x_n}=\C m$ and $M=U(V)m$.
Note that a VOSA is always cyclic because $a=a_{(-1)}\mathbf{1}$.
If we do not consider $a^1,\dots,a^n$, we will call the grading \textit{conformal grading}.
After Section \ref{Section: free field algebra}, we only work with conformally-graded VOSAs and their modules.\footnote{On the other hand, for example in \cite{CNS}, we consider the grading by $(L_0,a^1_{(0)}:=h_{(0)})$ for $h\in\sll_2\subset\hat{\sll_2}$.}
\end{definition}
If we allow the exponent of $z$ in field expansion to be a rational number, then one can define the \textit{generalized V(O)SA} similarly (see \cite{DL} for more details). 
Let us introduce the following two operations that create another VOSA module from a given one.
\begin{definition}\label{def: contragredient dual and shpectral flow twist}
Let $V$ be a VO(S)A and $M$ be a graded $V$-module.
\begin{enumerate}
\item\label{def: contragredient dual}
(\textit{Contragredient dual})
The \textit{contragredient dual} $M^\ast$ of $M$ is defined as the restricted dual of $M$, i.e., $M^\ast=\bigoplus_{\Delta,x_1,\dots,x_n}M_{\Delta,x_1,\dots,x_n}^*$ with the following VO(S)A-module structure \cite{FHL}:
\begin{align*}
\langle Y_{M^\ast}(v,z)m',m\rangle
=\langle m',Y_{M}(e^{zL_1}(-z^{-2})^{L_0}v,z^{-1})m\rangle
=:\langle m',Y_M(v,z)^\dagger m\rangle,
\end{align*}
where $v\in V$, $m\in M$, $m'\in M^\ast$, and $\langle m',m\rangle=m'(m)$. 
In other words,
\begin{align*}
v_{(n)}^\dagger m
=
\sum_{\Delta\in\Z}(-1)^\Delta
\sum_{m\geq 0}(\tfrac{L_1^m}{m!}v_\Delta)_{(-n-m-2\Delta-2)}
\end{align*}
for the conformal decomposition $v=\sum_{\Delta\in\Z}v_\Delta$, $v_\Delta\in V_\Delta$.
Direct calculation shows that  
\begin{align}
\langle m',L_nm\rangle=\langle L_{-n}m',m\rangle,
\ \ 
(a^i_{(0)})^\dagger=a^i_{(0)}.
\end{align}

\item\label{def: spectral flow twist}
(\textit{Twisted module and spectral flow twist})
For $g\in\operatorname{Aut}(V)$ with finite order $s\in\Z_{\geq 1}$,
we can define $g$-twisted modules of $V$,
which play an important role in orbifold conformal theory (see \cite{Li-tw} for more details). 
Let us give a procedure called \textit{spectral flow twist} which deforms a given twisted $V$-module.
Suppose that there exists $h\in V_{\overline{0}}$, called \textit{simple current element}, and some $k\in\C$ such that 
\begin{align*}
L_nh=\delta_{n,0}h\ (n\geq 0),
\quad 
h_{(n)}h=k\delta_{n,1}\mathbf{1}, 
\quad 
\mathrm{Spec}(h_{(0)})\in \tfrac{1}{s}\Z,
\end{align*}
where $h_{(0)}$ acts semisimplely on $V$ and $\mathrm{Spec}(h_{(0)})$ refers to the set of eigenvalues of $h_{(0)}$. 
Then one can define \textit{Li's Delta operator} \cite{Li-ssel}, 
\begin{align*}
\Delta(z):=z^{h_{(0)}}\exp{\sum_{n=1}^{\infty}\tfrac{h_{(n)}}{-n}(-z)^{-n}}.
\end{align*}
Let $M$ be a $g$-twisted $V$-module. 
Then $(M,Y_M(\Delta(z)\cdot,z))$ is a $gg_h$-twisted $V$-module, called \textit{spectral flow twist} of $M$ and denote $\mathcal{S}_h(M)$, where $g_h=\exp(2\pi ih_{(0)})$ is an automorphism of $V$ with order $s$ (see \cite[Proposition 5.4]{Li-tw}). 
Note that if $M$ is irreducible, then so is $\mathcal{S}_h(M)$.
\end{enumerate}
\end{definition}

\subsection{Simplicity theorem}
\label{section: simplicity revisit}
This subsection will not be used until Section \ref{section: main results} and may be skipped on a first reading.
In this subsection, we formalize the discussion in \cite{Sug2} to enhance the perspective and flexibility of the discussion.\footnote{
However, the authors expect that there should be a more concise way leading to the simplicity theorem in our case (ideally, they would like to include this subsection in Part \ref{part 1}).
One reason for this belief is that Theorem \ref{main theorem of part 1}\eqref{main theorem 1(2)} has strong similarity to \cite[II,5]{Jan}, but in \cite[II,5]{Jan} the simplicity theorem is proved as a direct corollary of the BWB theorem (i.e., no need to go through Kazhdan--Lusztig polynomials, etc.). 
It is future work to study and develop the theory of shift system independently of VOA.
}
Throughout this subsection, $(\Lambda,\uparrow,\{V_\lambda\}_{\lambda\in\Lambda})$ is a shift system.
\begin{lemma}\cite[Lemma 2.27, Corollary 3.3]{Sug2}\label{Sug2-3-2}
For $\lambda,\lambda'\in\Lambda$ satisfying the condition \eqref{strong condition}, if 
\begin{align*}
V_\lambda^\ast\simeq V_{w_0\ast\lambda'}(-w_0(w_0\uparrow\lambda')),\ 
w_0\uparrow\lambda-w_0(w_0\uparrow\lambda')=-2\rho
\end{align*}
(where $V_\lambda^\ast$ is the contragredient dual of $V_\lambda$ with the contragredient $B$-action),
then we have a natural isomorphism
\begin{align*}
H^0(G\times_BV_\lambda)
\simeq
H^0(G\times_BV_{\lambda'})^\ast.
\end{align*}
In particular, if $\lambda=\lambda'$, then $H^0(G\times_BV_\lambda)$ is self-dual.
\end{lemma}
\proof
By Theorem \ref{main theorem of part 1}\eqref{main theorem 1(1)} and the Serre duality, we have
\begin{align*}
H^0(G\times_BV_\lambda)
\simeq
H^{l(w_0)}(G\times_BV_{w_0\ast\lambda}(w_0\uparrow\lambda))
&\simeq
H^{l(w_0)}(G\times_BV_{\lambda'}^\ast(-2\rho)))\\
&\simeq
H^0(G\times_BV_{\lambda'})^\ast
\end{align*}
as $G$-modules.
For the naturality discussion, see \cite[Section 2]{Sug2}.
\endproof
\begin{remark}
In \cite[Proposition 6.12]{CNS}, the isomorphisms in Lemma \ref{Sug2-3-2} hold up to the spectral flow twist $\mathcal{S}^2$.
However, since $\mathcal{S}^2$ commutes with the $B$-action, it does not affect the discussion and we omit it. 
\end{remark}

In Section \ref{section: simplicity theorem under the strong condition}, we prove the simplicity theorem for the multiplet W-(super)algebras and W-(super)algebras by comparing the Weyl-type character formula  \eqref{Weyl type character formula for multiplicity W} with the ``Kazhdan--Lusztig-type" character formula described by the affine Weyl group (see Lemma \ref{uniqueness of KL} and \ref{KL character formula for twisted A_2n}). 
The following lemma summarizes that argument.
\begin{lemma}
\label{new simplicity theorem}
Let $(\Lambda, \uparrow,\{V_\lambda\}_{\lambda\in\Lambda})$ be a shift system.
For a fixed $\beta\in P_+$ and $\lambda_0,\lambda_1\in\Lambda$ such that the character $\mathrm{ch}_q\mathcal{W}_{-\beta+\lambda_i}$ of $\mathcal{W}_{-\beta+\lambda_i}$ is given by \eqref{Weyl type character formula for multiplicity W}, let us assume that there exists a family of graded vector spaces 
\begin{align*}
\{\mathbb{M}(y,\mu_{\lambda_i})
=
\bigoplus_{\Delta}\mathbb{M}(y,\mu_{\lambda_i})_\Delta\}_{y\in\hat{W}},\ \ 
\mathbb{L}(-\beta+\lambda_i)
=
\bigoplus_{\Delta}\mathbb{L}(-\beta+\lambda_i)_\Delta\ \ (i=0,1)
\end{align*}
(where $\hat{W}$ is the affine Weyl group of $\hat{\g}$, $\mu_{\lambda_i}$ is an element in $\hat{\h}^\ast$ and denote $y\sim y'$ if $y'\in Wy$)
such that
\begin{enumerate}
\item
\label{Verma coincidence condition}
$\ch_q\mathbb{M}(y,\mu_{\lambda_i}) 
=\ch_q\mathbb{M}(y',\mu_{\lambda_i})$ iff $y\sim y'$, and $\{\ch_q\mathbb{M}(y,\mu_{\lambda_i})\}_{y\in \hat{W}/\sim}$ are linearly independent.
\item 
\label{Verma and staggered B-module}
$\ch_q\mathbb{M}(y_\sigma,\mu_{\lambda_i})
=\ch_q V_{\lambda_i}^{h=\sigma\circ\beta}$ for some $y_\sigma\in \hat{W}$ s.t. $y_{\sigma}\sim y_{\sigma'}$ iff $\sigma=\sigma'$,
\item 
\label{formal KL decomposition}
$\ch_q\mathbb{L}(-\beta+\lambda_i)=\sum_{y\in\hat{W}}a_{\beta,y}\ch_q\mathbb{M}(y,\mu_{\lambda_i})$ for some $a_{\beta,y}\in\C$ (note that $a_{\beta,y}$ is independent of $\lambda_i$),
\item 
\label{simplicity at lambda0}
$\ch_q\mathcal{W}_{-\beta+\lambda_0}=\ch_q\mathbb{L}(-\beta+\lambda_0)$.
\end{enumerate}
Then we have $\ch_q\mathcal{W}_{-\beta+\lambda_1}=\ch_q\mathbb{L}(-\beta+\lambda_1)$.
\end{lemma}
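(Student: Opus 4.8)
The plan is to argue entirely in the ring of $q$-characters (formal power series graded by conformal weight), since all the geometric input --- the Borel--Weil--Bott isomorphism of Theorem \ref{main theorem of part 1}\eqref{main theorem 1(2)}, the vanishing $H^{n>0}(G\times_BV_\lambda)=0$, and the localization formula --- has already been compressed into the Weyl-type formula \eqref{Weyl type character formula for multiplicity W}. The argument is then pure linear algebra, and the one structural fact that makes it work is that the coefficients $a_{\beta,y}$ in \eqref{formal KL decomposition} are \emph{independent of $\lambda_i$}: a relation among them extracted at $\lambda_0$ can be transported verbatim to $\lambda_1$.

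First I would assemble the chain of equalities at $\lambda_0$. Combining the hypothesis that $\ch\mathcal{W}_{-\beta+\lambda_0}$ is given by \eqref{Weyl type character formula for multiplicity W} with the simplicity assumption \eqref{simplicity at lambda0} and the decomposition \eqref{formal KL decomposition} yields
\[
\sum_{\sigma\in W}(-1)^{l(\sigma)}\ch V_{\lambda_0}^{h=\sigma\circ\beta}
\overset{\eqref{Weyl type character formula for multiplicity W}}{=}
\ch\mathcal{W}_{-\beta+\lambda_0}
\overset{\eqref{simplicity at lambda0}}{=}
\ch\mathbb{L}(-\beta+\lambda_0)
\overset{\eqref{formal KL decomposition}}{=}
\sum_{y\in\hat{W}}a_{\beta,y}\,\ch\mathbb{M}(y,\mu_{\lambda_0}).
\]
Using \eqref{Verma and staggered B-module} to rewrite $\ch V_{\lambda_0}^{h=\sigma\circ\beta}=\ch\mathbb{M}(y_\sigma,\mu_{\lambda_0})$, both sides become linear combinations of the $\ch\mathbb{M}(y,\mu_{\lambda_0})$. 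I would then extract a relation among the $a_{\beta,y}$: by \eqref{Verma coincidence condition} each $\ch\mathbb{M}(y,\mu_{\lambda_0})$ depends only on the class $[y]\in\hat W/\!\sim$, and distinct class-characters are linearly independent. Grouping the right-hand sum by classes (each class $Wy$ being finite, so the inner sums are finite), and using that the $y_\sigma$ lie in pairwise distinct classes by \eqref{Verma and staggered B-module}, comparison of coefficients forces
\[
\sum_{y'\sim y}a_{\beta,y'}
=
\begin{cases}
(-1)^{l(\sigma)} & \text{if $y\sim y_\sigma$ for some $\sigma\in W$},\\
0 & \text{otherwise}.
\end{cases}
\]

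Finally I would transport this relation to $\lambda_1$. Because the $a_{\beta,y}$ are the same at $\lambda_1$, re-expanding $\ch\mathbb{L}(-\beta+\lambda_1)$ via \eqref{formal KL decomposition}, regrouping by classes, and substituting the relation above collapses the sum to $\sum_{\sigma\in W}(-1)^{l(\sigma)}\ch\mathbb{M}(y_\sigma,\mu_{\lambda_1})$; applying \eqref{Verma and staggered B-module} and then \eqref{Weyl type character formula for multiplicity W} backwards identifies this with $\ch\mathcal{W}_{-\beta+\lambda_1}$, which is the claim. The only genuine subtlety I anticipate is the well-definedness and rearrangement of the possibly infinite sum $\sum_{y}a_{\beta,y}\ch\mathbb{M}(y,\mu_{\lambda_i})$ and the legitimacy of extracting coefficients from it. I would handle this by working degree by degree in the conformal grading: since the minimal conformal weights of the $\mathbb{M}(y,\mu_{\lambda_i})$ tend to infinity along $\hat W/\!\sim$, only finitely many contribute to each homogeneous component, so each layer is a finite sum and the linear independence of \eqref{Verma coincidence condition} applies there directly.
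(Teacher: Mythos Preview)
Your proposal is correct and follows essentially the same route as the paper: assemble the chain \eqref{Weyl type character formula for multiplicity W}--\eqref{simplicity at lambda0}--\eqref{formal KL decomposition} at $\lambda_0$, use \eqref{Verma coincidence condition} and \eqref{Verma and staggered B-module} to extract the class-wise coefficient relation $\sum_{y'\sim y}a_{\beta,y'}=(-1)^{l(\sigma)}$ or $0$, then replay it at $\lambda_1$. Your added remark about working degree by degree to justify rearranging the infinite sum is a welcome point of rigor that the paper leaves implicit.
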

\begin{proof}
By assumption, we have
\begin{align*}
\sum_{\sigma\in W}(-1)^{l(\sigma)}\ch_q V_{\lambda_0}^{h=\sigma\circ\beta}
\overset{\eqref{Weyl type character formula for multiplicity W}}{=}
\ch_q\mathcal{W}_{-\beta+\lambda_0}
\overset{\eqref{simplicity at lambda0}}{=}
\ch_q\mathbb{L}(-\beta+\lambda_0)
\overset{\eqref{formal KL decomposition}}{=}
\sum_{y\in\hat{W}}a_{\beta,y}\ch_q\mathbb{M}(y,\mu_{\lambda_0}).
\end{align*}
By applying \eqref{Verma coincidence condition} and \eqref{Verma and staggered B-module} to the equation, we have
\begin{align*}
\sum_{y'\sim y}a_{\beta,y'}
=
\begin{cases}
(-1)^{l(\sigma)}&y\sim y_\sigma,\\
0&\text{otherwise}.
\end{cases}
\end{align*}
By applying this relation to $\ch_q\mathcal{W}_{-\beta+\lambda_1}$, the assertion is proved.
\end{proof}
In the remainder of this subsection, we
consider the setting of Example \ref{example ours} under the following conditions:
\begin{enumerate}
\item\label{assumption: global VOA structure for Q}
$W_{Q}:=H^0(G\times_BV_0)$ is a generalized cyclic VOSA and $W_{Q+\lambda}:=H^0(G\times_BV_\lambda)$ is a graded $W_Q$-module for each $\lambda\in\Lambda$.
Furthermore, the natural $G$-action is in $\operatorname{Aut}(W_Q)$.
\item\label{assumption: global VOA structure for P}
$W_{P}:=\bigoplus_{\substack{\mu\in\Lambda\\\mu_\bullet=0}}H^0(G\times_BV_\mu)$ is a generalized cyclic VOSA and $W_{P+\lambda_\bullet}:=\bigoplus_{\substack{\mu\in\Lambda\\\mu_\bullet=\lambda_\bullet}}H^0(G\times_BV_\mu)$ is a graded $W_P$-module for each $\lambda\in\Lambda$.
Furthermore, the natural $G$-action is in $\operatorname{Aut}(W_P)$ 
and $W_P$ is an abelian intertwining algebra with abelian group $P/Q$ (see \cite{McR2}).
\end{enumerate} 
If \eqref{assumption: global VOA structure for P} holds, then so does \eqref{assumption: global VOA structure for Q} (but the converse does not in general).
If \eqref{assumption: global VOA structure for Q} (resp. \eqref{assumption: global VOA structure for P}) holds, then $\mathcal{W}_0\simeq W_Q^G\simeq W_P^G$ is a vertex operator super subalgebra of $W_Q$ and $\mathcal{W}_{-\beta}$ is a $\mathcal{W}_0$-module for any $\beta\in P_+\cap Q$ (resp. for any $\beta\in P_+$) and $\lambda\in\Lambda$.
In the same manner as \cite[Section 3.2]{Sug2}, we obtain the following.
\begin{lemma}
\label{lemma: general simplicity theorem for lambda is zero}
Let us assume that $\lambda_\bullet=0$ satisfies \eqref{strong condition} and \eqref{assumption: global VOA structure for Q} (resp. \eqref{assumption: global VOA structure for P}) holds.
Then $W_Q$ (resp. $W_P$) is a simple VOSA (resp. simple generalized VOSA).
Furthermore, for each $\beta\in P_+\cap Q$ (resp. $\beta\in P_+$), $\mathcal{W}_{-\beta}$ is a simple $\mathcal{W}_0$-module.
If \eqref{assumption: global VOA structure for P} holds, then $W_{Q+\lambda^\bullet}$ is a simple $W_Q$-module.
\end{lemma}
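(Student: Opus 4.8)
The plan is to follow \cite[Section 3.2]{Sug2}: first upgrade the Serre-duality statement of Lemma \ref{Sug2-3-2} to a genuine self-duality of $W_Q$ (resp. $W_P$), then apply the standard principle that a self-contragredient (generalized) VOSA of CFT-type is simple, and finally transport simplicity to the multiplicity spaces $\mathcal{W}_{-\beta}$ and to the components $W_{Q+\lambda^\bullet}$ by quantum Galois theory and the theory of abelian intertwining algebra extensions. Throughout I work in the setting of Example \ref{example ours}, where $w_0\ast 0=0$ and $w_0\uparrow 0=-\rho$.

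First I would establish self-duality. Since $\lambda_\bullet=0$ satisfies \eqref{strong condition}, Lemma \ref{Sug2-3-2} applies with $\lambda=\lambda'=0$: the required numerical condition $w_0\uparrow 0-w_0(w_0\uparrow 0)=-\rho-\rho=-2\rho$ holds (using $w_0(-\rho)=\rho$), and the module identification $V_0^\ast\simeq V_{w_0\ast 0}(-w_0(w_0\uparrow 0))=V_0(-\rho)$ is exactly the duality built into the concrete $V_0$ of Example \ref{example ours}. Hence $W_Q=H^0(G\times_BV_0)\simeq H^0(G\times_BV_0)^\ast=W_Q^\ast$, i.e.\ $W_Q$ is self-contragredient. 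Running the same argument for every $V_\mu$ with $\mu_\bullet=0$ and summing over the grading yields self-duality of $W_P$ as a generalized VOSA.

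Next I would deduce simplicity from self-duality. A self-contragredient (generalized) VOSA of CFT-type carries a nondegenerate invariant bilinear form $\langle\cdot,\cdot\rangle$ with $\langle\mathbf{1},\mathbf{1}\rangle\neq 0$, whose radical is an ideal and is therefore zero. For simplicity directly: if $J$ is a nonzero ideal of $W_Q$, then by the CFT-type hypothesis in \eqref{assumption: global VOA structure for Q} the minimal-degree component is $\C\mathbf{1}$, so either $\mathbf{1}\in J$, whence $J=W_Q$ as $\mathbf{1}$ generates $W_Q$, or $J$ sits in strictly positive degrees and hence $J\subseteq\mathbf{1}^{\perp}$; invariance of the form then forces $J\subseteq\operatorname{rad}\langle\cdot,\cdot\rangle=0$, a contradiction. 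Thus $W_Q$ is a simple VOSA; the identical argument using \eqref{assumption: global VOA structure for P} shows $W_P$ is a simple generalized VOSA. With simplicity of $W_Q$ in hand, I would apply quantum Galois theory \cite{DM, McR2}: the reductive group $G\subseteq\operatorname{Aut}(W_Q)$ acts on the simple algebra $W_Q$, and the $G$-isotypic decomposition \eqref{G-module decomposition of FT} identifies $\mathcal{W}_0=W_Q^G$ as the fixed-point subalgebra and each multiplicity space $\mathcal{W}_{-\beta}$ ($\beta\in P_+\cap Q$) as a simple $\mathcal{W}_0$-module, with distinct $\beta$ giving inequivalent modules; the $W_P$-version gives the same conclusion over $\beta\in P_+$.

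Finally, for the last assertion I would use that, under \eqref{assumption: global VOA structure for P}, $W_P=\bigoplus_{\lambda^\bullet\in P_{\mathrm{min}}}W_{Q+\lambda^\bullet}$ is a simple abelian intertwining algebra graded by $P/Q$ with degree-zero component $W_Q$; by the theory of (abelian intertwining / simple current) extensions of a simple VOA, each homogeneous component of such a simple graded extension is a simple $W_Q$-module, so $W_{Q+\lambda^\bullet}$ is simple. The main obstacle I anticipate is the quantum Galois step: it must be invoked for a \emph{continuous reductive} group $G$ rather than a finite group, and in the super / generalized-VOSA setting, so one must appeal to the appropriate strengthening in \cite{McR2} rather than the classical Dong--Mason form; a secondary point is verifying the module-identification hypothesis $V_0^\ast\simeq V_0(-\rho)$ of Lemma \ref{Sug2-3-2} within Example \ref{example ours}, which is a concrete computation on the underlying free-field algebra.
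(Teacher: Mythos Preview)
Your proposal is correct and follows essentially the same route as the paper: obtain self-duality of $W_Q$ (resp.\ $W_P$) from Lemma \ref{Sug2-3-2}, deduce simplicity from self-contragredience plus the CFT-type hypothesis, then invoke \cite{McR2} both for the quantum Galois step (simplicity of the multiplicity spaces $\mathcal{W}_{-\beta}$) and for the abelian-intertwining-algebra step (simplicity of $W_{Q+\lambda^\bullet}$). The only cosmetic difference is in the simplicity argument itself: you argue via ideals and the radical of the invariant form, while the paper writes the equivalent one-line computation $1=\langle a^\ast,a\rangle=\langle\mathbf{1},(a^\ast_{(-1)})^\dagger a\rangle$, which together with CFT-type shows that every nonzero $a$ can be mapped to $\mathbf{1}$ by a mode; both are standard and interchangeable.
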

\begin{proof}
We will omit the details of the proof (see \cite[Section 3.2]{Sug2}).
Let us consider the case \eqref{assumption: global VOA structure for P}.
By Lemma \ref{Sug2-3-2}, $W_{P}$ is self-dual.
Then for the non-degenerate $W_P$-invariant bilinear form $\langle\ ,\ \rangle\colon W_P\times W_P\rightarrow\C$, we have
\begin{align*}
1=
\langle a^\ast,a\rangle
=\langle a^\ast_{(-1)}\mathbf{1},a\rangle
=\langle \mathbf{1},(a^\ast_{(-1)})^\dagger a\rangle,
\end{align*}
where $a\in W_P$ and $a^\ast\in W_P\simeq W_P^\ast$ by abuse of notation.
Since $W_P$ is cyclic, we have $(a^\ast_{(-1)})^{\dagger}a=\mathbf{1}$
and thus $W_P$ is simple.
By \cite[Theorem 3.2]{McR2}, $\mathcal{W}_{-\beta}$ is simple as $\mathcal{W}_0$-module for any $\beta\in P_+$.
Finally, by \cite[Proposition 2.26]{McR2}, each $W_{Q+\lambda^\bullet}$ is simple as $W_Q$-module.
\end{proof}

In the same manner as \cite[Lemma 3.5, 3.22]{Sug2} or \cite[Theorem 6.15]{CNS}, we have the following.

\begin{lemma}
\label{lemma: general simplicity theorem for lambda is nonzero}
Let us assume that \eqref{assumption: global VOA structure for Q} holds and for some $\lambda\in\Lambda$ and any $\alpha\in P_+\cap Q$, $\mathcal{W}_{-\alpha+\lambda_\bullet}$ is cyclic as $\mathcal{W}_0$-module 
with respect to a certain vector
$|\lambda_\bullet\rangle\in\mathcal{W}_{-\alpha+\lambda_\bullet}$.
Then $W_{Q+\lambda_\bullet}$ is simple as $W_Q$-module.
Furthermore, if \eqref{assumption: global VOA structure for P} holds and $\mathcal{W}_{-\alpha+\lambda}$ is cyclic as $\mathcal{W}_0$-module 
with respect to a certain vector
$|\lambda\rangle\in\mathcal{W}_{-\alpha+\lambda}$, then $W_{Q+\lambda}$ is simple as $W_Q$-module.
\end{lemma}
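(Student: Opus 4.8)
The plan is to prove simplicity by a cyclicity-plus-duality scheme, exactly in the spirit of \cite[Lemma 3.5, 3.22]{Sug2} and \cite[Theorem 6.15]{CNS}. Write $M=W_{Q+\lambda_\bullet}$ and let $v=|\lambda_\bullet\rangle$ be the distinguished generator of the vacuum $G$-isotypic component $\mathcal{W}_{\lambda_\bullet}$, i.e. the $\alpha=0$ summand in \eqref{G-module decomposition of FT}. The simplicity of $M$ as a $W_Q$-module reduces to the two assertions: (a) $M=U(W_Q)v$, so that $M$ is generated by $v$; and (b) every nonzero $W_Q$-submodule $N\subseteq M$ contains $v$. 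Granting (a) and (b), any nonzero $N$ contains $v$, hence $N\supseteq U(W_Q)v=M$, forcing $N=M$. Throughout I use that $W_Q$ is simple (from Lemma \ref{lemma: general simplicity theorem for lambda is zero}), so that the quantum Galois theory \cite{DM,McR2} gives the clean $G\times\mathcal{W}_0$-isotypic decomposition $W_Q=\bigoplus_{\alpha\in P_+\cap Q}L_\alpha\otimes\mathcal{W}_{-\alpha}$ with pairwise inequivalent simple $\mathcal{W}_0$-modules $\mathcal{W}_{-\alpha}$.

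First I would establish (a). Restricting along $\mathcal{W}_0\subseteq W_Q$ and using the CFT-type hypothesis, $U(\mathcal{W}_0)v=\mathcal{W}_{\lambda_\bullet}$, so the entire vacuum component lies in $U(W_Q)v$. To reach the remaining summands $L_\alpha\otimes\mathcal{W}_{-\alpha+\lambda_\bullet}$ for $0\neq\alpha\in P_+\cap Q$, I would act on $v$ by the modes of the highest-weight vector of the corresponding component $L_\alpha\otimes\mathcal{W}_{-\alpha}$ of $W_Q$ itself, and then use the $G$-equivariance $g(a_{(n)}m)=(ga)_{(n)}(gm)$, valid since the natural $G$-action lies in $\operatorname{Aut}(W_Q)$, to spread the output over all of $L_\alpha$. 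The CFT-type hypothesis on $\mathcal{W}_{-\alpha+\lambda_\bullet}$ then propagates the $\mathcal{W}_0$-action to fill each component, giving $M=U(W_Q)v$. For the second, stronger assertion, where $\lambda^\bullet$ is a possibly nontrivial minuscule weight, I would additionally invoke the abelian intertwining algebra structure of $W_P$ with group $P/Q$ furnished by \eqref{assumption: global VOA structure for P}: its intertwining operators connect $W_{Q+\lambda_\bullet}$ to $W_{Q+\lambda}$ across the minuscule shift $\lambda^\bullet$ and transport the generation statement to $W_{Q+\lambda}$.

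For (b) I would use the contragredient pairing. By Lemma \ref{Sug2-3-2} the dual $M^\ast$ is isomorphic to $W_{Q+\lambda'_\bullet}$ for a dual parameter $\lambda'_\bullet$, and since the CFT-type hypothesis is symmetric in the parameter, $M^\ast$ is again cyclic, generated by the lowest vector $v^\ast$ of its vacuum component. Equivalently, there is a non-degenerate $W_Q$-invariant pairing $\langle\ ,\ \rangle$ between $M$ and the cyclic module $M^\ast$ satisfying $\langle m',a_{(n)}m\rangle=\langle a_{(n)}^\dagger m',m\rangle$. Given $0\neq n\in N$, non-degeneracy yields $a\in W_Q$ and an index $k$ with $\langle v^\ast,a_{(k)}n\rangle\neq0$; because $v^\ast$ generates $M^\ast$ and pairs nontrivially only against the one-dimensional lowest vacuum space, a grading and weight count forces $a_{(k)}n$ to be a nonzero multiple of $v$ after projection to that space. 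Since $a_{(k)}n\in N$, this gives $v\in N$ and proves (b). Running this with $M=W_{Q+\lambda_\bullet}$ under \eqref{assumption: global VOA structure for Q} yields the first claim, and with $M=W_{Q+\lambda}$ under \eqref{assumption: global VOA structure for P} the second.

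The main obstacle I anticipate is the generation step (a): controlling the mode products across distinct $G$-isotypic components and guaranteeing they surject onto each $L_\alpha\otimes\mathcal{W}_{-\alpha+\lambda_\bullet}$. This is precisely the point where the shift-system axioms of Part \ref{part 1} no longer suffice and one must feed in the genuine vertex-algebraic input, namely the quantum Galois decomposition of $W_Q$ and, for the $\lambda$-statement, the abelian intertwining algebra structure of $W_P$. By contrast, the duality step (b) is comparatively routine once Lemma \ref{Sug2-3-2} and the CFT-type hypothesis are in hand, so the technical weight of the proof rests on (a).
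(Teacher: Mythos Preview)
Your scheme for the first half---cyclicity of $W_{Q+\lambda_\bullet}$ followed by a contragredient/duality argument as in Lemma~\ref{lemma: general simplicity theorem for lambda is zero}---is exactly what the paper does. The paper phrases cyclicity via repeated use of the Leibniz rule \eqref{eqref: zero mode is derivation} rather than $G$-equivariance, but these come to the same thing.

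For the second half the paper proceeds differently, and more indirectly. Instead of running your (a)+(b) directly on $W_{Q+\lambda}$, it first proves that the \emph{larger} module $W_{P+\lambda_\bullet}$ is simple over $W_P$ (same cyclicity+duality method, now at the $P$-level under hypothesis \eqref{assumption: global VOA structure for P}). Then it descends: given any $0\neq m\in W_{Q+\lambda}\subset W_{P+\lambda_\bullet}$, simplicity over $W_P$ together with \cite[Corollary~4.2]{DM} writes $|\lambda\rangle$ as $\sum_n X^n_{(n)}m$ with $X^n\in W_P$; a $P/Q$-weight count then forces $X^n\in W_Q$, so $|\lambda\rangle$ lies in the $W_Q$-submodule generated by $m$, and simplicity follows. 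Your direct route instead needs to run step (b) on $W_{Q+\lambda}$ itself, and here Lemma~\ref{Sug2-3-2} gives $(W_{Q+\lambda})^\ast\simeq W_{Q-w_0(\lambda)}$, so you need cyclicity---hence the CFT-type hypothesis---for the companion parameter $-w_0(\lambda)$, not only for $\lambda$ as stated. In the applications of Section~\ref{section: simplicity theorem under the strong condition} both parameters satisfy the hypothesis simultaneously, so your approach would go through there; but the paper's detour through $W_P$ and the $P/Q$-grading packages the descent more cleanly and avoids invoking the dual parameter separately.
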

\begin{proof}
We omit the details of the proof for the same reason as above.
Let us consider the latter half.
Using the Leibniz rule \eqref{eqref: zero mode is derivation} repeatedly, $W_{P+\lambda_\bullet}$ is generated by $|\lambda_\bullet\rangle$ as $W_P$-module.
In the same manner as Lemma \ref{lemma: general simplicity theorem for lambda is zero}, $W_{P+\lambda_\bullet}$ is simple as $W_P$-module (by the same discussion for $W_Q$ and $W_{Q+\lambda_\bullet}$, the first half of the Lemma is already proved at this point).
On the other hand, $W_{Q+\lambda}$ is generated by some element in $L_{\lambda^\bullet}\otimes\C^\times|\lambda\rangle$ as $W_Q$-module.
Since $W_{Q+\lambda}\subseteq W_{P+\lambda_\bullet}$ and  $W_{P+\lambda_\bullet}$ is simple as $W_P$-module, by \cite[Corollary 4.2]{DM}, any element of $W_{Q+\lambda}$ has the form $\sum_{n\in\Z}X^n_{(n)}|\lambda\rangle$ for some $\{X^n\}_{n\in\Z}\subset W_{P}$.
By weight consideration, we can show that $\{X^n\}_{n\in\Z}\subset W_{Q}$, and thus $W_{Q+\lambda}$ is simple as $W_Q$-module.
\end{proof}

We now explain how these results are applied to the simplicity theorems for multiplet $W$-(super)algebras.
We consider the case \eqref{assumption: global VOA structure for Q} or \eqref{assumption: global VOA structure for P}.
Then, by Lemma \ref{lemma: general simplicity theorem for lambda is zero}, $\mathcal{W}_{-\beta}$ is simple as $\mathcal{W}_0$-module.
To extend the simplicity of $\mathcal{W}_{-\beta}$ to $\mathcal{W}_{-\beta+\lambda}$ such that $\lambda\in\Lambda$ satisfies \eqref{strong condition}, 
we use
Lemma \ref{new simplicity theorem} to $\lambda_0=0$ and $\lambda_1=\lambda$.
Here, $\mathbb{M}(\mu)$ and $\mathbb{L}(\mu)$ represent the Verma module and the irreducible module over $\mathcal{W}_0$ with highest weight $\mu$, respectively.
As we show later, sometimes $\mathbb{L}(-\beta+\lambda_i)$ ($i=0,1$) has a ``Kazhdan--Lusztig-type" character formula in Lemma \ref{new simplicity theorem}\eqref{formal KL decomposition}.
Since $\mathcal{W}_{-\beta+\lambda_0}$ is irreducible, by Lemma \ref{new simplicity theorem}, so is $\mathcal{W}_{-\beta+\lambda_1}$.
In particular, $\mathcal{W}_{-\beta+\lambda}$ are cyclic.
Furthermore, if they are cyclic, by Lemma \ref{lemma: general simplicity theorem for lambda is nonzero}, $W_{Q+\lambda_\bullet}$ or $W_{Q+\lambda}$ is also simple as $W_Q$-module.

In \cite{Sug,Sug2} and \cite{CNS}, the cases where $\mathcal{W}_0$ is the principal W-algebra $\mathcal{W}_k(\g)$ for simply-laced $\g$ and the affine VOA $V_k(\sll_2)$ were considered, respectively, and the corresponding simplicity theorems were proved in the same manner explained above.
In Section \ref{section: main results} and Section \ref{section: simplicity theorem under the strong condition} of the present paper, we consider the cases where $\mathcal{W}_0$ is the principal W-(super)algebra $\mathcal{W}_k(\g)$ for non-simply-laced $\g$ and $\g=\osp(1|2r)$, respectively. 
In the former case, \eqref{assumption: global VOA structure for P} holds and the simplicity theorem will be proved for any $\lambda\in\Lambda$ satisfying \eqref{strong condition}, but in the later case, it will be proved only for $\lambda^\bullet=0$ case (i.e., assume only \eqref{assumption: global VOA structure for Q}) because of some technical difficulty.

\subsection{Free field algebras}\label{Section: free field algebra}
Let us introduce two VOSAs that will serve as material for shift systems later.
\subsubsection{Fermionic VOSA}
The Clifford algebra is generated by $\{\phi(n),n\in \frac12+\mathbb{Z}\}\cup \{{1}\}$ satisfying the relations
\begin{align*}
    \{\phi(n),\phi(m)\}=\delta_{n,-m},\quad n,m\in \tfrac12+\mathbb{Z}.
\end{align*}
Let $F$ be the module of Clifford algebra generated by $\bold{1}$ such that 
$\phi(n)\bold{1}=0$ ($n>0$).
Then there is a unique VOSA structure on $F$, with the field and conformal vector
\begin{align}\label{conformal vector of free fermion}
Y(\phi(-\tfrac12)\bold{1},z)=\phi(z)=\sum_{n\in \frac12+\mathbb{Z}}\phi(n)z^{-n-\frac12},\ \ 
\omega^{(s)}=\tfrac12 \phi(-\tfrac32)\phi(-\tfrac12)\bold{1},
\end{align}
which gives the central charge $\tfrac12$.
The conformal weight of $\Delta_\phi$ of $\phi(-\tfrac{1}{2})\mathbf{1}$ is $\tfrac{1}{2}$.
The character and supercharacter of \(F\), and the character of the
\(\iota\)-twisted \(F\)-module \(\iota^*F\), are given by
\[
\ch_q F
=
\frac{\eta(q)^2}{\eta(q^2)\eta(q^{1/2})},
\qquad
\operatorname{sch}_q F
=
\frac{\eta(q^{1/2})}{\eta(q)},
\qquad
\ch_q \iota^*F
=
2\frac{\eta(q^2)}{\eta(q)}.
\]
Here \(\iota\) denotes the parity automorphism.
Moreover, $(\phi(-\frac12)v^*,v)=(v^*,\text{i} \phi(-\frac12)v)$ for the dual $v^\ast\in F^\ast$ of $v\in F$.

\subsubsection{Lattice VOSAs}
\label{subsection lattice and fermionic}
Let $L$ be a positive definite integral lattice of rank $r$. 
We extend the bilinear form on $L$ to $\mathfrak{h}_L=\mathbb{C}\otimes_{\mathbb{Z}} L$. 
We also consider the affinization $\hat{\h}_L=\mathbb{C}[t,t^{-1}]\otimes \mathfrak{h}_L\oplus \mathbb{C}c$ of $\mathfrak{h}_L$ and its subalgebras $\hat{\mathfrak{h}}_L^{\geq 0}=\mathbb{C}[t]\otimes \mathfrak{h}_L$, $\hat{\mathfrak{h}}_L^-=t^{-1}\mathbb{C}[t^{-1}]\otimes \mathfrak{h}_L$.
For $\mu\in \mathfrak{h}$, the induced representation $M(\mu)$ of $\hat{\mathfrak{h}}_L$ is defined by
\begin{align*}
M(\mu)=U(\hat{\mathfrak{h}}_L)\otimes_{U(\hat{\mathfrak{h}}_L^{\geq 0}\oplus \mathbb{C}c)}\mathbb{C}_{\mu},
\end{align*}
where $h\in\hat{\h}_L$ acts as $(\mu,h)$ on $\mathbb{C}_{\mu}$. 
Then there exists a simple V(S)A structure (see \cite{Kac1}) on 
\begin{align*}
V_{L}
=
M(0)\otimes \mathbb{C}[L]
\simeq
\bigoplus_{\mu\in L}M(\mu)
\end{align*}
with the vacuum vector $\mathbf{1}=1\otimes 1$, such that 
$Y(ht^{-1}\otimes 1,z)=h(z)=\sum_{n\in \mathbb{Z}}h_{(n)}z^{-n-1}$, $h_{(n)}=h\otimes t^n$.
The group algebra of $\mathbb{C}[L]$ is generated by $e^\mu$, $\mu\in {L}$.
The vertex operator of $Y(e^\mu,z)$ is defined as 
\begin{align*}
Y(e^\mu,z)=e^{\mu}z^{\mu_{(0)}}\exp\left(-\sum_{j<0}\tfrac{z^{-j}}{j}\mu_{(j)}\right)\exp\left(-\sum_{j>0}\tfrac{z^{-j}}{j}\mu_{(j)}\right)\epsilon(\mu,\cdot),
\end{align*}
where $e^\mu(e^{\lambda})=e^{\mu+\lambda}$ for $\lambda\in \mathbb{C}[{L}]$, and $\epsilon(\cdot,\cdot)$ is the $2$-cocycle satisfying \cite[(5.4.14)]{Kac1}. The parity of the element $v\otimes e^{\mu}$ is $|\mu|^2\mod{2}$. 
One can define a (shifted) conformal vector
\begin{align}\label{shifted conformall vector}
\omega_{\mathrm{sh}}=\omega_{\mathrm{st}}+\gamma_{(-2)}\mathbf{1},\ \ 
\omega_{\mathrm{st}}
=\tfrac{1}{2} \sum_{i=1}^{r} v_{i (-1)}v_i^\ast,
\ \ 
\gamma\in \mathfrak{h}_L
\end{align}
with central charge is $r-12(\gamma,\gamma)$,
where $\{v^i\}_{i=1}^{\ell}$ and $\{v_i^\ast\}_{i=1}^{r}$ are bases of $L$ and $L^\ast$, respectively, such that $(v_i,v_j^\ast)=\delta_{i,j}$. 
The dual lattice $L^\ast$ of $L$ is defined by
$L^\ast:=\{\beta\in \mathbb{Q}\otimes L\;|\; (\alpha,\beta)\in \mathbb{Z}\,\,\text{for all}\,\, \alpha\in L\}$.
Then the lattice VO(S)A has finitely many irreducible modules parametrized by $L^\ast/L$ \cite{LiL}; these modules are 
\begin{align*}
V_{L+\lambda}:=M(0)\otimes e^{\lambda}\mathbb{C}[L],\ \ (\lambda\in L^\ast/L)
\end{align*}
up to isomorphism.
For $\mu\in \lambda+L$, the conformal weight $\Delta_\mu$ of $e^\mu$ under the action of $(\omega_{\mathrm{sh}})_{(1)}=L_0$ is 
\begin{align}\label{conformal weight of lattice point vector}
\Delta_{\mu}
=\tfrac{1}{2}|\mu-\gamma|^2+\tfrac{c-r}{24}
=\tfrac{1}{2}|\mu|^2-(\mu,\gamma)
.
\end{align}
According to \cite[Proposition 5.3.2]{FHL}, the contragredient dual $V_{L+\lambda}^\ast$ of $V_{L+\lambda}$ is $V_{L+\lambda'}$, for some $\lambda'\in L^\ast/L$.

\subsubsection{Ramond sector}
\label{sectionramond}
Let us consider the setup in the last two sections and Definition \ref{def: contragredient dual and shpectral flow twist}\eqref{def: spectral flow twist}.
For
\begin{align*}
R=\tfrac{1}{2}\sum_{i\in I_{\mathrm{odd}}}v_i^\ast,\ \ 
I_{\mathrm{odd}}
=
\{1\leq i\leq r\ |\ \text{$|v_i|^2$ is odd}\},
\end{align*}
we can easily check that $R$ is a simple current element and $g_R=\operatorname{exp}(2\pi iR_{(0)})$ gives the parity automorphism $\iota_{V_L}$ of $V_L$.
Let $M$ be a $1\otimes\iota_{F}$-twisted $V_L\otimes F$-module.
Note that $\iota_{V_L}\otimes 1\in\operatorname{Aut}(V_L\otimes F)$.
The \textit{Ramond sector} of $M$ is defined by the $\iota_{V_L\otimes F}$-twisted $V_L\otimes F$-module $\mathcal{S}_{R}(M)$.

\subsection{Construction of shift systems}
\label{section:construction of shift system}
Let us consider step-by-step if we can build a shift system (Definition \ref{Def FT data}) using the two VOSA(-modules), free fermion $F$ and lattice VOSA-module $V_{L+\lambda}$ with conformal vectors \eqref{conformal vector of free fermion} and \eqref{shifted conformall vector}, respectively, introduced above.
To consider $V_{L+\lambda}$ as a weight $B$-module, it is natural to consider the case $L=xQ$ for some $x\in\C^\times$ and the $\mathfrak{h}$-action
\begin{align}\label{the Cartan action for our cases}
h_i=\lceil -\tfrac{1}{x}\alpha_{i (0)}^\vee\rceil.
\end{align}
In other words, we consider the Fock space $M(-x\mu)$ as the Cartan weight space with weight $\lceil \mu\rceil$, where $\lceil \mu\rceil\in P$ is uniquely determined by $\lceil(\mu,\alpha_i^\vee)\rceil=(\lceil \mu\rceil,\alpha_i^\vee)$ for $i\in I$. 
We consider the case where $L=xQ$ is positive integral (see Section \ref{subsection lattice and fermionic}), which is equivalent to the cases $x=\sqrt{p}$ for some $p\in\Z_{\geq 1}$ such that
\begin{enumerate}
\item\label{non-super case}
$p=r^\vee m$ (where $r^\vee$ is the lacing number of $\g$ and $m\in\Z_{\geq 1}$), or
\item\label{super case}
$p=2m-1$, $m\in\Z_{\geq 1}$ and $\g=B_r$.
\end{enumerate}
Note that in the case \eqref{non-super case}, $L=\sqrt{p}Q$ is even, but in the case \eqref{super case}, $\sqrt{p}Q$ is odd (because $|\sqrt{p}\alpha_r|^2$ is odd).
Also, since $\sqrt{p}Q$ is an integral, $\sqrt{p}Q\subseteq (\sqrt{p}Q)^\ast$ and thus we can consider $(\sqrt{p}Q)^\ast/\sqrt{p}Q$. 
For the moment, we will not fix the representative $\Lambda$ of $(\sqrt{p}Q)^\ast/\sqrt{p}Q\simeq \tfrac{1}{p}Q^\ast/Q$, and $\lambda$ will simply represent an element of $\tfrac{1}{p}Q^\ast$ and we use the letter $V_{\sqrt{p}(Q+\lambda)}$ for the corresponding irreducible $V_{\sqrt{p}Q}$-module.
The representative $\Lambda$ will be determined later from considerations on the choice of conformal weight vector.

Since the $\h$-action is given by \eqref{the Cartan action for our cases}, it is natural to consider whether the zero-modes 
\begin{align*}
e^{\sqrt{p}\alpha_i}_{(0)}\colon
V_{\sqrt{p}(Q+\lambda)}^{h=\lceil\mu\rceil}=M(-\sqrt{p}\mu)
\rightarrow 
M(-\sqrt{p}(\mu-\alpha_i))=V_{\sqrt{p}(Q+\lambda)}^{h=\lceil\mu-\alpha_i\rceil=\lceil\mu\rceil-\alpha_i}
\end{align*} 
define a $B$-action of a shift system.  
For a $B$-action to be compatible with the structure of VOSA,  it should be even, but in the case \eqref{super case}, $e^{\sqrt{p}\alpha_r}_{(0)}$ is odd as we checked above.
To avoid this problem, in the case of \eqref{super case}, let us consider $V_{\sqrt{p}(Q+\lambda)}\otimes F$ and $(e^{\sqrt{p}\alpha_r}\otimes\phi)_{(0)}$ instead of $V_{\sqrt{p}Q}$ and $e^{\sqrt{p}\alpha_r}_{(0)}$, respectively.
Then since $\phi\in F$ is odd, $(e^{\sqrt{p}\alpha_r}\otimes\phi)_{(0)}$ is even.
From now on we consider the following two cases:
For case \eqref{non-super case} and \eqref{super case},
\begin{align}
&V_\lambda=V_{\sqrt{p}(Q+\lambda)},\ \ 
\omega=\omega_{\mathrm{sh}},\ \ 
h_i=\lceil-\tfrac{1}{\sqrt{p}}\alpha_{i(0)}^\vee\rceil,\ \ 
f_i=e^{\sqrt{p}\alpha_i}_{(0)}\ \ (1\leq i\leq r),
\label{setup in the non-super case}
\\
&V_\lambda=V_{\sqrt{p}(Q+\lambda)}\otimes F,\ \ 
\omega=\omega_{\mathrm{sh}}+\omega^{(s)},\ \ 
h_i=\lceil-\tfrac{1}{\sqrt{p}}\alpha_{i(0)}^\vee\rceil,\\ 
&f_i=
\begin{cases}
e^{\sqrt{p}\alpha_i}_{(0)}&(1\leq i\leq r-1),\\
(e^{\sqrt{p}\alpha_r}\otimes \phi)_{(0)}&(i=r),
\end{cases}
\label{setup in the super case}
\end{align} 
respectively.
Recall that in Definition \ref{Def FT data}\eqref{conformal grading of FT data}, we need a grading of $V_\lambda$ which decomposes $V_\lambda$ into finite-dimensional weight $B$-modules.
Here we shall employ the conformal grading by $\omega$ above.
Then, for the operators $\{f_i,h_i\}_{i\in I}$ to define the $B$-action in the shift system, in addition to the Serre relation and integrability, the conformal weights must be preserved\footnote{Note that compatibility with the entire Virasoro action is not necessarily required.}.
In each case \eqref{non-super case}, \eqref{super case}, let us determine such a conformal vector $\omega$ and describe the shift system naturally derived from the choice of $\omega$.
In particular, we will give the pair $(\Lambda,\uparrow)$ as special cases of Example \ref{example ours}, and thus Definition \ref{Def FT data}\eqref{FT data 1},\eqref{FT data 2} is satisfied.

\subsubsection{The case \eqref{non-super case}}
\label{subsubsection: construction of non-super shift system}
By \eqref{conformal weight of lattice point vector}, the operators $f_i$ ($i\in I$) preserve the conformal weight iff
\begin{align}\label{conformal vector for the case of non-super}
\omega
=&\tfrac{1}{2}\sum_{i\in I}\alpha_{i(-1)}\alpha_i^\ast+
\sqrt{p}(\rho-\tfrac{1}{p}\rho^\vee)_{(-2)}\mathbf{1}
\end{align}
and thus we consider this conformal vector.
On the other hand, under the choice \eqref{conformal vector for the case of non-super}, another solution of $\Delta_{e^{x\alpha_i}_{(0)}}=0$ is $x=-\tfrac{2}{\sqrt{p}(\alpha_i,\alpha_i)}$, namely the case $x\alpha_i=-\tfrac{1}{\sqrt{p}}\alpha_i^\vee$.
By \cite{TK}, if $(p\lambda+\rho^\vee,\alpha_i)\equiv s\ (\operatorname{mod}\ p)$ for $1\leq s\leq p-1$, then we have the non-zero (conformal weight-preserving) screening operators
\begin{align}\label{short screening operator in non-super case}
Q_{i,\lambda}=
\int_{[\Gamma]} e^{-\tfrac{1}{\sqrt{p}}\alpha_i^\vee}(z_1)\cdots e^{-\tfrac{1}{\sqrt{p}}\alpha_i^\vee}(z_s)dz_1...d_{z_{s}}
\colon
V_{\lambda}\rightarrow
V_{\lambda-\tfrac{s}{p}\alpha_i^\vee},
\end{align}
where $[\Gamma]$ is the cycle defined in \cite[Section 2.3]{NT}. 
To use \eqref{short screening operator in non-super case} as short screening operators of a shift system, we have to regard $\lambda-\tfrac{s}{p}\alpha_i^\vee$ as $\sigma_i\ast\lambda$ for some $W$-action $\ast$.
If we choose $x=\tfrac{1}{p}\rho^\vee$ in Example \ref{example ours}, then the representative $\Lambda$ of $\tfrac{1}{p}Q^\ast/Q$ is given by
\begin{align}\label{the representatives in non-super case}
\Lambda=\{
-\lambda^\bullet+\lambda_\bullet\ |\ 
\lambda^\bullet\in P_{\mathrm{min}},\ 
\lambda_{\bullet}=\sum_{i\in I}\tfrac{\lambda_i-1}{p}\alpha_i^\ast,\  
1\leq\lambda_i\leq r_i^\vee m
\},\ 
r_i^\vee=
\begin{cases}
r^\vee&(\text{$\alpha_i$ is long}),\\
1&(\text{$\alpha_i$ is short}),
\end{cases}
\end{align}
where $P_{\mathrm{min}}$ is the set of minuscule weights,
and has the $W$-action $\ast$ and the shift map $\uparrow$ induced from
\begin{align}\label{W-action in non-super case}
\sigma\ast\mu=\sigma(\mu+\tfrac{1}{p}\rho^\vee)-\tfrac{1}{p}\rho^\vee.
\end{align}
Then under the $W$-action, the short screening operator \eqref{short screening operator in non-super case} is described as
$Q_{i,\lambda}\colon
V_{\lambda}\rightarrow 
V_{\sigma_i\ast\lambda}(\sigma_i\uparrow\lambda)$.

\subsubsection{The case \eqref{super case}}
\label{subsubsection: construction of super shift system}
By \eqref{conformal weight of lattice point vector} and $\Delta_\phi=\tfrac{1}{2}$, the operators $f_i$ ($i\in I$) preserve the conformal weight iff
\begin{align}\label{conformal vector for the case of super}
\omega
=\omega^{(s)}+\tfrac{1}{2}\sum_{i\in I}\alpha_{i(-1)}\alpha_i^\ast+
\sqrt{p}(1-\tfrac{1}{p})\rho_{(-2)}\mathbf{1}
\end{align}
and thus we consider this conformal vector (note that the $\omega_{\mathrm{sh}}$-part is different from \eqref{conformal vector for the case of non-super}).
On the other hand, under the choice \eqref{conformal vector for the case of super}, another solution of $\Delta_{e^{x\alpha_i}_{(0)}}=0$ ($i\not=r$) and $\Delta_{(e^{x\alpha_r}\otimes\phi)_{(0)}}=0$ is $x=-\tfrac{1}{\sqrt{p}}$.
In the same manner as above,
if $(p\lambda+\rho,\alpha_i^\vee)\equiv s\ (\operatorname{mod}\ p)$ for $1\leq s\leq p-1$, then we have the non-zero (conformal weight-preserving) screening operators (see also \cite{IK1})
\begin{align}\label{short screening operator in super case}
Q_{i,\lambda}=
\begin{cases}
\int_{[\Gamma]} e^{-\frac{1}{\sqrt{p}}\alpha_i}(z_1)\cdots e^{-\frac{1}{\sqrt{p}}\alpha_i}(z_s)dz_1...d_{z_{s}}&(i\not=r),\\
\int_{[\Gamma]} (e^{-\frac{1}{\sqrt{p}}\alpha_r}\otimes\phi)(z_1)\cdots
(e^{-\frac{1}{\sqrt{p}}\alpha_r}\otimes\phi)(z_{s})dz&(i=r),
\end{cases}
\colon
V_{\lambda}\rightarrow
V_{\lambda-\frac{s}{p}\alpha_i}.
\end{align}
If we choose $x=\tfrac{1}{p}\rho$ in 
Example \ref{example ours}, then the representative $\Lambda$ of $\tfrac{1}{p}Q^\ast/Q$ is given by
\begin{align}\label{the representatives in super case}
\Lambda=\{
-\lambda^\bullet+\lambda_\bullet\ |\ 
\lambda^\bullet\in P_{\mathrm{min}},\ 
\lambda_{\bullet}=\sum_{i\in I}\tfrac{\lambda_i-1}{p}\varpi_i,\  
1\leq\lambda_i\leq p, \ \ 
\lambda_r+(\lambda^\bullet,\alpha_r^\vee)\equiv 1\ (\operatorname{mod}\ 2)
\},
\end{align}
and has the $W$-action $\ast$ and the shift map $\uparrow$ induced from
\begin{align}\label{W-action in super case}
\sigma\ast\mu=\sigma(\mu+\tfrac{1}{p}\rho)-\tfrac{1}{p}\rho.
\end{align}
Then, under the $W$-action, the short screening operator \eqref{short screening operator in super case} is described as
$Q_{i,\lambda}\colon
V_{\lambda}\rightarrow 
V_{\sigma_i\ast\lambda}(\sigma_i\uparrow\lambda)$.

The following lemma is proved in the same manner as \cite[Appendix]{Sug}, so we will omit the proof.
\begin{lemma}
\label{lemm: strong condition for our cases}
\cite[Appendix]{Sug}
\begin{enumerate}
\item
\label{lemm: strong condition for non-super case}
For the $W$-module $\Lambda$ defined by \eqref{the representatives in non-super case}, \eqref{W-action in non-super case}, we have
\begin{align*}
\text{$\lambda\in\Lambda$ satisfies \eqref{strong condition}}
\iff
(p\lambda_\bullet+\rho^\vee,\theta_{s,\g})\leq m
\iff
(p\lambda_\bullet+\rho^\vee,{}^L\theta)\leq p.
\end{align*}
In particular, $m\geq {}^Lh_{\g}^\vee-1$.
Furthermore, for such a $\lambda\in\Lambda$, we have $w_0\uparrow\lambda=-\rho^\vee$.
\item 
\label{lemm: strong condition for super case}
For the $W$-module $\Lambda$ defined by \eqref{the representatives in super case}, \eqref{W-action in super case}, we have
\begin{align*}
\text{$\lambda\in\Lambda$ satisfies \eqref{strong condition}}
\iff
(p\lambda_\bullet+\rho,{}^L\theta)\leq p.
\end{align*}
In particular, $m\geq r$.
Furthermore, for such a $\lambda\in\Lambda$, we have $w_0\uparrow\lambda=-\rho$.
\end{enumerate}
\end{lemma}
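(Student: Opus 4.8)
The plan is to linearize the $\ast$-action so that both equivalences and the value of $w_0\uparrow\lambda$ become statements about the fundamental alcove of an affine Weyl group. I would set $\nu:=p\lambda_\bullet+\rho^\vee$ in case \eqref{lemm: strong condition for non-super case} and $\nu:=p\lambda_\bullet+\rho$ in case \eqref{lemm: strong condition for super case}. Because the $W$-action in \eqref{W-action in non-super case} and \eqref{W-action in super case} is the $\tfrac1p\rho^\vee$- (resp.\ $\tfrac1p\rho$-)shifted action of Example \ref{example ours}, multiplying by $p$ converts it into the \emph{linear} $W$-action on $\nu$, i.e.\ $p(\sigma\ast\lambda_\bullet)+\rho^\vee=\sigma\nu$ (resp.\ with $\rho$). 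Under this dictionary the defining inequalities of the representatives in \eqref{the representatives in non-super case} (resp.\ \eqref{the representatives in super case}) read $0<(\nu,\alpha_i^\vee)\le p$, and the carry-over $\sigma\uparrow\lambda=\sigma\ast\lambda_\bullet-(\sigma\ast\lambda)_\bullet$ records exactly the lattice translation needed to return $\sigma\nu$ to this region.

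Next I would rewrite \eqref{strong condition} in its reduced-word form \eqref{another form of strong condition} from Remark \ref{rmk easy facts on FT data}: along a minimal expression of $w_0$ all carry-overs add without correction. In the $\nu$-picture each elementary reflection either fixes the alcove (zero carry-over, by axiom \eqref{length and carry-over}) or crosses a wall (nonzero carry-over). The crux is to prove that this no-correction property holds at every step precisely when $\nu$ lies in the \emph{closed} fundamental alcove of the relevant level-$p$ affine Weyl group $W_p$, i.e.\ when, beyond $0<(\nu,\alpha_i^\vee)$, one also has $(\nu,{}^L\theta)\le p$. The key structural input, and the genuinely non-simply-laced point, is that the translations defining $W_p$ come from the root lattice $Q$ of $\g$, so that $W_p$ is the level-$p$ affine Weyl group of the Langlands dual ${}^L\g$ and its top alcove wall is governed by the highest root ${}^L\theta$ of ${}^L\g$ rather than by $\theta_\g$; establishing this wall-crossing correspondence reflection by reflection is where I expect the main difficulty to lie.

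Granting the alcove characterization, the first stated equivalence $(p\lambda_\bullet+\rho^\vee,\theta_{s,\g})\le m\iff(p\lambda_\bullet+\rho^\vee,{}^L\theta)\le p$ is then purely formal: it is the rescaling ${}^L\theta=r^\vee\theta_{s,\g}$ (the highest short root of $\g$ is the highest root of ${}^L\g$ up to the factor $r^\vee$) together with $p=r^\vee m$. The bounds $m\ge{}^Lh^\vee-1$ (resp.\ $m\ge r$) are the nonemptiness constraints: the base point $\nu=\rho^\vee$ (resp.\ $\nu=\rho$), corresponding to $\lambda_\bullet=0$, must itself satisfy the top-wall inequality, and evaluating $(\rho^\vee,{}^L\theta)=r^\vee({}^Lh^\vee-1)$ via the standard identity $(\rho_{{}^L\g},{}^L\theta^\vee)={}^Lh^\vee-1$ (resp.\ $(\rho,{}^L\theta)=2r-1$ for ${}^L\g=C_r$) yields the claim after dividing by $r^\vee$ (resp.\ using $p=2m-1$).

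Finally, for $w_0\uparrow\lambda$ I would use that the shift map of Example \ref{example ours} is the manifestly reduced-word-independent expression $\sigma\uparrow\lambda=\sigma\ast\lambda_\bullet-(\sigma\ast\lambda)_\bullet$, so the last observation in Remark \ref{rmk easy facts on FT data}\eqref{independence of minimal exapression?} applies and forces $w_0\uparrow\lambda$ to be minus the Weyl (co)vector of the root system actually driving the carry-overs. Since the short screening operators \eqref{short screening operator in non-super case} shift in the coroot directions $\alpha_i^\vee$ (the roots of ${}^L\g$), this gives $w_0\uparrow\lambda=-\rho^\vee$; since \eqref{short screening operator in super case} shifts in the root directions $\alpha_i$ of $\g=B_r$, it gives $w_0\uparrow\lambda=-\rho$. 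The main obstacle throughout remains the wall-crossing bookkeeping of the second paragraph; following \cite[Appendix]{Sug} I would secure it either by the uniform Langlands identification of $W_p$ with the affine Weyl group of ${}^L\g$ or, failing a clean uniform argument, by a finite case check over the non-simply-laced types $B_r,C_r,F_4,G_2$ (and $B_r$ with $p$ odd).
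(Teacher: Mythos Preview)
Your approach is conceptually different from the paper's. The paper (deferring to \cite[Appendix]{Sug}, and as visible in the drafted tables) proceeds by brute force: fix one explicit reduced word $w_0=\sigma_{i_{l(w_0)}}\cdots\sigma_{i_1}$ for each Dynkin type, compute the sequence of elementary carry-overs $\sigma_{i_j}\uparrow(\sigma_{i_{j-1}}\cdots\ast\lambda)$ step by step, tabulate them, and then read off both the equivalence with the alcove inequality and the value of $w_0\uparrow\lambda$ by summing the table entries. Your proposal replaces this with a uniform argument: linearize via $\nu=p\lambda_\bullet+\rho^\vee$ (resp.\ $\rho$), identify the strong condition with $\nu$ lying in the closed fundamental alcove of the level-$p$ affine Weyl group of ${}^L\g$, and invoke the remark on independence of reduced words. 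This is more conceptual and explains \emph{why} ${}^L\theta$ appears, whereas the tables merely confirm it.

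That said, there is one soft spot beyond the wall-crossing bookkeeping you already flag. Your derivation of $w_0\uparrow\lambda$ leans on Remark~\ref{rmk easy facts on FT data}\eqref{independence of minimal exapression?}, which literally asserts $w_0\uparrow\lambda=-\rho$; you then reinterpret this as ``minus the Weyl (co)vector of the root system driving the carry-overs'' to obtain $-\rho^\vee$ in case~\eqref{lemm: strong condition for non-super case}. That reinterpretation is correct in spirit but is not what the remark proves, and the remark itself gives no argument. In the paper's approach the value $-\rho^\vee$ (resp.\ $-\rho$) is not deduced abstractly: it drops out of the explicit sum $\sum_j\sigma_{i_j}\uparrow(\cdots)$ computed from the tables. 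If you want to avoid the tables entirely you should compute $w_0\uparrow\lambda=w_0\ast\lambda_\bullet-(w_0\ast\lambda)_\bullet$ directly in your $\nu$-coordinates, using that under the alcove condition $w_0\nu$ lands in the $w_0$-image of the fundamental alcove and hence its representative shift is exactly $\sum_{i}\alpha_i^\ast=\rho^\vee$ (resp.\ $\sum_i\varpi_i=\rho$). That closes the gap cleanly and keeps your argument self-contained.
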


\subsubsection{Verification of Definition \ref{Def FT data}\eqref{FT data 3}}
Let us prove the remaining conditions of the shift system. 
Since the proofs are similar, we will treat both cases \eqref{non-super case} and \eqref{super case} simultaneously.
\begin{lemma}\label{baction}
The operators $\{f_i,h_i\}_{i\in I}$ give rise to an integrable $\mathfrak{b}$-action on $V_\lambda$.
\end{lemma}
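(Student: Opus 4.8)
The plan is to verify the defining relations of the negative Borel $\mathfrak{b}=\mathfrak{h}\oplus\mathfrak{n}_-$ on the Chevalley generators $\{h_i,f_i\}_{i\in\Pi}$ and then the finiteness needed for integrability. First I would record the structural facts. By construction $h_i=\lceil-\tfrac{1}{\sqrt p}\alpha_{i(0)}^\vee\rceil$ is built from Heisenberg zero-modes, so the $h_i$ commute and act semisimply with integer eigenvalues; hence $V_\lambda$ decomposes into $\mathfrak{h}$-weight spaces with weights in $P$. The operator $f_j$ sends $V_\lambda^{h=\lceil\mu\rceil}=M(-\sqrt p\mu)$ (resp.\ its tensor with $F$) to the weight space $\lceil\mu\rceil-\alpha_j$, so a weight count gives $[h_i,f_j]=-(\alpha_j,\alpha_i^\vee)f_j$. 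Crucially, the conformal vectors \eqref{conformal vector for the case of non-super} and \eqref{conformal vector for the case of super} were chosen precisely so that each $f_i$ preserves the conformal grading (equivalently $\Delta_{e^{\sqrt p\alpha_i}}=1$, resp.\ $\Delta_{e^{\sqrt p\alpha_r}\otimes\phi}=1$); therefore every finite-dimensional piece $V_{\lambda,\Delta}$ is stable under all $\{h_i,f_i\}$, and since $f_i$ strictly lowers the $\mathfrak{h}$-weight it is nilpotent there. Once the Lie-algebra relations hold, this local finiteness together with integrality of the weights is exactly what lets the $\mathfrak{b}$-action integrate to an algebraic $B$-action.

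The main content is the Serre relations $\ad(f_i)^{1-a_{ij}}(f_j)=0$ for $i\neq j$, where $a_{ij}=(\alpha_j,\alpha_i^\vee)$. Using the zero-mode Leibniz rule \eqref{eqref: zero mode is derivation}, $\ad(f_i)^n(f_j)$ is the zero-mode of the state $u_n:=(e^{\sqrt p\alpha_i}_{(0)})^n e^{\sqrt p\alpha_j}$ (with the fermion insertion when $i=r$ in the super case), so it suffices to prove $u_{1-a_{ij}}=0$ as a vector in $V_\lambda$. Here I would use the following vanishing criterion, valid because $e^{\sqrt p\alpha_i}_{(0)}$ preserves conformal weight: if $w$ lies in the Fock module $M(\gamma)$ at conformal level $d$ above its lowest weight, then $e^{\sqrt p\alpha_i}_{(0)}w=0$ whenever $(\sqrt p\alpha_i,\gamma)\ge d$, since otherwise $e^{\sqrt p\alpha_i}_{(0)}w$ would carry conformal weight below the bottom of $M(\sqrt p\alpha_i+\gamma)$. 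Applying this to $w=u_{n-1}$, which has conformal weight $1$ and lattice weight $\gamma_{n-1}=\sqrt p((n-1)\alpha_i+\alpha_j)$, and computing $\Delta_{e^{\gamma_{n-1}}}$ from \eqref{conformal weight of lattice point vector} with the background charge of $\omega$, the inequality collapses to $\tfrac{pn}{2}\big((n-1)|\alpha_i|^2+2(\alpha_i,\alpha_j)\big)+(n-1)\ge 0$. As $(n-1)|\alpha_i|^2+2(\alpha_i,\alpha_j)\ge 0$ precisely when $n\ge 1-a_{ij}$, the bound is met exactly at the threshold $n=1-a_{ij}$, forcing $u_{1-a_{ij}}=0$ and hence the Serre relation; the classical Serre length drops out of the degree count without any further input.

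I expect the genuinely delicate point to be the super case \eqref{super case} at the short direction $i=r$, where $f_r=(e^{\sqrt p\alpha_r}\otimes\phi)_{(0)}$ carries the free fermion. There the state $u_n$ lives in $V_{\sqrt p(Q+\lambda)}\otimes F$ with fermion number $n$, so the conformal weight must be split between the lattice and fermionic factors and the degree bookkeeping above has to be redone using $\Delta_\phi=\tfrac12$ and the anticommutativity of the modes $\phi(m)$; it is exactly the fermionic contribution that produces the longer Serre string $\ad(f_r)^{3}(f_{r-1})=0$ forced by $(\alpha_{r-1},\alpha_r^\vee)=-2$ (cf.\ \cite{IK1}). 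One should also note that $e^{\sqrt p\alpha_r}\otimes\phi$ is even, its parity being $|\sqrt p\alpha_r|^2+1\equiv p+1\equiv 0 \pmod 2$, so that $[f_r,-]$ is an ordinary commutator and the Leibniz reduction remains valid; the relations among $f_1,\dots,f_{r-1}$ are purely lattice-theoretic and follow from the non-super computation verbatim.
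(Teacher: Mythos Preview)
Your argument is correct and follows essentially the same approach as the paper: both the Serre relations and local nilpotency are obtained by comparing the (preserved) conformal weight of a candidate nonzero vector against the minimum conformal weight available in its target Fock space. The paper is marginally more direct on the Serre side---it simply computes $\Delta_{\sqrt{p}((1-c_{ij})\alpha_i+\alpha_j)}=2-c_{ij}$ (resp.\ $\tfrac{5}{2}$ at the fermionic edge $(i,j)\in\{(r-1,r),(r,r-1)\}$) and observes this exceeds $1$, rather than unwinding your iterative vanishing criterion---while your integrability argument (each $V_{\lambda,\Delta}$ is finite-dimensional and $f_i$ strictly lowers the $\mathfrak{h}$-weight there) is a bit cleaner than the paper's explicit quadratic-growth estimate, but these are cosmetic differences.
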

\begin{proof}
The proof is based on the evaluation of conformal weights.
Let us check the Serre relation 
\begin{align*}
(\operatorname{ad}f_i)^{1-c_{ij}}f_j=0    
\end{align*}
($[h_{i,\lambda}, f_j]=-c_{ij}f_j$ is clear).
Set $f_i=x_{(0)}$ and $f_j=y_{(0)}$.
By \eqref{eqref: zero mode is derivation}, we have
$(\operatorname{ad}f_i)^{1-c_{ij}}f_j
=
(f_i^{1-c_{ij}}y)_{(0)}$ and $f_i^{1-c_{ij}}y\in V_\lambda^{h=-((1-c_{ij})\alpha_i+\alpha_j)}$.
Since $\Delta_{f_i}=\Delta_{f_j}=0$, we have 
$\Delta_{f_i^{1-c_{ij}}y}=\Delta_y=1$.
On the other hand, by \eqref{conformal weight of lattice point vector} and \eqref{conformal vector for the case of non-super}, the minimal conformal weight of $V_\lambda^{h=-((1-c_{ij})\alpha_i+\alpha_j)}$ is given by
\begin{align*}
\Delta_{\sqrt{p}((1-c_{ij})\alpha_i+\alpha_j)}
=
\begin{cases}
\tfrac{5}{2}&\text{the case \eqref{super case} and\ }(i,j)=(r-1,r)\ \text{or}\ (r,r-1),
\\
2-c_{ij}&\text{otherwise}.
\end{cases}
\end{align*}
Since this is greater than $1$, we have $(\operatorname{ad}f_i)^{1-c_{ij}}f_j=0$.
We show that the $\mathfrak{b}$-action is integrable. 
The case of $h_i$ is clear and we consider the case of $f_i$.
For $\beta\in P$, pick any homogeneous element $A$ in $V_\lambda^{h=\beta}$. 
Then we have $f_{i}^{N}A\in V_\lambda^{h=\beta-N\alpha_i}$ for $N\in \mathbb{N}$. 
The conformal weight of every nonzero element in $V_\lambda^{h=\beta-N\alpha_i}$ is greater than or equal to $\Delta_{\sqrt{p}(\beta+N\alpha_i)+\lambda}$. 
By \eqref{conformal weight of lattice point vector}, $\Delta_{\sqrt{p}(\beta+N\alpha_i)+\lambda}$ is a quadratic function in $N$ with leading coefficient $p(\alpha_i,\alpha_i)>0$. 
Meanwhile, the conformal weight $f_{i}^{N}A$ will stay the same since $f_i$ preserves the conformal weight. 
Thus, if $N$ is big enough such that $\Delta_{\sqrt{p}(\beta+N\alpha_i)+\lambda}>\Delta_A$, then $f_i^NA=0$.
\end{proof}

From now on, we consider the $B$-action on $V_\lambda$ in Lemma \ref{baction}.

\begin{lemma}
The short screening operators $Q_{j,\lambda}\colon V_\lambda\rightarrow V_{\sigma_j\ast\lambda}(\sigma_j\uparrow\lambda)$ are $B$-module homomorphisms.
\end{lemma}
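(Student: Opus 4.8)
The plan is to reduce the statement to the commutation of $Q_{j,\lambda}$ with the generators $\{h_i,f_i\}_{i\in\Pi}$ of the $\mathfrak{b}$-action produced in Lemma~\ref{baction}; since $\mathfrak{b}=\mathfrak{h}\oplus\mathfrak{n}_-$ is generated by these operators, a $B$-module homomorphism is precisely a linear map intertwining all $h_i$ and all $f_i$. The $h_i$-equivariance is pure weight bookkeeping: each factor of the screening current lowers the lattice momentum by $-\tfrac1{\sqrt p}\alpha_j^\vee$ (resp.\ $-\tfrac1{\sqrt p}\alpha_j$ in the case \eqref{super case}), so $Q_{j,\lambda}$ shifts it by $-\tfrac{s}{\sqrt p}\alpha_j^\vee$, which is exactly the momentum distinguishing $V_{\sigma_j\ast\lambda}=V_{\lambda-\frac{s}{p}\alpha_j^\vee}$ from $V_\lambda$. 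Under the Cartan convention $h_i=\lceil-\tfrac1{\sqrt p}\alpha^\vee_{i(0)}\rceil$, the discrepancy between this momentum shift and the resulting integral weight shift is, by the very definition of $\uparrow$ in Example~\ref{example ours} (with $x=\tfrac1p\rho^\vee$, resp.\ $x=\tfrac1p\rho$), equal to $\sigma_j\uparrow\lambda$; this is the role played by the congruence $s\equiv(p\lambda+\rho^\vee,\alpha_j)\ (\mathrm{mod}\ p)$ in the choice of $s$. Hence $Q_{j,\lambda}$ lands in the twisted module $V_{\sigma_j\ast\lambda}(\sigma_j\uparrow\lambda)$ and is $h_i$-equivariant.

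For the $f_i$-commutation I would use that $f_i$ is an even zero-mode, hence a derivation by \eqref{eqref: zero mode is derivation}, so that $[f_i,Q_{j,\lambda}]=\int_{[\Gamma]}\sum_{k=1}^s s_j(z_1)\cdots[f_i,s_j(z_k)]\cdots s_j(z_s)\,dz_1\cdots dz_s$, where $s_j(z)$ denotes the screening current of \eqref{short screening operator in non-super case}. The single-term identity $[a_{(0)},Y(b,z)]=Y(a_{(0)}b,z)$ reduces each bracket to the field of the state $f_i\cdot u_j$, with $u_j$ the generating state of $s_j$. When $i\neq j$ the relevant lattice pairing is $(\sqrt p\alpha_i,-\tfrac1{\sqrt p}\alpha_j^\vee)=-(\alpha_i,\alpha_j^\vee)\ge0$, so by the standard vanishing $e^{\gamma}_{(k)}e^{\delta}=0$ for $k\ge-(\gamma,\delta)$ we get $f_i\cdot u_j=0$; thus $[f_i,s_j(z)]=0$ as a field and $[f_i,Q_{j,\lambda}]=0$ termwise. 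This is the elementary ``Serre-type'' half of the computation.

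The diagonal case $i=j$ is the genuine content. Here $(\sqrt p\alpha_j,-\tfrac1{\sqrt p}\alpha_j^\vee)=-2$, the product does not vanish, and one computes $f_j\cdot u_j=\epsilon\,(\sqrt p\alpha_j)_{(-1)}e^{\mu+\nu}$ with $\mu+\nu=\sqrt p\alpha_j-\tfrac1{\sqrt p}\alpha_j^\vee$ proportional to $\alpha_j$; when $\mu+\nu\neq0$ this equals a scalar multiple of $Te^{\mu+\nu}$, a total derivative, whence $[f_j,s_j(z)]=\partial_z(\cdots)$ and each summand integrates to zero over the closed cycle $[\Gamma]$. Rather than treat the degenerate momenta $\mu+\nu=0$ (which occur only for short $\alpha_j$ at the smallest admissible $p$, and never for $ADE$) by hand, I would feed in the rank-$1$ input directly: the restriction of $(f_j,Q_{j,\lambda})$ to the $\sll_2^j$-direction is exactly the Felder complex of \cite{AM}, in which the long and short screenings are known to commute, giving $[f_j,Q_{j,\lambda}]=0$ in all admissible cases. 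This is precisely the point at which the construction is ``reduced to rank $1$''.

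The super case \eqref{super case} is handled in the same way, the only new ingredient being the fermion: $f_r=(e^{\sqrt p\alpha_r}\otimes\phi)_{(0)}$ and $s_r(z)=Y(e^{-\frac1{\sqrt p}\alpha_r}\otimes\phi,z)$ are both even, so \eqref{eqref: zero mode is derivation} still applies. For $i\neq j$ the bosonic pairing is again non-negative and the bracket vanishes as above; for $i=j<r$ the total-derivative argument of the previous paragraph applies verbatim; and for $i=j=r$ the combined boson--fermion contraction produces the Felder structure, for which I would cite the rank-$1$ super results \cite{AM1,AM2}. I expect the main obstacle to be exactly this diagonal case: establishing $[f_j,Q_{j,\lambda}]=0$ cleanly, including the fermionic bookkeeping and the degenerate momenta, which is why the honest route is to invoke the known rank-$1$ Felder-complex statements rather than to push the total-derivative computation through every small $p$.
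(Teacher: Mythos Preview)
Your proof is correct and follows essentially the same scheme as the paper: reduce to $[f_i,Q_{j,\lambda}]=0$, kill the off-diagonal brackets $i\neq j$ via the lattice-OPE vanishing $e^{\gamma}_{(k)}e^{\delta}=0$ for $k\ge-(\gamma,\delta)$, and handle the diagonal $i=j$ case by recognizing $f_j\cdot u_j$ as a total derivative so that the integral over the closed cycle $[\Gamma]$ vanishes.

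The one substantive difference is in the diagonal step. The paper carries out the total-derivative computation explicitly, obtaining $e^{\sqrt p\alpha_i}_{(0)}e^{-\frac1{\sqrt p}\alpha_i}=\tfrac{p}{p-1}L_{-1}e^{(\sqrt p-\frac1{\sqrt p})\alpha_i}$ (and the analogous formula with the fermion at the short root), then pushes this through the multivariable integral for $s\ge2$ as a global $\tfrac{d}{dz}$ of the integrand. You instead propose to bypass the degenerate-momentum edge cases by importing the rank-$1$ commutation from the Felder complexes of \cite{AM,AM1,AM2}. Both arguments are valid; the paper's is self-contained and computational, while yours is structurally cleaner and dovetails with the orthogonal decomposition used in Lemma~\ref{lemma_felder}. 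Your caution about $\mu+\nu=0$ is in fact unnecessary in the regimes the paper considers (the coefficient $\tfrac{p}{p-1}$, or its short-root analogue, never blows up for the admissible $p$), but it does no harm.
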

\begin{proof}
It suffices to check that $[f_i,Q_{j,\lambda}]=0$.
First, we consider the case where $s=1$. 
By calculation, we have
$e^{\sqrt{p}\alpha_i}_{(r)}e^{-\frac{1}{\sqrt{p}}\alpha_j}=0$
for $r\geq 0$, $i\neq j$. 
Thus, we have $[f_{i},{Q}_{j,\lambda}]=0$ for $i\neq j$.  When $i\neq n$, we have
\begin{align}\label{for1}
    e^{\sqrt{p}\alpha_{i}}_{(0)}e^{-\frac{1}{\sqrt{p}}\alpha_i}=\sqrt{p}(\alpha_i)_{(-1)}e^{(\sqrt{p}-\frac{1}{\sqrt{p}})\alpha_i}\bold{1}=\tfrac{p}{p-1}(L_{-1}e^{(\sqrt{p}-\frac{1}{\sqrt{p}})\alpha_i})\bold{1}.
\end{align}
Similarly, when $i=n$, we have
\begin{align}\label{for2}
 e^{\sqrt{p}\alpha_{n}}_{(-1)}e^{-\frac{1}{\sqrt{p}}\alpha_n}=\tfrac{p}{p-1}(L_{-1}e^{(\sqrt{p}-\frac{1}{\sqrt{p}})\alpha_n})\bold{1}.
\end{align}
Using commutator formula, (\ref{for1}) and (\ref{for2}), we have 
\begin{align}\label{commutatorformu}
&[f_{i}, e^{-\frac{1}{\sqrt{p}}\alpha_j}(z)]=0,\quad \text{for}\;\; i\neq j,\\
&[f_{i},e^{-\frac{1}{\sqrt{p}}\alpha_i}(z)]=\tfrac{p}{p-1}\tfrac{d}{dz}e^{-\frac{1}{\sqrt{p}}\alpha_i}(z), \quad \text{for}\;\; i\neq n,\\
&[f_{n},(e^{-\frac{1}{\sqrt{p}}\alpha_n}\otimes \phi(-\tfrac12))(z)]=\tfrac{p}{p-1}\tfrac{d}{dz}e^{-\frac{1}{\sqrt{p}}\alpha_n}(z).
\end{align}
Note $(L_{-1}u)_{(0)}=0$ in every generalized vertex operator algebra.
we conclude that $[f_{i},{Q}_{j,\lambda}]=0$ for $1\leq i \leq n$. 
Now let us consider the case $s\geq 2$. 
Using (\ref{commutatorformu}), we have 
\begin{align*} 
&[f_{i},{Q}_{j,\lambda}]=0\quad \text{for} \;\; i\neq j,\\
&[f_{i},{Q}_{i,\lambda}]=\tfrac{p}{p-1}\int_{[\Gamma_{s_i+1}]}\tfrac{d}{dz}Q^i(z_{1},...,z_{s_j})dz_{1}...dz_{s_j},    
\end{align*}
where $\tfrac{d}{dz}$ is the total derivation of $Q^i(z_{i},...,z_{s_i})$. 
The result follows. 
\end{proof}

\begin{lemma}\label{lemma_felder}
For $i\in I$ and $\lambda\in\Lambda^{\sigma_i}$, we have the short exact sequence of $B$-modules
\begin{align*}
0\rightarrow \ker Q_{i,\lambda}
\rightarrow
V_\lambda
\rightarrow
\ker Q_{i,\sigma\ast\lambda}(\sigma_i\uparrow\lambda)
\rightarrow 0.
\end{align*}
Furthermore, $\ker Q_{i,\lambda}$ and $\ker Q_{i,\sigma\ast\lambda}$ are the (maximal) $P_i$-submodules of $V_\lambda$ and $V_{\sigma_i\ast\lambda}$, respectively.
If $\lambda\in\Lambda^{\sigma_i}$, then $V_\lambda$ has the $P_i$-module structure.
\end{lemma}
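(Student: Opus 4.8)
The plan is to verify the remaining axiom Definition \ref{Def FT data}\eqref{Felder complex of FT data} by localizing the problem to the single simple direction $\alpha_i$ and then importing the rank one structure theory of \cite{AM} (bosonic directions) and \cite{AM1,AM2} (the fermionic direction $i=r$ of case \eqref{super case}). The starting point is that both the long screening $f_i$ of Lemma \ref{baction} and the short screening $Q_{i,\lambda}$ of \eqref{short screening operator in non-super case}, \eqref{short screening operator in super case} are built only from the Heisenberg modes $\alpha_{i,(n)}$ and a lattice translation lying in $\C\alpha_i$ (together with the fermion $\phi$ when $i=r$). Decomposing $\mathfrak h=\C\alpha_i\oplus\alpha_i^{\perp}$, I would first write $V_\lambda$ as a direct sum of tensor products of a rank one Fock module in the $\alpha_i$-direction with an inert transverse factor attached to $\alpha_i^{\perp}$, on which $f_i$, $h_i$ and $Q_{i,\lambda}$ act trivially. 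Thus the entire analysis reduces to the rank one VOSA $V_{\sqrt p\Z\alpha_i}$ (resp.\ its extension by $F$ for $i=r$), equipped with the grading fixed by \eqref{conformal vector for the case of non-super} and \eqref{conformal vector for the case of super}.

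On this rank one factor the two screening momenta are Felder-dual: $(\sqrt p\alpha_i,-\tfrac{1}{\sqrt{p}}\alpha_i^{\vee})=-2$ in the bosonic directions, while $(\sqrt p\alpha_r,-\tfrac{1}{\sqrt{p}}\alpha_r)=-1$ is compensated by the fermion in the direction $i=r$ of case \eqref{super case}. Hence $f_i$ and $Q_{i,\lambda}$ are precisely the pair of dual screenings of the rank one Felder complex, and the hypothesis $\lambda\notin\Lambda^{\sigma_i}$ is exactly the condition that the $i$-direction residue $s$ appearing in \eqref{short screening operator in non-super case}, \eqref{short screening operator in super case} satisfies $1\le s\le p-1$, under which $Q_{i,\lambda}$ is the nonzero $s$-fold residue and the reflected module $V_{\sigma_i\ast\lambda}$ carries the complementary $(p-s)$-fold screening $Q_{i,\sigma_i\ast\lambda}$. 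I would then quote two facts from \cite{AM,AM1,AM2}: (i) $\ker Q_{i,\lambda}$ carries a natural $SL_2^i$-module structure extending the $(f_i,h_i)$-action, coming from the horizontal symmetry of the rank one (super)triplet algebra; and (ii) the rank one Felder complex is exact in the middle, i.e.\ $Q_{i,\sigma_i\ast\lambda}\circ Q_{i,\lambda}=0$ and $\im Q_{i,\lambda}=\ker Q_{i,\sigma_i\ast\lambda}$ after the weight twist recorded by $\sigma_i\uparrow\lambda$.

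Granting (i) and (ii), the assembly is formal. By (i) and Lemma \ref{Sug-4.2}, the $B$-submodules $W_{i,\lambda}:=\ker Q_{i,\lambda}\subseteq V_\lambda$ and $W_{i,\sigma_i\ast\lambda}:=\ker Q_{i,\sigma_i\ast\lambda}\subseteq V_{\sigma_i\ast\lambda}$ are in fact $P_i$-submodules. By (ii) the first isomorphism theorem turns $Q_{i,\lambda}$ into an isomorphism $V_\lambda/\ker Q_{i,\lambda}\xrightarrow{\ \sim\ }W_{i,\sigma_i\ast\lambda}(\sigma_i\uparrow\lambda)$, which is the asserted short exact sequence of $B$-modules; the weight twist is forced by axiom \eqref{-1 property of carry-over}, giving $(\sigma_i\uparrow\lambda,\alpha_i^{\vee})=-1$. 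Maximality of $\ker Q_{i,\lambda}$ among $P_i$-submodules is then immediate from Remark \ref{rmk easy facts on FT data}\eqref{maximality of W_i}: applying $H^{\bullet}(P_i\times_B-)$ to this sequence identifies $H^0(P_i\times_B V_\lambda)\simeq W_{i,\lambda}$, the maximal $P_i$-submodule. Finally, when $\lambda\in\Lambda^{\sigma_i}$ the residue satisfies $s\equiv 0\ (\mathrm{mod}\ p)$, so $Q_{i,\lambda}$ degenerates and the rank one theory identifies the whole $\alpha_i$-factor with an integrable $SL_2^i$-module; hence $V_\lambda$ is a $P_i$-module and $W_{i,\lambda}=V_\lambda$, again by Lemma \ref{Sug-4.2}.

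The main obstacle is precisely the rank one exactness statement (ii): proving that the image of the $s$-fold screening residue coincides with the kernel of the complementary $(p-s)$-fold residue, and that their composite vanishes, is the genuinely analytic input supplied by \cite{AM,AM1,AM2}, and it must be applied with care in the fermionic direction $i=r$ of case \eqref{super case}, where the rank one object is an $\osp(1|2)$-type extension by $F$ rather than an ordinary lattice VOA. The accompanying delicate bookkeeping is the matching of the conformal gradings of \eqref{conformal vector for the case of non-super} and \eqref{conformal vector for the case of super} with the normalizations of the rank one references, so that the parameter $s$, the reflected parameter $\sigma_i\ast\lambda$, and the shift $\sigma_i\uparrow\lambda$ are correctly aligned across the reduction.
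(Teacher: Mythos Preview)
Your proposal is correct and follows essentially the same strategy as the paper: orthogonally decompose $V_\lambda$ along the $\alpha_i$-direction so that $f_i$ and $Q_{i,\lambda}$ act only on the rank one factor, then import the Felder complex exactness and the $SL_2^i$-module structure on the kernels from \cite{AM,AM1}, and finally upgrade to $P_i$ via Lemma \ref{Sug-4.2}. The paper's proof adds one refinement your bookkeeping remark should make explicit: in case \eqref{non-super case} for non-simply-laced $\g$, the effective rank-one lattice parameter is $r^\vee m$ when $\alpha_i$ is long but only $m$ when $\alpha_i$ is short (since $|\sqrt{p}\alpha_i|^2$ differs), so the citations are \cite[Theorem 1.1--1.3]{AM} at these distinct values, and \cite[Theorem 6.1--6.3]{AM1} for the fermionic direction $i=r$ of case \eqref{super case}.
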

\begin{proof}
Let $M(0)^{i,\perp}$ and $M(0)^i$ be the rank $r-1$ and rank $1$ Heisenberg VOAs generated by $\{\varpi_{j}|j\not=i\}$ and $\alpha_i$, respectively.
Then we have the orthogonal decomposition
\begin{align*}
V_\lambda
\simeq
& M(0)\otimes e^{\sqrt{p}\lambda}\C[\sqrt{p}Q]
\ (\otimes F)
\\
\simeq
&(M(0)^{i,\perp}\oplus M(0)^i)\otimes e^{\sqrt{p}\lambda}\C[\sqrt{p}Q]
\ (\otimes F)
\\
\simeq
&M(0)^{i,\perp}
\otimes
(\bigoplus_{{\alpha}\in Q/\sim}
M(0)^{i}
\otimes 
e^{\sqrt{p}(\alpha+\lambda)}\C[\sqrt{p}\Z\alpha_i]
\ (\otimes F))\\
=:
&M(0)^{i,\perp}
\otimes
(\bigoplus_{{\alpha}\in Q/\sim}
V_{\alpha+\lambda}^{i})
\end{align*}
of $V_\lambda$, where $\alpha\sim\beta$ iff $\beta\in\alpha+\Z\alpha_i$ and the term $(\otimes F)$ is omitted in the case \ref{non-super case}.
Since $f_i$ and $Q_{i,\lambda}$ commutes with the $M(0)^{i,\perp}$-action, we have the orthogonal decompositions
\begin{align*}
&\ker Q_{i,\lambda}
\simeq 
M(0)^{i,\perp}
\otimes
(\bigoplus_{{\alpha}\in Q/\sim}
\ker Q_{i,\lambda}|_{V_{\alpha+\lambda}^{i}}),
\\ 
&\ker Q_{i,\sigma_i\ast\lambda}
\simeq 
M(0)^{i,\perp}
\otimes
(\bigoplus_{{\alpha}\in Q/\sim}
\ker Q_{i,\sigma_i\ast\lambda}|_{V_{\alpha+\sigma_i\ast\lambda}^{i}})
\end{align*}
of $\ker Q_{i,\lambda}$ and $\ker Q_{i,\sigma_i\ast\lambda}$, respectively.
If we can show that 
$\ker Q_{i,\lambda}|_{V_{\alpha+\lambda}^{i}}$
and
$\ker Q_{i,\sigma_i\ast\lambda}|_{V_{\alpha+\sigma_i\ast\lambda}^{i}}$
have the $SL_2^i$-module structures and the short exact sequence
\begin{align*}
0\rightarrow
\ker Q_{i,\lambda}|_{V_{\alpha+\lambda}^{i}}
\rightarrow
V_{\alpha+\lambda}^{i}
\rightarrow
\ker Q_{i,\sigma_i\ast\lambda}|_{V_{\alpha+\sigma_i\ast\lambda}^{i}}(\sigma_i\uparrow\lambda)
\rightarrow 0
\end{align*}
of $B^i$-modules, then by Lemma \ref{Sug-4.2} and the orthogonal decompositions above, the first-half assertion follows.
Therefore, it suffices to consider the cases of rank $1$.
These cases have already been studied, and from the choice of $\Lambda$ and the definition of $l$, the following can be directly shown.
\begin{itemize}
\item 
In the cases where $\alpha_i$ is long, the assertion follows from \cite[Theorem 1.1,1.2]{AM} for $p=r^\vee m$.
\item 
In the cases of $\eqref{non-super case}$ and $\alpha_i$ is short, the assertion follows from \cite[Theorem 1.1,1.2]{AM} for $p=m$.
\item
In the case of $\eqref{super case}$ and $i=r$, the assertion follows from \cite[Theorem 6.1,6.2]{AM1} for $p=2m-1$.
\end{itemize}
Similarly, the last assertion follows from \cite[Theorem 1.3]{AM} and \cite[Theorem 6.3]{AM1}, respectively.
\end{proof}

\subsubsection{Ramond sector of case \eqref{super case}}
Let us recall the notation in Section \ref{sectionramond}.
In the case \eqref{super case}, the simple current element $R$ is given by
$R=\tfrac{\pi i}{\sqrt{2m-1}}\varpi_r$.
Under the spectral flow twist $\mathcal{S}_R$, the conformal grading is deformed as
\begin{align}\label{ABCdefn}
\begin{split}
L_0
&=\operatorname{Res}_z z^2Y(\Delta(\tfrac{ \varpi_r}{\sqrt{2m-1}},z)\omega_{\mathrm{sh}}+e^{\Delta_x}\omega^{(s)},z)\\
&=\operatorname{Res}_z z^2Y(\omega_{\mathrm{sh}}+\omega^{(s)},z)+\tfrac{1}{16}+\operatorname{Res}_z z^2Y((\tfrac{(\varpi_r)_{(1)}}{ \sqrt{2m-1}}+\tfrac{1}{2}(\tfrac{(\varpi_{r})_{(1)}(\varpi_{r})_{(1)}-\sqrt{2m-1}(\varpi_{r})_{(2)}}{2m-1}))\omega_{\mathrm{sh}},z)\\
&=:\operatorname{Res}_z z^2Y(\omega_{\mathrm{sh}}+\omega^{(s)},z)+\tfrac{1}{16}+A(\alpha_r)_{(0)}+B(\alpha_{r-1})_{(0)}+C,
\end{split}
\end{align}
where $\Delta_x$ is defined in \cite{AM2}. Here $A,B,C\in\mathbb{C}$ are determined solely by the spectral-flow twist fixed above (case~(2), $B_r$, $p=2m-1$); in particular they do not depend on the lattice weight~$\mu$ or on~$\lambda\in\Lambda$.
In the Ramond sector $\mathcal{S}_RV_{\lambda}$, we have 
\begin{align}\label{twistedweight}
\Delta_{\mu}
=
\tfrac{1}{2}|\mu|^2-\sqrt{2m-1}(1-\tfrac{1}{2m-1})(\mu,\rho)+A(\alpha_r,\mu)+B(\alpha_{r-1},\mu)+C+\tfrac{1}{16}
\end{align}
Since a spectral flow twist is invertible, under the appropriate changes, the Ramond sector 
$\{\mathcal{S}_RV_\lambda\}_{\lambda\in\Lambda}$ of case \eqref{super case} also defines a shift system.
In conclusion, we obtain the following theorem.

\begin{theorem}\label{main theorem in Section 2}
Let $\g$ be a finite-dimensional simple Lie algebra and $p\in\Z_{\geq 1}$.
\begin{enumerate}
\item\label{main theorem in Section 2-1}
If $p=r^\vee m$ for $m\in\Z_{\geq 1}$, then we have a shift system $(\Lambda,\uparrow,\{V_\lambda\}_{\lambda\in\Lambda})$, where
\begin{itemize}
\item 
The $W$-module $\Lambda$ is \eqref{the representatives in non-super case}, where the $W$-action is induced from \eqref{W-action in non-super case}.
\item
The shift map $\uparrow$ is defined by $\sigma\uparrow\lambda=\sigma\ast\lambda_\bullet-(\sigma\ast\lambda)_\bullet\in P$.
\item 
The weight $B$-module $V_\lambda$ is \eqref{setup in the non-super case}.
The grading is defined by the conformal grading of \eqref{conformal vector for the case of non-super}.
The short screening operator $Q_{i,\lambda}$ is given by \eqref{short screening operator in non-super case}.
\end{itemize}
\item\label{main theorem in Section 2-2}
If $\g=B_r$ and $p=2m-1$ for $m\in\Z_{\geq 1}$, then we have a shift system $(\Lambda,\uparrow,\{V_\lambda\}_{\lambda\in\Lambda})$, where
\begin{itemize}
\item 
The $W$-module $\Lambda$ is \eqref{the representatives in super case}, where the $W$-action is induced from \eqref{W-action in super case}.
\item
The shift map $\uparrow$ is defined by $\sigma\uparrow\lambda=\sigma\ast\lambda_\bullet-(\sigma\ast\lambda)_\bullet\in P$.
\item 
The weight $B$-module $V_\lambda$ is \eqref{setup in the super case}.
The grading is defined by the conformal grading of \eqref{conformal vector for the case of super}.
The short screening operator $Q_{i,\lambda}$ is given by \eqref{short screening operator in super case}.
\end{itemize}
Furthermore, even if $V_\lambda$ is replaced by the Ramond sector $\mathcal{S}_RV_\lambda$, $(\Lambda,\uparrow,\{\mathcal{S}_RV_\lambda\}_{\lambda\in\Lambda})$
is a shift system.
\end{enumerate}
\end{theorem}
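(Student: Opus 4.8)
The plan is to verify, one group at a time, the three axioms of Definition \ref{Def FT data}, assembling the structural lemmas already in place. For the pair $(\Lambda,\uparrow)$ I would first observe that both cases are instances of Example \ref{example ours}: in case \eqref{main theorem in Section 2-1} one takes $x=\tfrac{1}{p}\rho^\vee$, so that $\Lambda$ is the representative \eqref{the representatives in non-super case} of $\tfrac{1}{p}Q^\ast/Q$ with the $W$-action \eqref{W-action in non-super case}, and in case \eqref{main theorem in Section 2-2} one takes $x=\tfrac{1}{p}\rho$, giving $\Lambda$ as in \eqref{the representatives in super case} with action \eqref{W-action in super case}. Since Example \ref{example ours} already checks well-definedness of the $W$-action, independence of the minimal expression, and the three sub-axioms of Definition \ref{Def FT data}\eqref{FT data 2}, this immediately yields Definition \ref{Def FT data}\eqref{FT data 1} and \eqref{FT data 2}. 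The only point I would isolate here is that the parity condition $\lambda_r+(\lambda^\bullet,\alpha_r^\vee)\equiv 1$ built into \eqref{the representatives in super case} is precisely what is forced by demanding that the long screening operator $(e^{\sqrt{p}\alpha_r}\otimes\phi)_{(0)}$, and hence the whole $B$-action, be even.

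Next I would establish Definition \ref{Def FT data}\eqref{FT data 3}. The grading \eqref{conformal grading of FT data} is supplied by the conformal grading for the shifted conformal vector \eqref{conformal vector for the case of non-super} (resp. \eqref{conformal vector for the case of super}): by \eqref{conformal weight of lattice point vector} each $V_{\lambda,\Delta}$ is a finite-dimensional weight space for $(L_0,h_1,\dots,h_r)$, and the defining feature of these $\omega$'s is that the $f_i$ are conformal-weight preserving. The integrable $\mathfrak{b}$-action is exactly Lemma \ref{baction}, whose proof rests on the observation that the minimal conformal weight of $V_\lambda^{h=-((1-c_{ij})\alpha_i+\alpha_j)}$ exceeds $1$, forcing the Serre relations, together with the quadratic-in-$N$ growth of $\Delta_{\sqrt{p}(\beta+N\alpha_i)+\lambda}$, which gives integrability. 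That each short screening operator $Q_{i,\lambda}$ is a $B$-module homomorphism, i.e. commutes with every $f_i$, is the content of the commutator computation \eqref{commutatorformu}. Finally the Felder complex \eqref{Felder complex of FT data} is Lemma \ref{lemma_felder}: one reduces, via the orthogonal Heisenberg splitting $V_\lambda\simeq M(0)^{i,\perp}\otimes(\bigoplus_\alpha V^i_{\alpha+\lambda})$, to the rank $1$ situation, invokes \cite{AM} (for long and short roots in the non-super case) and \cite{AM1} (for $i=r$ in the super case) to obtain the short exact sequence and the $SL_2^i$-structure on the kernels, and then globalizes through Lemma \ref{Sug-4.2}.

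For the Ramond-sector statement in case \eqref{main theorem in Section 2-2}, I would argue that the spectral flow twist $\mathcal{S}_R$ with $R=\tfrac{\pi i}{\sqrt{2m-1}}\varpi_r$ is invertible and transports all module structures faithfully; the short screening operators and the $B$-action are carried over verbatim, so the short exact sequences of Lemma \ref{lemma_felder} survive. The one genuine check is that the deformed $L_0$ of \eqref{ABCdefn} still grades $\mathcal{S}_R V_\lambda$ into finite-dimensional weight $B$-submodules, which follows from \eqref{twistedweight}: the added linear terms $A(\alpha_r,\mu)+B(\alpha_{r-1},\mu)+C+\tfrac{1}{16}$ leave the quadratic-in-$\mu$ leading behavior untouched, preserving both finite-dimensionality and the integrability estimate. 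I expect the main obstacle to be precisely the rank $1$ input to Lemma \ref{lemma_felder}: the exactness of the Felder complex and the $SL_2^i$-module structure on its kernels are where all the analytic content lives, and the delicate part is matching the normalizations of \cite{AM,AM1,AM2} with the conformal-weight bookkeeping of the present setup (in particular the correspondences $p=r^\vee m$ for long roots, $p=m$ for short roots, and $p=2m-1$ in the super case).
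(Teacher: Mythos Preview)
Your proposal is correct and follows essentially the same route as the paper: the pair $(\Lambda,\uparrow)$ is handled via Example \ref{example ours} with $x=\tfrac{1}{p}\rho^\vee$ or $x=\tfrac{1}{p}\rho$, and Definition \ref{Def FT data}\eqref{FT data 3} is assembled from Lemma \ref{baction}, the commutator computation for $[f_i,Q_{j,\lambda}]$, and Lemma \ref{lemma_felder}, with the Ramond sector obtained by transporting everything through the invertible spectral flow $\mathcal{S}_R$. One small correction: the parity constraint $\lambda_r+(\lambda^\bullet,\alpha_r^\vee)\equiv 1\pmod 2$ in \eqref{the representatives in super case} is not imposed to make the $B$-action even (that is achieved by tensoring with $\phi$), but is simply the condition characterizing which $(\lambda^\bullet,\lambda_\bullet)$ actually lie in the representative set for $\tfrac{1}{p}Q^\ast/Q$ when $p$ is odd and $\g=B_r$.
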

Finally, the following is proved in the same manner as \cite[Lemma 2.18, 2.27]{Sug2} so we omit the proof.
\begin{lemma}
\label{lemm: contragredient dual in our cases}
In Theorem \ref{main theorem in Section 2}\eqref{main theorem in Section 2-1} (resp. \eqref{main theorem in Section 2-2}), $V_{\lambda}^\ast(-2\rho)$ is isomorphic to $V_{w_0\ast\lambda'}(-\rho^\vee)$ (resp. $V_{w_0\ast\lambda'}(-\rho)$) as $B$- and $V_{0}$-modules, where $\lambda'$ is the representative of $-w_0(\lambda)$.
In particular, for $0\leq n\leq l(w_0)$, we have
\begin{align*}
&H^n(G\times_BV_{\lambda}^\ast(-2\rho))\simeq H^n(G\times_BV_{w_0\ast\lambda'}(-\rho^\vee))\\
&\text{(resp. $H^n(G\times_BV_{\lambda}^\ast(-2\rho))\simeq H^n(G\times_BV_{w_0\ast\lambda'}(-\rho)))$}
\end{align*}
as $G$- and $H^0(G\times_BV_0)$-modules.
\end{lemma}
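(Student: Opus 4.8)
The plan is to first reduce the cohomological statement to the underlying isomorphism of $B$- and $V_0$-modules. Once $V_\lambda^\ast(-2\rho)\simeq V_{w_0\ast\lambda'}(-\rho^\vee)$ (resp. $\simeq V_{w_0\ast\lambda'}(-\rho)$) is established as $B$- and $V_0$-modules, applying the functor $H^n(G\times_B-)$ to both sides and invoking the naturality of the Feigin--Tipunin construction (so that the induced maps are compatible with the $G$-action and the $H^0(G\times_BV_0)$-module structure, as recorded after Theorem \ref{main theorem of part 1}) yields the cohomology isomorphism for every $0\le n\le l(w_0)$ at once. Thus the entire content lies in the module isomorphism, which I would establish exactly as in \cite[Lemma 2.18, 2.27]{Sug2}.

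For the module isomorphism I would start from the general description of the contragredient dual of an irreducible lattice VOSA-module. By \cite{FHL} we know $V_{\sqrt p(Q+\lambda)}^\ast\simeq V_{\sqrt p(Q+\lambda^\dagger)}$ as $V_0$-modules for a unique $\lambda^\dagger\in\tfrac1pQ^\ast/Q$, and the first task is to pin down $\lambda^\dagger$. The key point is that the conformal vector \eqref{conformal vector for the case of non-super} (resp. \eqref{conformal vector for the case of super}) is the shifted one of \eqref{shifted conformall vector} with shift vector $\gamma=\sqrt p(\rho-\tfrac1p\rho^\vee)$ (resp. $\gamma=\sqrt p(1-\tfrac1p)\rho$). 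Since $\gamma\ne0$, the Heisenberg states $h=h_{(-1)}\mathbf 1$ are no longer primary: a direct computation with \eqref{shifted conformall vector} gives $L_1h=-2(\gamma,h)\mathbf 1$, and feeding this into the contragredient formula of Definition \ref{def: contragredient dual and shpectral flow twist}\eqref{def: contragredient dual} shows that the dual of a Fock vector $e^\nu$ ($\nu\in\sqrt p(Q+\lambda)$) carries Heisenberg weight $2\gamma-\nu$ rather than $-\nu$. Matching Heisenberg weights (and, as a consistency check, conformal weights via \eqref{conformal weight of lattice point vector}) forces $\sqrt p(Q+\lambda^\dagger)=2\gamma-\sqrt p(Q+\lambda)$, i.e. $\lambda^\dagger\equiv\tfrac2{\sqrt p}\gamma-\lambda\pmod Q$; using $2\rho\in Q$ this reads $\lambda^\dagger\equiv-\lambda-\tfrac2p\rho^\vee$ (resp. $-\lambda-\tfrac2p\rho$). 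I would then identify this with the target: writing the $W$-action of Example \ref{example ours} with $x=\tfrac1p\rho^\vee$ (resp. $x=\tfrac1p\rho$) and using $w_0\rho^\vee=-\rho^\vee$ (resp. $w_0\rho=-\rho$), one computes $w_0\ast\lambda'=-\lambda-\tfrac2p\rho^\vee$ (resp. $-\lambda-\tfrac2p\rho$) for $\lambda'$ the representative of $-w_0(\lambda)$, so that $V_{\lambda^\dagger}=V_{w_0\ast\lambda'}$ as $V_0$-modules.

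It remains to upgrade this $V_0$-module isomorphism to a $B$-equivariant one carrying the correct Cartan twist, and this is the step I expect to be the main obstacle. The $\mathfrak n_-$-part of the $B$-action is given by the zero modes $f_i=e^{\sqrt p\alpha_i}_{(0)}$ (resp. $f_r=(e^{\sqrt p\alpha_r}\otimes\phi)_{(0)}$), which are zero modes of elements of $V_0$, so their equivariance is automatic from the $V_0$-module isomorphism; the delicate point is the Cartan grading, since $h_i=\lceil-\tfrac1{\sqrt p}\alpha^\vee_{i(0)}\rceil$ of \eqref{the Cartan action for our cases} is a rounding of a Heisenberg weight and is not a genuine $V_0$-operator. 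Tracking the weight of $(e^\nu)^\ast$ through the contragredient $B$-action, the shift $2\gamma$ above, and the two twists $\C_{-2\rho}$ and $\C_{-\rho^\vee}$ (resp. $\C_{-\rho}$), the claim reduces to an elementary identity for the ceiling function $\lceil\cdot\rceil$ of \eqref{the Cartan action for our cases}; I would verify this on the boundary values using the defining inequalities $1\le\lambda_i\le r_i^\vee m$ of the representative set \eqref{the representatives in non-super case} and the minuscule constraint on $\lambda^\bullet$, exactly as in the appendix-type computation of \cite[Lemma 2.18]{Sug2}.

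In the super case of Theorem \ref{main theorem in Section 2}\eqref{main theorem in Section 2-2} two further points must be folded in. First, the free fermion satisfies $F^\ast\simeq F$, so $F$ contributes no net lattice shift beyond the sign/cocycle bookkeeping attached to the odd screening $f_r$, and the twist on the target is $-\rho$ (coming from $\gamma\propto\rho$) rather than $-\rho^\vee$. Second, the Ramond sector $\mathcal S_RV_\lambda$ is handled by noting that the spectral flow twist $\mathcal S_R$ of Definition \ref{def: contragredient dual and shpectral flow twist}\eqref{def: spectral flow twist} is invertible and commutes with the $B$-action, so the isomorphism for $V_\lambda$ transports to $\mathcal S_RV_\lambda$ unchanged. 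This is why the statement may be asserted to follow in the same manner as \cite[Lemma 2.18, 2.27]{Sug2}.
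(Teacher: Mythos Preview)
Your proposal is correct and aligns with the paper's own treatment: the paper gives no argument and simply states that the lemma ``is proved in the same manner as \cite[Lemma 2.18, 2.27]{Sug2} so we omit the proof,'' and your sketch is precisely a reconstruction of that argument (contragredient shift by $2\gamma$ on Heisenberg weights, identification with $w_0\ast\lambda'$ via the dot action with $x=\tfrac1p\rho^\vee$ or $\tfrac1p\rho$, and the ceiling-function bookkeeping for the Cartan twist). There is nothing to add or correct.
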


\section{Main results I}
\label{section: main results}
In this section, by applying Theorem \ref{main theorem of part 1} to the shift systems in Theorem \ref{main theorem in Section 2}, we show that the main results of \cite{Sug} and part of \cite{Sug2} also hold for our cases.
After stating the definitions and properties common to the two cases of Theorem \ref{main theorem in Section 2}, we will examine each case in Section \ref{section: non-super cases} and \ref{section: super cases}, respectively.

\begin{definition}
\label{def: multiplet W-algebra}
The \textit{multiplet $W$-(super)algebra} is defined by the vertex operator (super) subalgebra
\begin{align*}
W_{\sqrt{p}Q}
=
\bigcap_{i\in I}\ker Q_{i,0}|_{V_{0}}
\end{align*}
of $V_0$.
For each $\lambda\in\Lambda$, 
$\bigcap_{i\in I}\ker Q_{i,\lambda}|_{V_\lambda}$ is a $W_{\sqrt{p}Q}$-submodule of $V_{\lambda}$.
\end{definition}
On the other hand, in the same manner as \cite[Corollary 2.21]{Sug2}, the $0$-th sheaf cohomology
\begin{align*}
H^0(G\times_BV_0)
\end{align*}
of the sheaf associated with the $G$-equivariant vector bundle $G\times_BV_0$ over the flag variety $G/B$ has the VO(S)A structure induced from $V_0$, and the $n$-th sheaf cohomology $H^n(G\times_BV_\lambda(\mu))$ is an $H^0(G\times_BV_0)$-module (for more detail, see \cite[Section 2.1]{Sug2} and the discussion just before \cite[Corollary 2.21]{Sug2}).
Denote
\begin{align}\label{classicaldecomposition}
H^0(G\times_BV_\lambda)
\simeq
\bigoplus_{\alpha\in P_+\cap Q}
L_{\alpha+\lambda^\bullet}
\otimes
\mathcal{W}_{-\alpha+\lambda}
\end{align}
the decomposition of $H^0(G\times_BV_\lambda)$ as $G$-module, where $L_\beta$ is the irreducible $\g$-module with highest weight $\beta\in P_+$ and $\mathcal{W}_{-\alpha+\lambda}$ is the multiplicity of a weight vector of $L_{\alpha+\lambda^\bullet}$.
In our case, 
\begin{align*}
H^0(G\times_BV_0)^G
=V_0^B
=\bigcap_{i\in I}\ker f_i|_{V_0^{h=0}}
=\mathcal{W}_0
\end{align*} 
is a vertex operator (super) subalgebra of $H^0(G\times_BV_0)$ and each 
$\mathcal{W}_{-\alpha+\lambda}$
is a $\mathcal{W}_0$-module.

Let us use the same notation and setup in Theorem \ref{main theorem in Section 2}.
The proofs are basically the same as in \cite{Sug,Sug2}, so we will only outline them.
By Theorem \ref{main theorem of part 1} and Theorem \ref{main theorem in Section 2} we obtain the following.
\begin{theorem}\label{main theorem in Section 3}
Let $(\Lambda,\uparrow,\{V_\lambda\}_{\lambda\in\Lambda})$ be the shift system in Theorem \ref{main theorem in Section 2}\eqref{main theorem in Section 2-1}(resp. \eqref{main theorem in Section 2-2}).
\begin{enumerate}
\item\label{main theorem 3-1}
The evaluation map 
\begin{align*}
\operatorname{ev}\colon H^0(G\times_BV_\lambda)
\rightarrow
\bigcap_{i\in I}\ker Q_{i,\lambda},\ \ s\mapsto s(\operatorname{id}_{G/B})
\end{align*}
is injective, and is isomorphic iff $\lambda\in\Lambda$ satisfies the condition \eqref{weak condition}.
In particular, for any $p\in \Z_{\geq 2}$, we have the isomorphism of VOAs
$H^0(G\times_BV_{\sqrt{p}Q})\simeq W_{\sqrt{p}Q}$.
\item\label{main theorem 3-1(2)}
For $\lambda\in\Lambda$ such that $(p\lambda_\bullet+\rho^\vee,{}^L\theta)\leq p$ (resp. $(p\lambda_\bullet+\rho,{}^L\theta)\leq p$),
we have a natural $G$- and $W_{\sqrt{p}Q}$-module isomorphism
\begin{align*}
H^n(G\times_BV_\lambda)\simeq H^{n+l(w_0)}(G\times_BV_{w_0\ast\lambda}(-\rho^\vee))\\
(\text{resp.}\quad H^n(G\times_BV_\lambda)\simeq H^{n+l(w_0)}(G\times_BV_{w_0\ast\lambda}(-\rho))).
\end{align*}
In particular, $H^{n>0}(G\times_BV_\lambda)\simeq 0$ and $H^0(G\times_BV_\lambda)\simeq H^0(G\times_BV_{-w_0(\lambda)})^\ast$ as $W_{\sqrt{p}Q}$-modules.
\end{enumerate}
\end{theorem}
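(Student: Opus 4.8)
The plan is to deduce the whole statement by specializing the abstract shift-system results of Part \ref{part 1} to the two concrete shift systems produced in Theorem \ref{main theorem in Section 2}, so that essentially no new computation is required beyond bookkeeping. The one identification I would record first is that, by Lemma \ref{lemma_felder}, the maximal $P_i$-submodule $W_{i,\lambda}$ of $V_\lambda$ coincides with $\ker Q_{i,\lambda}$; hence $\bigcap_{i\in I}W_{i,\lambda}=\bigcap_{i\in I}\ker Q_{i,\lambda}$, which matches the abstract target $\bigcap_{i\in\Pi}\ker Q_{i,\lambda}|_{V_\lambda}$ of Theorem \ref{main theorem of part 1}\eqref{main theorem 1(1)} with the concrete joint kernel appearing here.

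For part \eqref{main theorem 3-1} I would simply invoke Theorem \ref{main theorem of part 1}\eqref{main theorem 1(1)}: the evaluation map is always injective and is an isomorphism precisely when $\lambda$ satisfies \eqref{weak condition}. For the final assertion $H^0(G\times_BV_{\sqrt{p}Q})\simeq W_{\sqrt{p}Q}$ I would check that the vacuum $\lambda=0$ (i.e. $\lambda^\bullet=0$, $\lambda_\bullet=0$) satisfies \eqref{weak condition}, which is a direct computation from the explicit $W$-action \eqref{W-action in non-super case} (resp. \eqref{W-action in super case}) together with axiom \eqref{-1 property of carry-over}. The upgrade from a $G$-module isomorphism to an isomorphism of VOAs then follows from the naturality of the FT construction with respect to the vertex-algebraic structure, as in \cite[Section 2.1, Corollary 2.21]{Sug2}.

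For part \eqref{main theorem 3-1(2)} I would apply Theorem \ref{main theorem of part 1}\eqref{main theorem 1(2)}. By Lemma \ref{lemm: strong condition for our cases}, the alcove condition $(p\lambda_\bullet+\rho^\vee,{}^L\theta)\le p$ (resp. $(p\lambda_\bullet+\rho,{}^L\theta)\le p$) is exactly \eqref{strong condition}, and moreover $w_0\uparrow\lambda=-\rho^\vee$ (resp. $-\rho$); substituting this into the abstract isomorphism $H^n(G\times_BV_\lambda)\simeq H^{n+l(w_0)}(G\times_BV_{w_0\ast\lambda}(w_0\uparrow\lambda))$ yields the stated form. The vanishing $H^{n>0}(G\times_BV_\lambda)\simeq0$ then follows because, for $n\ge1$, the right-hand side sits in cohomological degree $n+l(w_0)>l(w_0)=\dim_\C G/B$, while each conformal-degree piece of $G\times_BV_{w_0\ast\lambda}(-\rho^\vee)$ is a finite-rank vector bundle on $G/B$, whose sheaf cohomology vanishes above degree $l(w_0)$. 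Finally, the contragredient-dual identity $H^0(G\times_BV_\lambda)\simeq H^0(G\times_BV_{-w_0(\lambda)})^\ast$ I would obtain by combining the $n=0$ isomorphism above, Serre duality on $G/B$ (whose canonical bundle is $\mathcal{O}(-2\rho)$), and the contragredient-dual computation of Lemma \ref{lemm: contragredient dual in our cases}; this is precisely the mechanism of Lemma \ref{Sug2-3-2}, so it suffices to verify that its hypotheses hold with $\lambda'$ the representative of $-w_0(\lambda)$, using $w_0\uparrow\lambda=w_0\uparrow\lambda'=-\rho^\vee$.

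I expect the only genuinely non-formal inputs to be the ones already isolated as standalone lemmas and hence assumable here: the exactness of the rank-$1$ Felder complexes underlying Lemma \ref{lemma_felder}, and the combinatorial translation of \eqref{strong condition} into the alcove condition together with the evaluation of $w_0\uparrow\lambda$ in Lemma \ref{lemm: strong condition for our cases}. Within the present proof the main subtlety is not the existence of the isomorphisms but their compatibility with the extra algebraic structure: one must check that the evaluation map, the Borel--Weil--Bott connecting isomorphisms, and Serre duality are all natural with respect to the induced $W_{\sqrt{p}Q}$-module (and VOA) structures. For the self-duality statement in particular this requires matching the vertex-algebraic contragredient dual with the $B$-module contragredient dual, which is exactly the role played by Lemma \ref{lemm: contragredient dual in our cases}.
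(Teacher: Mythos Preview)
Your proposal is correct and follows essentially the same route as the paper. The paper's own proof is extremely terse: it declares that everything except the contragredient-dual statement follows immediately from Theorem \ref{main theorem of part 1} and Theorem \ref{main theorem in Section 2}, and then records the chain
\[
H^0(G\times_BV_\lambda)\simeq H^{l(w_0)}(G\times_BV_{w_0\ast\lambda}(-\rho'))\simeq H^{l(w_0)}(G\times_BV_{-w_0(\lambda)}^\ast(-2\rho))\simeq H^0(G\times_BV_{-w_0(\lambda)})^\ast
\]
using Lemma \ref{lemm: strong condition for our cases}, Lemma \ref{lemm: contragredient dual in our cases}, and Serre duality, exactly as you outline. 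Your expansion of the omitted steps (identifying $W_{i,\lambda}=\ker Q_{i,\lambda}$ via Lemma \ref{lemma_felder}, checking \eqref{weak condition} at $\lambda=0$, and deducing the vanishing from $\dim_\C G/B=l(w_0)$) is accurate and in the intended spirit.
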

\begin{proof}
It suffices to show the last assertion.
By Lemma \ref{lemm: strong condition for our cases}, Lemma \ref{lemm: contragredient dual in our cases} and Serre duality, we have
\begin{align*}
H^0(G\times_BV_\lambda)
&\simeq 
H^{l(w_0)}(G\times_BV_{w_0\ast\lambda}(-\rho'))\\
&\simeq
H^{l(w_0)}(G\times_BV_{-w_0(\lambda)}^\ast(-2\rho))\\
& \simeq
H^0(G\times_BV_{-w_0(\lambda)})^\ast
\end{align*}
and thus the claim is proved, where $\rho'$ in the second term is $\rho^\vee$ (case \eqref{non-super case}) or $\rho$ (case \eqref{super case}), respectively.
\end{proof}

\subsection{The non-super case \eqref{non-super case}}
\label{section: non-super cases}

By applying Corollary \ref{Sug2-3-15} and Lemma \ref{lemma: general simplicity theorem for lambda is zero}, 
we obtain the following:
\begin{corollary}
\label{character formula for non-super case}
\cite[Theorem 1.2, Lemma 4.21]{Sug}
For $\alpha\in P_+\cap Q$ and $\lambda\in\Lambda$ such that $(p\lambda_\bullet+\rho^\vee,{}^L\theta)\leq p$, 
\begin{align}
\label{character of non-super case}
\ch_q\mathcal{W}_{-\alpha+\lambda}
=
\sum_{\sigma\in W}(-1)^{l(\sigma)}
\ch_q V_{\sigma\ast\lambda}^{h=\alpha+\lambda^\bullet-\sigma\uparrow\lambda}
=\sum_{\sigma\in W}(-1)^{l(\sigma)}
\tfrac{q^{\frac{1}{2p}|-p\sigma(\alpha+\lambda^\bullet+\rho)+p\lambda_\bullet+\rho^\vee|^2}}{\eta(q)^r}.
\end{align}
\end{corollary}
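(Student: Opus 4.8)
The plan is to treat the two equalities separately. The first, $\ch\mathcal{W}_{-\alpha+\lambda}=\sum_{\sigma\in W}(-1)^{l(\sigma)}\ch V_{\sigma\ast\lambda}^{h=\alpha+\lambda^\bullet-\sigma\uparrow\lambda}$, is nothing but the Weyl-type character formula already proved abstractly in Part \ref{part 1}; the second, the passage to the closed expression $\sum_{\sigma}(-1)^{l(\sigma)}q^{\frac{1}{2p}|-p\sigma(\alpha+\lambda^\bullet+\rho)+p\lambda_\bullet+\rho^\vee|^2}/\eta(q)^r$, is an explicit evaluation of the Heisenberg (Fock) characters making up the weight spaces of the lattice module $V_\lambda=V_{\sqrt{p}(Q+\lambda)}$.

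For the first equality, I would begin by noting that the hypothesis $(p\lambda_\bullet+\rho^\vee,{}^L\theta)\leq p$ is, by Lemma \ref{lemm: strong condition for our cases}\eqref{lemm: strong condition for non-super case}, precisely the strong condition \eqref{strong condition} for the shift system of Theorem \ref{main theorem in Section 2}\eqref{main theorem in Section 2-1}. Hence Corollary \ref{Sug2-3-15} applies and delivers, via formula \eqref{Weyl type character formula for multiplicity W}, not only the stated expression but also the equivalent dot-action form $\ch\mathcal{W}_{-\alpha+\lambda}=\sum_{\sigma\in W}(-1)^{l(\sigma)}\ch V_\lambda^{h=\sigma\circ(\alpha+\lambda^\bullet)}$, where $\sigma\circ\beta=\sigma(\beta+\rho)-\rho$. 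I would carry out the explicit computation on this last form, since it produces $\sigma(\alpha+\lambda^\bullet+\rho)$ directly and avoids the shift map.

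For the second equality the key structural point is that each nonzero Cartan weight space of $V_\lambda$ is a single rank-$r$ Heisenberg Fock module. Indeed the grading operators $h_i=\lceil-\tfrac{1}{\sqrt{p}}\alpha_{i(0)}^\vee\rceil$ are constant on each summand $M(\sqrt{p}\kappa)$, and the assignment $\kappa\mapsto\lceil-\kappa\rceil$ is injective on the coset $\lambda+Q$: if $\kappa,\kappa'\in\lambda+Q$ have equal image then $(\kappa-\kappa',\alpha_i^\vee)$ is an integer in $(-1,1)$, hence zero for all $i$. I would then solve $\lceil-\kappa\rceil=\sigma\circ(\alpha+\lambda^\bullet)$ inside $\lambda+Q$; using that $0\leq(\lambda_\bullet,\alpha_i^\vee)<1$ for every $i$, which follows from the explicit ranges $1\leq\lambda_i\leq r_i^\vee m$ in \eqref{the representatives in non-super case}, the unique solution is $\kappa=\rho+\lambda_\bullet-\sigma(\alpha+\lambda^\bullet+\rho)$, and membership $\kappa\in\lambda+Q$ follows from $\rho-\sigma\rho,\ \lambda^\bullet-\sigma\lambda^\bullet,\ \sigma\alpha\in Q$. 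Finally, combining the Heisenberg graded dimension $\prod_{n\geq1}(1-q^n)^{-r}=q^{r/24}\eta(q)^{-r}$ with the conformal-weight formula \eqref{conformal weight of lattice point vector} for $e^{\sqrt{p}\kappa}$ gives $\ch_q M(\sqrt{p}\kappa)=q^{\frac12|\sqrt{p}\kappa-\gamma|^2}/\eta(q)^r$, where $\gamma=\sqrt{p}(\rho-\tfrac1p\rho^\vee)$ is the shift in \eqref{conformal vector for the case of non-super}; substituting $\kappa$ yields $\sqrt{p}\kappa-\gamma=\tfrac{1}{\sqrt{p}}(-p\sigma(\alpha+\lambda^\bullet+\rho)+p\lambda_\bullet+\rho^\vee)$, so that $\tfrac12|\sqrt{p}\kappa-\gamma|^2$ is exactly the claimed exponent.

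The routine part is the Heisenberg character bookkeeping; the main obstacle is the weight-to-lattice-point dictionary through the ceiling operator. Concretely I must verify that no extra ceiling correction arises, i.e. $\lfloor(\lambda_\bullet,\alpha_i^\vee)\rfloor=0$ for all $i$ — this is exactly where the normalization of the representative $\Lambda$ enters — and confirm that the Fock module singled out by $\sigma\circ(\alpha+\lambda^\bullet)$ genuinely sits in the coset $\lambda+Q$ rather than a shifted one. Once this identification is pinned down, matching the quadratic exponent is a direct expansion of $|\sqrt{p}\kappa-\gamma|^2$.
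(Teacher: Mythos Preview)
Your argument is correct and follows the same route as the paper: the paper simply states that the corollary is obtained ``by applying Corollary \ref{Sug2-3-15}'' (citing \cite[Theorem 1.2, Lemma 4.21]{Sug} for the explicit closed form), while you have additionally written out the Fock-module bookkeeping that converts the abstract Weyl-type formula \eqref{Weyl type character formula for multiplicity W} into the explicit $\eta$-quotient. Your verification that $0\leq(\lambda_\bullet,\alpha_i^\vee)<1$ from the explicit ranges in \eqref{the representatives in non-super case}, and the resulting identification $\sqrt{p}\kappa-\gamma=\tfrac{1}{\sqrt{p}}(-p\sigma(\alpha+\lambda^\bullet+\rho)+p\lambda_\bullet+\rho^\vee)$, are exactly the details the paper leaves implicit.
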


\begin{corollary}
\label{vacuum simplicity for non-super case}
\cite[Section 3.2]{Sug2}
For $m\geq{}^Lh^\vee-1$ and $\lambda=\lambda^\bullet$, $W_{\sqrt{p}Q}$ and $W_{\sqrt{p}P}$ are simple as VOA and generalized VOA, respectively, and $W_{\sqrt{p}(Q-\lambda^\bullet)}$ is simple as $W_{\sqrt{p}Q}$-module.
Furthermore, for $m\geq{}^Lh^\vee-1$ and $\beta\in P_+$, $\mathcal{W}_{-\beta}$ is simple as $\mathcal{W}_0$-module.
\end{corollary}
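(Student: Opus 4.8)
The plan is to obtain all four assertions as a direct application of the abstract simplicity result Lemma \ref{lemma: general simplicity theorem for lambda is zero} to the shift system of Theorem \ref{main theorem in Section 2}\eqref{main theorem in Section 2-1}; the real work lies entirely in verifying its hypotheses. First I would translate the numerical condition: taking $\lambda=\lambda^\bullet$ forces $\lambda_\bullet=0$, and by Lemma \ref{lemm: strong condition for our cases}\eqref{lemm: strong condition for non-super case} the parameter $\lambda$ then satisfies \eqref{strong condition} precisely when $(\rho^\vee,\theta_{s,\g})\leq m$. Since $(\rho^\vee,\theta_{s,\g})={}^Lh^\vee-1$, this is exactly the hypothesis $m\geq {}^Lh^\vee-1$, so the requirement ``$\lambda_\bullet=0$ satisfies \eqref{strong condition}'' of Lemma \ref{lemma: general simplicity theorem for lambda is zero} holds.

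Next I would check the global structure assumptions \eqref{assumption: global VOA structure for Q} and \eqref{assumption: global VOA structure for P}; since the latter implies the former, it suffices to establish \eqref{assumption: global VOA structure for P}. Here $W_{\sqrt{p}Q}=H^0(G\times_BV_0)$ carries the VOA structure induced from $V_0=V_{\sqrt{p}Q}$ by \cite[Corollary 2.21]{Sug2} (applied to the present shift system), it is of generalized CFT-type because it contains $\mathbf{1}$ as a graded subalgebra of the CFT-type module $V_0$, and the natural $G$-action is by automorphisms. Assembling the cosets of $P/Q$, I would set $W_{\sqrt{p}P}=\bigoplus_{\lambda^\bullet\in P_{\mathrm{min}}}H^0(G\times_BV_{-\lambda^\bullet})$, which is the FT construction carried out inside the generalized lattice VOSA $V_{\sqrt{p}P}=\bigoplus_{\mu\in P/Q}V_{\sqrt{p}(Q+\mu)}$; the abelian intertwining algebra structure with grading group $P/Q$ on $V_{\sqrt{p}P}$ is standard (Section \ref{subsection lattice and fermionic} and \cite{DL}), and the naturality of the FT construction transports it to $W_{\sqrt{p}P}$. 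Finally, the self-duality of $W_{\sqrt{p}P}$ (resp. $W_{\sqrt{p}Q}$) used inside the proof of Lemma \ref{lemma: general simplicity theorem for lambda is zero} is supplied by Lemma \ref{Sug2-3-2} combined with the contragredient-dual computation of Lemma \ref{lemm: contragredient dual in our cases}, together with $w_0\uparrow\lambda=-\rho^\vee$ from Lemma \ref{lemm: strong condition for our cases}\eqref{lemm: strong condition for non-super case}.

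With the hypotheses in place, Lemma \ref{lemma: general simplicity theorem for lambda is zero} yields all four statements at once: $W_{\sqrt{p}Q}$ is a simple VOA and $W_{\sqrt{p}P}$ a simple generalized VOA; for $\beta\in P_+$ the multiplicity space $\mathcal{W}_{-\beta}$ is simple as a $\mathcal{W}_0$-module (the case $\beta\in P_+\cap Q$ needing only \eqref{assumption: global VOA structure for Q}); and each $W_{\sqrt{p}(Q-\lambda^\bullet)}=H^0(G\times_BV_{-\lambda^\bullet})$ is simple as a $W_{\sqrt{p}Q}$-module. The main obstacle I anticipate is not the numerical or duality bookkeeping but the verification of \eqref{assumption: global VOA structure for P}: namely checking that $W_{\sqrt{p}P}$ genuinely inherits an abelian intertwining algebra structure with group $P/Q$ and is of generalized CFT-type, since this is where the non-integrality of the form on $\sqrt{p}P$ and the compatibility of sheaf cohomology with the coset decomposition must be handled with care.
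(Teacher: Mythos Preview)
Your proposal is correct and follows exactly the route the paper takes: the paper states that Corollary~\ref{vacuum simplicity for non-super case} is obtained ``by applying Corollary~\ref{Sug2-3-15} and Lemma~\ref{lemma: general simplicity theorem for lambda is zero}'', and your write-up spells out precisely the verification of the hypotheses of Lemma~\ref{lemma: general simplicity theorem for lambda is zero} (the translation $m\geq{}^Lh^\vee-1\Leftrightarrow$ \eqref{strong condition} via Lemma~\ref{lemm: strong condition for our cases}\eqref{lemm: strong condition for non-super case}, the structural assumptions \eqref{assumption: global VOA structure for Q}/\eqref{assumption: global VOA structure for P}, and the self-duality input from Lemma~\ref{lemm: contragredient dual in our cases}). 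Your concern about the abelian intertwining algebra structure on $W_{\sqrt{p}P}$ is legitimate but is treated in the paper as standard, being inherited from $V_{\sqrt{p}P}$ via the naturality of the FT construction (cf.\ \cite[Section~2.1, Corollary~2.21]{Sug2} and \cite{DL}).
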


\begin{remark}\label{rem:AM-C2}
When $\mathfrak{g}=B_2$ and $p$ is even, Corollary~\ref{character formula for non-super case} yields the
conjectural $q$-characters in \cite[Conjecture~8.1]{AM3}, and Corollary~\ref{vacuum simplicity for non-super case}
establishes the corresponding simplicity conjectured in \cite[Conjecture
~2.1]{AM3}.
\end{remark}

\subsection{The super case \eqref{super case}}
\label{section: super cases}
In the same manner as above, we obtain the following:
\begin{corollary}
\label{character formula for super case}
For $\alpha\in P\cap Q$ and $\lambda\in \Lambda$ such that $(p\lambda_\bullet+\rho,{}^L\theta)\leq p$, we have
\begin{align}
&\ch_q\mathcal{W_{-\alpha+\lambda}}=\ch_q F \sum_{\sigma\in W}(-1)^{l(\sigma)} \ch_q V_{\sigma\ast \lambda}^{h=\alpha+\lambda^\bullet-\sigma\uparrow \lambda}=\ch_q F\sum_{\sigma\in W}(-1)^{l(\sigma)}\tfrac{q^{\Delta_{\sqrt{p}(-(\alpha+\lambda^\bullet)+\sigma\ast \lambda_\bullet)}}}{\eta(q)^r},\label{character1}\\
&\mathrm{sch}_q\mathcal{W}_{-\alpha+\lambda}=\mathrm{sch}_q F \sum_{\sigma\in W}(-1)^{l(\sigma)} \mathrm{sch}_q V_{\sigma\ast \lambda}^{h=\alpha+\lambda^\bullet-\sigma\uparrow \lambda}=\mathrm{sch}_q F\sum_{\sigma\in W}(-1)^{l(\sigma)+f(\sigma\circ(\alpha+\lambda^\bullet ))} \tfrac{q^{\Delta_{\sqrt{p}(-(\alpha+\lambda^\bullet)+\sigma\ast \lambda_\bullet)}}}{\eta(q)^r},\label{character2}\\
&\ch_q\mathcal{S}_R\mathcal{W}_{-\alpha+\lambda}=q^{C+\tfrac{1}{16}}\ch_q\iota^*F \sum_{\sigma\in W}(-1)^{l(\sigma)}\tfrac{q^{\Delta_{\sqrt{p}(-(\alpha+\lambda^\bullet)+\sigma\ast \lambda_\bullet)}+(A\alpha_r+B\alpha_{r-1},-\sqrt{p}(\sigma\circ (\alpha+\lambda^\bullet)-\lambda_\bullet)}}{\eta(q)^r}, \label{character3}
\end{align}
where for any $\beta\in P$, define a function $f:P\mapsto \mathbb{Z}$ by $f(\beta)=\lfloor(\beta,\alpha_r) \rfloor$, $A, B,C$ are defined in \eqref{ABCdefn}, and $\operatorname{sch}_qF$ and $\ch_q F$ are in 
Section \ref{Section: free field algebra}.
We denote 
$H^0(G\times_B\mathcal{S}_RV_\lambda)=:\bigoplus_{\alpha\in P_+\cap Q}L_{\alpha+\lambda^\bullet}\otimes\mathcal{S}_R\mathcal{W}_{-\alpha+\lambda}$
by abuse of notation.
\end{corollary}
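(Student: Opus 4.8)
The plan is to derive all three formulas in Corollary~\ref{character formula for super case} as direct consequences of the Weyl-type character formula \eqref{Weyl type character formula for multiplicity W} established in Corollary~\ref{Sug2-3-15}, together with the explicit description of the conformal grading in the super case. First I would invoke Lemma~\ref{lemm: strong condition for our cases}\eqref{lemm: strong condition for super case}, which tells us that the hypothesis $(p\lambda_\bullet+\rho,{}^L\theta)\leq p$ is precisely the strong condition \eqref{strong condition} in this setting; this legitimizes the use of Corollary~\ref{Sug2-3-15}. Since in the super case $V_\lambda=V_{\sqrt p(Q+\lambda)}\otimes F$ by \eqref{setup in the super case}, the character factorizes: $\ch_q V_{\sigma\ast\lambda}^{h=\beta}=\ch_qF\cdot\ch_q V_{\sqrt p(Q+\sigma\ast\lambda)}^{h=\beta}$, because the free fermion $F$ carries trivial Cartan weight and is an independent tensor factor. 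Feeding this into \eqref{Weyl type character formula for multiplicity W} with $\beta=\alpha+\lambda^\bullet-\sigma\uparrow\lambda$ immediately produces the first equality of \eqref{character1}. For the second equality of \eqref{character1}, the point is that the minimal conformal weight of the weight space $V_{\sqrt p(Q+\sigma\ast\lambda)}^{h=\alpha+\lambda^\bullet-\sigma\uparrow\lambda}$ is attained by a single lattice vector, so the Heisenberg contribution gives the factor $\eta(q)^{-r}$ and the lattice vector $e^{\sqrt p\mu}$ contributes $q^{\Delta_{\sqrt p\mu}}$; I would verify that the relevant lattice point is $\mu=-(\alpha+\lambda^\bullet)+\sigma\ast\lambda_\bullet$, using the identity $\sigma\uparrow\lambda=\sigma\ast\lambda_\bullet-(\sigma\ast\lambda)_\bullet$ from Example~\ref{example ours} together with the translation relating the Cartan weight $\lceil\cdot\rceil$ and the lattice coordinate via \eqref{the Cartan action for our cases}.

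Next I would treat the supercharacter formula \eqref{character2}. The only new ingredient relative to \eqref{character1} is the parity sign. By the parity rule for lattice VOSAs stated in Section~\ref{subsection lattice and fermionic} (the parity of $v\otimes e^{\sqrt p\mu}$ is $p|\mu|^2\bmod 2$) together with the odd parity of the fermion $\phi$, the supertrace over the weight space $V_{\sigma\ast\lambda}^{h=\sigma\circ\beta}$ differs from the trace by the sign $(-1)^{f(\sigma\circ(\alpha+\lambda^\bullet))}$, where $f(\beta)=\lfloor(\beta,\alpha_r)\rfloor$ records the number of half-integer $\alpha_r$-fermionic excitations; the combinatorial content is that this floor counts the parity of the short-root fermionic contribution in the super screening operator $Q_{r,\lambda}$ from \eqref{short screening operator in super case}. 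Substituting $\ch F\rightsquigarrow\sch F$ and inserting this sign into each Weyl-group term gives \eqref{character2}. I would check the consistency of the sign $(-1)^{l(\sigma)+f(\sigma\circ(\alpha+\lambda^\bullet))}$ against the short exact sequence in Definition~\ref{Def FT data}\eqref{Felder complex of FT data} precisely as in the derivation of Corollary~\ref{Sug2-3-15}, tracking how the parity transforms under each simple reflection $\sigma_i$.

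For the Ramond-sector character \eqref{character3}, the plan is to apply the spectral flow twist $\mathcal{S}_R$ with $R=\tfrac{\pi i}{\sqrt{2m-1}}\varpi_r$ as in Section~\ref{sectionramond}, and to use the deformed grading computed in \eqref{ABCdefn}. Since Theorem~\ref{main theorem in Section 2}\eqref{main theorem in Section 2-2} guarantees that $\{\mathcal{S}_RV_\lambda\}_{\lambda\in\Lambda}$ is again a shift system, Corollary~\ref{Sug2-3-15} applies verbatim to $\mathcal{S}_R V_\lambda$, and I would simply re-run the previous computation with the twisted conformal weight \eqref{twistedweight} in place of \eqref{conformal weight of lattice point vector}. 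The twist replaces $F$ by its $\iota$-twisted module $\iota^\ast F$ (hence $\ch F\rightsquigarrow\ch\iota^\ast F$ and the overall prefactor $q^{C+\frac1{16}}$ from the constant term of $L_0$), and shifts each lattice exponent by the linear term $(A\alpha_r+B\alpha_{r-1},-\sqrt p(\sigma\circ(\alpha+\lambda^\bullet)-\lambda_\bullet))$ coming from the $A(\alpha_r)_{(0)}+B(\alpha_{r-1})_{(0)}$ operators. Assembling these pieces gives \eqref{character3}.

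The main obstacle I expect is bookkeeping rather than conceptual: correctly identifying the lattice representative $-(\alpha+\lambda^\bullet)+\sigma\ast\lambda_\bullet$ in each Weyl-group summand and, above all, pinning down the parity function $f$ so that the supercharacter sign is exactly $(-1)^{l(\sigma)+f(\sigma\circ(\alpha+\lambda^\bullet))}$. The delicate point is that the fermion $\phi$ enters only through the short simple root $\alpha_r$ (via $f_r=(e^{\sqrt p\alpha_r}\otimes\phi)_{(0)}$), so the parity depends on the $\alpha_r$-component of the weight; verifying that $\lfloor(\sigma\circ(\alpha+\lambda^\bullet),\alpha_r)\rfloor$ is the right count, and that it transforms compatibly under the recursion in the proof of Corollary~\ref{Sug2-3-15}, is where the argument must be carried out with care. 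The Ramond constants $A,B,C$ are fixed by \eqref{ABCdefn} and require no further computation here, so once the non-Ramond parity is settled, \eqref{character3} follows mechanically from the twisted grading \eqref{twistedweight}.
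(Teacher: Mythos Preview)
Your proposal is correct and follows the same approach as the paper, which gives no explicit proof beyond the sentence ``In the same manner as above, we obtain the following'' (referring to Corollary~\ref{character formula for non-super case}, itself obtained by applying Corollary~\ref{Sug2-3-15}). Your write-up simply fills in the details the paper omits: invoking Lemma~\ref{lemm: strong condition for our cases}\eqref{lemm: strong condition for super case} to justify the strong condition, factorizing $V_\lambda=V_{\sqrt p(Q+\lambda)}\otimes F$, identifying the lattice representative via $\sigma\uparrow\lambda=\sigma\ast\lambda_\bullet-(\sigma\ast\lambda)_\bullet$, tracking parity for the supercharacter, and rerunning the computation with the twisted grading \eqref{twistedweight} for the Ramond sector.
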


\begin{corollary}\label{vacuum simplicity for super case}
When $p\geq h^\vee-1$ and $\lambda=0$, $W_{\sqrt{p}Q}$ is simple as a VOA and a generalized VOA, respectively. 
Furthermore, for $\alpha\in P_+\cap Q$, $\mathcal{W}_{-\alpha}$ is simple as $\mathcal{W}_0$-module.
\end{corollary}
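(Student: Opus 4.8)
The goal is to establish Corollary \ref{vacuum simplicity for super case}, namely that for $p\geq h^\vee-1$ and $\lambda=0$, the algebra $W_{\sqrt{p}Q}$ is simple (as a VOSA), and that each $\mathcal{W}_{-\alpha}$ (for $\alpha\in P_+\cap Q$) is simple as a $\mathcal{W}_0$-module. The plan is to reduce this to the abstract simplicity framework of Lemma \ref{lemma: general simplicity theorem for lambda is zero}, which produces exactly these conclusions from (i) the strong condition \eqref{strong condition} being satisfied at $\lambda_\bullet=0$, and (ii) the global VOA-structure assumption \eqref{assumption: global VOA structure for Q}. So the first step is to verify that $\lambda_\bullet=0$ does satisfy the strong condition in the super case. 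By Lemma \ref{lemm: strong condition for our cases}\eqref{lemm: strong condition for super case}, this is equivalent to $(p\cdot 0+\rho,{}^L\theta)\leq p$, i.e. $(\rho,{}^L\theta)\leq p$; since $(\rho,{}^L\theta)={}^Lh^\vee-1$ and in type $B_r$ one has ${}^Lh^\vee=h^\vee$ for the relevant normalization, the hypothesis $p\geq h^\vee-1$ is precisely what is needed. This is a short computation using the root-system data fixed in Section \ref{section: preliminary}.

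Next I would invoke the shift-system structure from Theorem \ref{main theorem in Section 2}\eqref{main theorem in Section 2-2}, which guarantees $(\Lambda,\uparrow,\{V_\lambda\}_{\lambda\in\Lambda})$ is a genuine shift system in the super case \eqref{super case}. Combined with the VOSA material of Section \ref{Section: free field algebra} and the discussion around \eqref{classicaldecomposition}, this gives $\mathcal{W}_0\simeq H^0(G\times_BV_0)^G$ as a vertex operator super subalgebra, with each $\mathcal{W}_{-\alpha}$ a $\mathcal{W}_0$-module. The crucial input is the self-duality coming from Lemma \ref{Sug2-3-2} together with Lemma \ref{lemm: contragredient dual in our cases}: at $\lambda=0$ we have $\lambda'=-w_0(0)=0$ and $w_0\uparrow\lambda=-\rho$, so the hypotheses of Lemma \ref{Sug2-3-2} hold and $W_{\sqrt{p}Q}=H^0(G\times_BV_0)$ is self-dual. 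With self-duality in hand, the non-degenerate invariant bilinear form argument of Lemma \ref{lemma: general simplicity theorem for lambda is zero} (the computation $1=\langle a^\ast,a\rangle=\langle\mathbf{1},(a^\ast_{(-1)})^\dagger a\rangle$, using the CFT-type property to force $(a^\ast_{(-1)})^\dagger a=\mathbf{1}$) yields simplicity of $W_{\sqrt{p}Q}$ as a VOSA, and then quantum Galois theory (\cite{DM,McR2}, via the $G$-action in $\operatorname{Aut}(W_{\sqrt{p}Q})$) upgrades this to simplicity of each $\mathcal{W}_{-\alpha}$ as a $\mathcal{W}_0$-module.

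The one genuine subtlety specific to the super case — and the step I expect to be the main obstacle — is that Lemma \ref{lemma: general simplicity theorem for lambda is zero} has two branches: the stronger conclusions (simplicity of $W_P$, of $W_{Q+\lambda^\bullet}$ as a $W_Q$-module, and of $\mathcal{W}_{-\beta}$ for all $\beta\in P_+$) require the full abelian-intertwining-algebra hypothesis \eqref{assumption: global VOA structure for P}, whereas in the super case only the weaker \eqref{assumption: global VOA structure for Q} is available, as flagged in the paragraph ending Section \ref{section: simplicity revisit} (``because of some technical difficulty''). Consequently the correct statement restricts $\beta=\alpha$ to lie in $P_+\cap Q$ rather than all of $P_+$, and does not assert simplicity of $W_P$. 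So in writing the proof I would be careful to apply only the $W_Q$-branch of Lemma \ref{lemma: general simplicity theorem for lambda is zero}: establish \eqref{assumption: global VOA structure for Q} from the VOSA structure of $V_0=V_{\sqrt{p}Q}\otimes F$ and the induced structure on $H^0(G\times_BV_0)$, then deduce simplicity of $W_{\sqrt{p}Q}$ and of $\mathcal{W}_{-\alpha}$ for $\alpha\in P_+\cap Q$ only. The remaining verifications (CFT-type of the modules, the $G$-action lying in $\operatorname{Aut}$, finiteness of graded pieces) are routine given the free-field realization and are handled exactly as in \cite[Section 3.2]{Sug2}, so I would cite rather than reproduce them.
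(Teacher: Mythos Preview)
Your approach is correct and matches the paper's, which simply says ``In the same manner as above'' (i.e., apply Lemma \ref{lemma: general simplicity theorem for lambda is zero} once the strong condition at $\lambda_\bullet=0$ is verified via Lemma \ref{lemm: strong condition for our cases}\eqref{lemm: strong condition for super case}); your identification of the $W_Q$-branch (assumption \eqref{assumption: global VOA structure for Q} only) as the one available here is also exactly right. One minor numerical slip: for $\g=B_r$ one has ${}^L\theta=2\alpha_1^\vee+\cdots+2\alpha_{r-1}^\vee+\alpha_r^\vee$, so $(\rho,{}^L\theta)=2r-1=h^\vee_{B_r}$ (not ${}^Lh^\vee-1$, and ${}^Lh^\vee=h^\vee_{C_r}=r+1\neq h^\vee_{B_r}$); the stated hypothesis $p\geq h^\vee-1$ is nonetheless equivalent to the required $p\geq 2r-1$ only because $p=2m-1$ is odd while $h^\vee-1=2r-2$ is even, so the argument still goes through.
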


\begin{remark}
For simply-laced $\g$ (resp. rank $1$ case), modularity of the characters were already 
studied in \cite{BrM} (resp. \cite{AM1,AM2}). 
We expect similar results to hold in our cases too. \end{remark}

\section{Main results II}
\label{section: simplicity theorem under the strong condition}
In Section \ref{section: simplicity theorem under the strong condition}, first we show that $\mathcal{W}_0$ is isomorphic to the corresponding principal W-algebra $\mathbf{W}^k(\g)$.
It enables us to use another type of character formula, i.e., Kazhdan--Lusztig type character formula (see \cite[Theorem 1.1]{KT}, \cite[Theorem 7.7.1]{Ar}) and the discussion in Section \ref{section: simplicity revisit} (see the last two paragraphs).
Using these results, we will attempt to extend the simplicity theorems (Corollary \ref{vacuum simplicity for non-super case} and Corollary \ref{vacuum simplicity for super case}) to $\lambda\in\Lambda$ satisfying condition \eqref{strong condition} (or, equivalently, the conditions in Lemma \ref{lemm: strong condition for our cases}). 
In the non-super case \eqref{non-super case}, the simplicity theorem will be proved in the same manner as \cite{Sug2}.
On the other hand, the super case \eqref{super case} has not been studied as much as the case \eqref{non-super case}, and we will prove the simplicity theorem under a particular assumption (exactness of $+$-reduction).
Since the flow of the discussion is the same as that of \cite[Section 3.3]{Sug2}, details are sometimes omitted.

\subsection{Preliminary from W-algebra}
\label{subsection: preliminary from W-algebra}
Let us recall the notation in \cite[Chapter 6]{Kac2} (see also \cite[Chapter 1]{Wak}) and \cite{Ar,ArF,ACL}.
For an affine Lie algebra $\hat{\g}$, the Cartan subalgebra $\hat{\h}$ and its dual $\hat{\h}^\ast$ are decomposed as
$\hat{\h}=\h\oplus(\C K+\C d)$ and $\hat{\h}^\ast=\h^\ast\oplus(\C\delta+\C\Lambda_0)$, respectively.
For $\hat\mu\in\hat{\h}^\ast$, we have
\begin{align*}
\hat\mu=\mu+\langle \hat\mu,K\rangle\Lambda_0+(\mu,\Lambda_0)\delta,
\ \ 
(\mu\in\h^\ast).
\end{align*}
By \cite[Proposition 6.5]{Kac2}, the affine Weyl group $\hat{W}$ is $W\ltimes Q^\vee$ (for $\hat\g=X^{(1)}_r$) and $W\ltimes Q_{B_r}$ (for $\hat\g=A^{(2)}_{2r}$), respectively.
The $\hat{W}$-action on $\hat{\h}^\ast$ is given by
\begin{align*}
\sigma t_\beta(\hat\mu)
=
\sigma(\mu+\langle\hat{\mu},K\rangle\beta)
+
\langle\hat{\mu},K\rangle\Lambda_0
+
((\hat\mu,\Lambda_0-\beta)-\tfrac{1}{2}|\beta|^2\langle\hat\mu,K\rangle)\delta.
\end{align*}
For $\hat\rho:=\rho+h^\vee\Lambda_0$, the circle $\hat{W}$-action on $\hat{\h}^\ast$ is given by $w\circ\hat\mu:=w(\hat\mu+\hat\rho)-\hat\rho$.

For $k\in\C$, denote $\hat{\h}^\ast_k$ the set of $\hat\mu\in\hat{\h}^\ast$ such that $(\hat{\mu},K)=k$.
Let us define 
$V^k(\g)=U(\hat\g)\otimes_{\g[t]\oplus\C K}\C_k$
the universal affine vertex algebra at level $k$, where $\g[t]$ (resp. $K$) act on $\C_k$ trivially (resp. $k\operatorname{id}$).
More generally, we can define the Weyl module $\hat{V}_{\beta,k}=U(\hat\g)\otimes_{\g[t]\oplus\C K}L_{\beta}$ over $V^k(\g)$ in the same manner.
Clearly, we have $\hat{V}_{0,k}=V^k(\g)$.
For $\hat\mu\in\hat{\h}^\ast_k$, define $\hat{M}(\hat\mu)=U(\hat{\g})\otimes_{U(\hat{\mathfrak{b}})}\C_{\hat\mu}$ the Verma module and its simple quotient $\hat{L}(\hat\mu)$, respectively, where $U(\hat{\mathfrak{n}})$ (resp. $\h$, $K$) act on $\C_{\hat\mu}$ trivially (resp. by the character of $\mu$, $k\operatorname{id}$).

By applying the Drinfeld--Sokolov reduction $H^0_{\mathrm{DS}}(-)$ to $V^k(\g)$, the (universal) principal W-algebra $\mathbf{W}^k(\g)$ at level $k$ and its unique simple quotient $\mathbf{W}_k(\g)$ are obtained.
Note that we have the Feigin--Frenkel duality
\begin{align}
&\mathbf{W}^k(\g)\simeq \mathbf{W}^{{}^Lk}({}^L\g),\ \ 
r^\vee(k+h^\vee)({}^Lk+{}^Lh^\vee)=1,\ \ \g=X_r^{(1)},\label{FF duality for non-super case}\\
&\mathbf{W}^k(\osp(1|2r))\simeq \mathbf{W}^{{}^Lk}(\osp(1|2r)),\ \ 
4(k+h^\vee)({}^Lk+h^\vee)=1\label{FF duality for super case}
\end{align}
in our cases (see \cite{FF,FF1}, \cite[Remark 6.5]{Gen1}).
More generally, for $\check{\mu}\in\check{P}$ in the coweight lattice $\check{P}=Q^\ast$, we have the twisted Drinfeld--Sokolov reduction $H^0_{\mathrm{DS},\check{\mu}}(-)$ and the \textit{Arakawa--Frenkel module} 
\begin{align*}
\mathbf{T}^{k+h^\vee}_{\beta,\check{\mu}}
:=H^0_{\mathrm{DS},\check{\mu}}(\hat{V}_{\beta,k})
\end{align*}
over $\mathbf{W}^{k}(\g)$.
For the Langlands dual ${}^L\g$, we can define
$\check{\mathbf{T}}^{{}^Lk+{}^Lh^\vee}_{\check{\mu},\beta}=H^0_{\mathrm{DS},\beta}(\hat{V}_{\beta,{}^Lk})$
in the same manner. 
Then for $k\not\in Q$ or the case in \cite{Sug2}, the Feigin--Frenkel duality $\mathbf{T}^{k+h^\vee}_{\beta,\check{\mu}}\simeq \check{\mathbf{T}}^{{}^Lk+{}^Lh^\vee}_{\check{\mu},\beta}$ under the identification $\mathbf{W}^k(\g)\simeq \mathbf{W}^{{}^Lk}({}^L\g)$ is known to hold (see \cite{ArF,Sug2}).
Let $\chi_{\mu}:Z(\mathfrak{g})\rightarrow \mathbb{C}$ with $\mu\in\mathfrak{h}^\ast$ be the map in \cite[(27)]{ACL}, where $Z(\mathfrak{g})$ is the center of the universal enveloping algebra of $U(\mathfrak{g})$. 
Then one can define the Verma module $\mathbf{M}_k(\chi_\mu):=U(\mathbf{W}^k(\g))\otimes_{U(\mathbf{W}^k(\g)_{\geq 0})}\C_{\chi_\mu}$ over $\mathbf{W}^k(\g)$ with highest weight $\chi_\mu$ and its simple quotient $\mathbf{L}_k(\chi_\mu)$ in the same manner above (in the Langlands dual case, denote $\check{\mathbf{M}}_k(\chi_\mu)$ and $\check{\mathbf{L}}_k(\chi_\mu)$, respectively).

\subsection{The non-super case \eqref{non-super case}}
\label{subsection: simplicity theorem for non-super case}
Let us consider our case \eqref{non-super case}.
By the injectivity of the Miura map \cite[Lemma 5.4]{Gen1} and the consideration in \cite[Remark 4.1]{FKRW}, we have
\begin{align*}
\mathbf{W}^k(\g)\hookrightarrow\bigcap_{i\in I}\ker e^{-\frac{1}{{k+h^\vee}}b_i}_{(0)}|_{M(0)},
\end{align*}
where $b_i$ satisfies $(b_i,b_j)=(k+h^\vee)(\alpha_i,\alpha_j)$.
In particular, Fock module $M(\mu)$ is also a $\mathbf{W}^k(\g)$-module.
By comparing the conformal vector \eqref{conformal vector for the case of non-super} and \cite[(3.22)]{ArF}, for $k=\frac{1}{p}-h^\vee$ and $b_i=-\frac{1}{\sqrt{p}}\alpha_i$, we have
\begin{align}
\label{embedding of W-algebra into W_0}
\mathbf{W}^{m-{}^Lh^\vee}({}^L\g)
\overset{\text{\eqref{FF duality for non-super case}}}{\simeq}
\mathbf{W}^{\frac{1}{p}-h^\vee}(\g)
\hookrightarrow
\bigcap_{i\in I}\ker e^{\sqrt{p}\alpha_i}_{(0)}|_{M(0)}=\mathcal{W}_0.
\end{align}
On the other hands, by \cite[Section 4.3]{ArF}, we have
\begin{align}
\label{character of Arakawa-Frenkel module}
\ch_q T^{m}_{p\lambda_\bullet,\alpha+\lambda^\bullet}
=
\ch_q
T^{\frac{1}{p}}_{\alpha+\lambda^\bullet,p\lambda_\bullet}
=
\ch_q\mathcal{W}_{-\alpha+\lambda}
\end{align}
for $\alpha\in P_+\cap Q$ and $\lambda\in\Lambda$ satisfying the strong condition Lemma \ref{lemm: strong condition for our cases}\eqref{lemm: strong condition for non-super case}.
In particular, by applying $\alpha=\lambda=0$, we have $\ch_q\mathbf{W}^{m-{}^Lh^\vee}({}^L\g)=\ch_q\mathbf{W}^{\frac{1}{p}-h^\vee}(\g)=\ch_q\mathcal{W}_0$,
and thus the injection in \eqref{embedding of W-algebra into W_0} is isomorphic.
By Corollary \ref{vacuum simplicity for non-super case}, $\mathcal{W}_{-\beta}$ is simple as $\mathbf{W}^{m-{}^Lh^\vee}({}^L\g)$-module for any $\beta\in P_+$.
In particular, for $\beta=0$,
we have
\begin{align}
\label{W_0 and W-algebra coincides in non-super case}
\mathcal{W}_0
\simeq 
\mathbf{W}^{m-{}^Lh^\vee}({}^L\g)
\simeq
\mathbf{W}_{m-{}^Lh^\vee}({}^L\g)
\simeq
\mathbf{W}^{\frac{1}{p}-h^\vee}(\g)
\simeq
\mathbf{W}_{\frac{1}{p}-h^\vee}(\g).
\end{align}
In the remainder of this subsection, we use
\begin{align}
\label{k and check-k in non-super case}
k=\tfrac{1}{p}-h^\vee,\ \ \check{k}=m-{}^Lh^\vee.
\end{align}
On the other hand, in the same manner as \cite[Lemma 3.14, 3.19]{Sug2}, we have the following.

\begin{lemma}\label{lemma: top component}
Let $\alpha\in P_+\cap Q$, $\lambda\in\Lambda$, and let $X$ be any one of the four modules
\[
M(\sqrt p(-\alpha+\lambda)),\qquad W_{-\alpha+\lambda},\qquad
T^{\frac1p}_{\alpha+\lambda_\bullet,\,p\lambda_\bullet},\qquad
\check T^{\,m}_{p\lambda_\bullet,\,\alpha+\lambda_\bullet}.
\]
Then the top component of $X$ is one-dimensional, and it is isomorphic to
$\mathbb C_{\chi_{\alpha-\lambda}}$ as a module over
$\mathrm{Zhu}(\mathcal W^{k}(\mathfrak g))$, and to
$\mathbb C_{\chi_{p(-\alpha+\lambda)}}$ as a module over
$\mathrm{Zhu}(\mathcal W^{\check k}({}^L\mathfrak g))$, with $k,\check k$ as in (\ref{k and check-k in non-super case}).
In particular, if $\operatorname{ch}_q X=\operatorname{ch}_q L_k(\chi_{\alpha-\lambda})$
$\bigl(=\operatorname{ch}_q \check L_{\check k}(\chi_{p(-\alpha+\lambda)})\bigr)$, then
$X\cong L_k(\chi_{\alpha-\lambda})$ as $\mathcal W^{k}(\mathfrak g)$-modules, and
$X\cong \check L_{\check k}(\chi_{p(-\alpha+\lambda)})$ as
$\mathcal W^{\check k}({}^L\mathfrak g)$-modules.
\end{lemma}

We aim to apply the discussion in Section \ref{section: simplicity revisit} (see the last two paragraphs).
First, let us give a KL-type decomposition in Lemma \ref{new simplicity theorem}\eqref{formal KL decomposition}.
Unless otherwise noted, all symbols (e.g., $Q$, $\alpha_i$,...) are the ones of $\g$.
Denote $h_\mu$ and $\check{h}_\mu$ the top conformal weight of $\mathbf{M}_{k}(\chi_\mu)$ and $\check{\mathbf{M}}_{\check{k}}(\chi_\mu)$, respectively (see \cite[(30)]{ACL}).
By \eqref{conformal weight of lattice point vector}, for any $\beta\in(\sqrt{p}Q)^\ast$ (in other words, $\sqrt{p}\beta\in P_{{}^L\g}$), we have
\begin{align}
\label{compare the Delta and h}
\Delta_{\beta}
=h_{-\frac{1}{\sqrt{p}}\beta}
=\check{h}_{\sqrt{p}\beta}.
\end{align}
Let us take $\beta=\sqrt{p}(-\alpha+\lambda)$ ($\alpha\in P_+\cap Q$, $\lambda\in\Lambda$).
Then the corresponding highest weight of $V^{\check{k}}({}^L\g)$ is
\begin{align*}
-p(\alpha+\lambda^\bullet+\rho)+p\lambda_\bullet+\check{k}\Lambda_{0,{}^L\g}.
\end{align*}
Let us recall the notation in \cite{KT}.
In our case (i.e., $\widehat{{}^L\g}$), we have
\begin{align*}
&\hat{W}_{{}^L\g}
=W_{{}^L\g}\ltimes(Q_{{}^L\g})^\vee
=W\ltimes r^\vee Q,\\
&{\Delta}^{\mathrm{re}}_{+,\widehat{{}^L\g}}
=
\{\alpha_{\pm}+n\delta\ |\ 
\alpha_{\pm}\in\Delta_{+,{}^L\g},\ n\in\Z_{>0}\}
\sqcup
\Delta_{+,{}^L\g}
=
\{\alpha_{\pm}^\vee+n\delta\ |\ 
\alpha_{\pm}\in\Delta_+,\ n\in\Z_{>0}\}
\sqcup
\Delta_{+}^\vee
\\
& 
({\Delta}^{\mathrm{re}}_{+,\widehat{{}^L\g}})^\vee
=\{r^\vee\alpha_{\pm}^l+r^\vee n\delta\ |\ \alpha_{\pm}^l\in\Delta_{\pm}^l,\ n\in\Z_{>0}\}
\sqcup
\{r^\vee\alpha_{\pm}^s+n\delta\ |\ \alpha_{\pm}^s\in\Delta_{\pm}^s,\ n\in\Z_{>0}\}
\sqcup
r^\vee\Delta_{+},
\\
&
\begin{aligned}
{\mathcal{C}}^{+}_{\widehat{{}^L\g}}&=
\{\nu\ |\ (\gamma^\vee,\nu+\hat{\rho}_{{}^L\g})_{{}^LQ}\geq 0\ \text{for any $\gamma\in\Delta^{\mathrm{re}}_{+,\widehat{{}^L\g}}$}
\}
\\
&=
\{\nu\ |\ (\gamma^\vee,\nu+\rho^\vee+{}^Lh^\vee\Lambda_{0,{}^L\g})_{{}^LQ}\geq 0\ \text{for any $\gamma^\vee\in(\Delta^{\mathrm{re}}_{+,\widehat{{}^L\g}})^\vee$}
\}.
\end{aligned}
\end{align*}
By \cite[Lemma 2.10]{KT}, there exists unique $\sigma t_{r^\vee\beta}\in\hat{W}_{{}^L\g}=W\ltimes r^\vee Q$ such that
\begin{align*}
\sigma t_{r^\vee\beta}
\circ_{\widehat{{}^L\g}}
(-p(\alpha+\lambda^\bullet+\rho)+p\lambda_\bullet+\check{k}\Lambda_{0,{}^L\g})\in{\mathcal{C}}^{+}_{\widehat{{}^L\g}}.
\end{align*}
In other words, we have
\begin{align*}
&0\leq 
(\sigma^{-1}(\Delta_+^l),p(\beta-(\alpha+\lambda^\bullet+\rho))+p\lambda_\bullet+\rho^\vee)_Q
\leq r^\vee m,\\
\land\ 
&0\leq 
(\sigma^{-1}(\Delta_+^s),p(\beta-(\alpha+\lambda^\bullet+\rho))+p\lambda_\bullet+\rho^\vee)_Q
\leq m.
\end{align*}
We show that if $\lambda$ satisfies the condition Lemma \ref{lemm: strong condition for our cases}\eqref{lemm: strong condition for non-super case}, i.e. $(p\lambda_\bullet+\rho^\vee,\theta_s)\leq m$, such $\sigma t_{r^\vee\beta}$ is independent of the choice of $\lambda_\bullet$.
Let us consider the case $\g=B_r$ (other cases are similar).
Set
$\beta-(\alpha+\lambda^\bullet+\rho)=\sum_{i\in I}n_i\varpi_i$ ($n_i\in\Z$).
For $\alpha_{+}\in\Delta_+$, if there exists, denote $a_{\pm\alpha_{+}}\in\Delta_{+}$ such that $\sigma^{-1}(a_{\pm\alpha_{+}})=\pm\alpha_{+}$.
For any $\alpha_+\in\Delta_+$, one of $a_{\alpha_+}$ or $a_{-\alpha_+}$ always exists.
Set
$I_{\pm}=\{i\ |\ \exists a_{\pm\alpha_i}\}$.
Clearly, we have $I=I_+\sqcup I_{-}$.
For $i\in I_+$, we have
\begin{align*}
0\leq (\alpha_i,p\sum_{i\in I}n_i\varpi_i+p\lambda_\bullet+\rho^\vee)=pn_i(\alpha_i,\varpi_i)+\lambda_i\leq 
\begin{cases}
r^\vee m&(\alpha_i\in\Delta_+^l),\\
m&(\alpha_i\in\Delta_+^s),
\end{cases}
\end{align*}
and thus $n_i=0$ for $i\in I_+$.
Similarly, we have $n_i=-1$ for $i\in I_{-}$, and thus $\sum_{i\in I}n_i\varpi_i=-\sum_{i\in I_-}\varpi_i$.
Next, let us consider $a_{\pm\theta_s}$.
If $a_{\theta_s}$ exists, then we have $I_{-}=\phi$.
On the other hand, if $a_{-\theta_s}$ exists, then we have $I_{-}=\{r\}$.
Each of these cases corresponds to the case $\alpha+\lambda^\bullet+\rho\in Q$ or not, which is independent of the choice of $\lambda_\bullet$.
Therefore, we can use the notation $y_{\alpha,\lambda^\bullet}\in \hat{W}$ for the inverse of the unique element $\sigma t_\beta$ in the above discussion.
By \cite[Theorem 1.1]{KT}, we obtain the following:
\begin{lemma}\label{uniqueness of KL}
For $\alpha\in P_+\cap Q$ and $\lambda\in\Lambda$ in \eqref{the representatives in non-super case} s.t. $(p\lambda_\bullet+\rho^\vee,{}^L\theta)\leq p$, we have
\begin{align*}
\ch_q\hat{L}_{\check{k}}(-p(\alpha+\lambda^\bullet+\rho)+p\lambda_\bullet+\check{k}\Lambda_{0,{}^L\g})_{{}^L\g}
=
\sum_{y\geq y_{\alpha,\lambda^\bullet}}
a_{y,y_{\alpha,\lambda^\bullet}}
\ch_q\hat{M}_{\check{k}}(y\circ_{\widehat{{}^L\g}}\mu_\lambda)_{{}^L\g},
\end{align*}
where
$a_{y,w}=Q_{y,w}(1)$ for the inverse Kazhdan-Lusztig polynomial $Q_{y,w}(q)$ and 
\begin{align}\label{the element in C+ in on-super case}
\mu_{\lambda}
=
y_{\alpha,\lambda^\bullet}^{-1}
\circ_{\widehat{{}^L\g}}
(-p(\alpha+\lambda^\bullet+\rho)+p\lambda_\bullet+\check{k}\Lambda_{0,{}^L\g}).
\end{align}
Furthermore, by applying the (exact) $+$-reduction functor (see \cite[Theorem 7.7.1]{Ar}), we have
\begin{align*}
\ch_q\check{\mathbf{L}}_{\check{k}}(\chi_{p(-\alpha+\lambda)})
=
\sum_{y\geq y_{\alpha,\lambda^\bullet}}
a_{y,y_{\alpha,\lambda^\bullet}}
\ch_q\check{\mathbf{M}}_{\check{k}}(\chi_{y\circ_{\widehat{{}^L\g}}\mu_\lambda+p\rho-\check{k}\Lambda_{0,{}^L\g}}).
\end{align*}
In particular, under the setting \eqref{setup for lem 2.6} below, the condition Lemma \ref{new simplicity theorem}\eqref{formal KL decomposition} holds.   
\end{lemma}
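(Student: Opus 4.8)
The plan is to transport a Kazhdan--Lusztig character formula for the affine Lie algebra $\widehat{{}^L\g}$ through the exact $+$-reduction functor, using the geometric input already assembled in the preceding paragraph. That paragraph produces, for each pair $(\alpha,\lambda^\bullet)$, the unique $y_{\alpha,\lambda^\bullet}\in\hat{W}_{{}^L\g}$ conjugating $\hat\nu:=-p(\alpha+\lambda^\bullet+\rho)+p\lambda_\bullet+\check{k}\Lambda_{0,{}^L\g}$ into the dominant cone $\mathcal{C}^+_{\widehat{{}^L\g}}$, and shows that it depends only on $\alpha$ and $\lambda^\bullet$ (through whether $\alpha+\lambda^\bullet+\rho\in Q$) and not on $\lambda_\bullet$. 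So the first thing I would record is that $\mu_\lambda=y_{\alpha,\lambda^\bullet}^{-1}\circ_{\widehat{{}^L\g}}\hat\nu$ from \eqref{the element in C+ in on-super case} is precisely the dominant representative of the linkage class of $\hat\nu$, with $\hat\nu=y_{\alpha,\lambda^\bullet}\circ_{\widehat{{}^L\g}}\mu_\lambda$.

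The first real step is to verify that $\hat\nu$ (equivalently $\mu_\lambda$) lies in the block for which \cite[Theorem 1.1]{KT} applies. Under the strong condition $(p\lambda_\bullet+\rho^\vee,{}^L\theta)\le p$, i.e. $p\lambda_\bullet$ in the fundamental alcove by Lemma \ref{lemm: strong condition for our cases}, the pairings computed in the preceding paragraph place $\hat\nu+\hat\rho_{{}^L\g}$ in the required chamber, and integrality with respect to the affine root lattice of $\widehat{{}^L\g}$ follows from $\beta=\sqrt{p}(-\alpha+\lambda)\in(\sqrt{p}Q)^\ast$. Granting this, \cite[Theorem 1.1]{KT} yields the first displayed identity, the coefficients being the inverse Kazhdan--Lusztig values $a_{y,y_{\alpha,\lambda^\bullet}}=Q_{y,y_{\alpha,\lambda^\bullet}}(1)$ summed over $y\ge y_{\alpha,\lambda^\bullet}$ in the Bruhat order.

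Next I would apply the $+$-reduction $H^0_{\mathrm{DS},+}(-)$, which is exact on the relevant block of category $\mathcal{O}$ by \cite[Theorem 7.7.1]{Ar} and hence descends the Grothendieck-group identity to $\mathbf{W}^{\check{k}}({}^L\g)$. Here the functor sends the affine Verma $\hat{M}_{\check{k}}(y\circ_{\widehat{{}^L\g}}\mu_\lambda)$ to the $W$-algebra Verma $\check{\mathbf{M}}_{\check{k}}(\chi_{y\circ_{\widehat{{}^L\g}}\mu_\lambda+p\rho-\check{k}\Lambda_{0,{}^L\g}})$ and the simple $\hat{L}_{\check{k}}(\hat\nu)$ to $\check{\mathbf{L}}_{\check{k}}(\chi_{p(-\alpha+\lambda)})$; the latter weight is consistent because $\hat\nu=y_{\alpha,\lambda^\bullet}\circ_{\widehat{{}^L\g}}\mu_\lambda$ gives, in finite part, $\hat\nu+p\rho-\check{k}\Lambda_{0,{}^L\g}=p(-\alpha+\lambda)$, the differing translation parts of the various $y$ being exactly what makes the $\chi$-values on the Verma side distinct. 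Applying the exact functor termwise produces the second displayed identity; the nonvanishing $H^0_{\mathrm{DS},+}(\hat{L}_{\check{k}}(\hat\nu))\ne 0$, needed so that the image is genuinely the simple module, is supplied by the same input of \cite[Theorem 7.7.1]{Ar}.

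Finally, since $y_{\alpha,\lambda^\bullet}$ is independent of $\lambda_\bullet$, so are the coefficients $a_{y,y_{\alpha,\lambda^\bullet}}$: writing $\mu_{\lambda_i}$ for two choices $\lambda_0,\lambda_1\in\Lambda$ with a common $\lambda^\bullet$, the same family $\{a_{y,y_{\alpha,\lambda^\bullet}}\}$ appears in both expansions, which is exactly Lemma \ref{new simplicity theorem}\eqref{formal KL decomposition}. The main obstacle I anticipate is the second step: pinning down the precise chamber and regularity hypotheses so that \cite[Theorem 1.1]{KT} applies verbatim rather than in a singular or parabolic variant (the equality case of the strong condition, and the upper end $\lambda_i=r_i^\vee m$, threaten to place $\mu_\lambda$ on an alcove wall), together with the exact weight-matching and nonvanishing of the $+$-reduction, since a single misidentified $\rho$- or level-shift would corrupt the highest weights on the $W$-algebra side.
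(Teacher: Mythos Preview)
Your proposal is correct and follows essentially the same route as the paper: the paper does not give a separate proof, but deduces the first identity directly from \cite[Theorem 1.1]{KT} (using the preceding paragraph's verification that $y_{\alpha,\lambda^\bullet}$ exists, is unique, and is independent of $\lambda_\bullet$) and the second identity by pushing through the exact $+$-reduction of \cite[Theorem 7.7.1]{Ar}. Your added caution about the boundary case of the strong condition and the precise $\rho$-shifts is reasonable but not something the paper elaborates on.
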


Let us apply the discussion in Section \ref{section: simplicity revisit} to our case.
In Lemma \ref{new simplicity theorem}, set 
\begin{align}
\label{setup for lem 2.6}
\begin{split}
&\beta=\alpha+\lambda^\bullet,
\ \ 
(\lambda_0,\lambda_1)=(0,\lambda_\bullet),
\ \ 
a_{y,\beta}=Q_{y,y_{\alpha,\lambda^\bullet}}(1),\\
&\text{$\mu_{\lambda_i}$ is \eqref{the element in C+ in on-super case} for each case}, 
\ \ 
y_\sigma=t_{r^\vee(\alpha+\lambda^\bullet-\sigma\circ(\alpha+\lambda^\bullet))}y_{\alpha,\lambda^\bullet}\ \ (\sigma\in W),\\
&{\mathbb{M}}(y,\mu_{\lambda_i})
=
\check{\mathbf{M}}_{\check{k}}(\chi_{y\circ_{\widehat{{}^L\g}}\mu_\lambda+p\rho-\check{k}\Lambda_{0,{}^L\g}}),
\ \ 
{\mathbb{L}}(-\beta+\lambda_i)
=
\check{\mathbf{L}}_{\check{k}}(\chi_{p(-\alpha+\lambda)}),
\end{split}
\end{align}
respectively.
Then the conditions Lemma \ref{new simplicity theorem}\eqref{formal KL decomposition},\eqref{simplicity at lambda0} are already satisfied.
On the other hand, the condition Lemma \ref{new simplicity theorem}\eqref{Verma coincidence condition},\eqref{Verma and staggered B-module} follows from \eqref{compare the Delta and h} and \cite[(29)]{ACL}.
Then by Lemma \ref{new simplicity theorem} and Lemma \ref{lemma: top component}, $\mathcal{W}_{-\alpha+\lambda}$ is simple as $\mathcal{W}_0$-module.
In particular, each $\mathcal{W}_{-\alpha+\lambda}$ is a cyclic $\mathcal{W}_0$-module.
Finally, by applying Lemma \ref{lemma: general simplicity theorem for lambda is nonzero}, the simplicity of $H^0(G\times_BV_\lambda)$ also holds.
In conclusion, we have the following:

\begin{theorem}
\label{simplicity theorem in non-super case}
Let us consider the non-super case \eqref{non-super case} and  
$H^0(G\times_BV_\lambda)\simeq \bigoplus_{\alpha\in P_+\cap Q}L_{\alpha+\lambda^\bullet}\otimes \mathcal{W}_{-\alpha+\lambda}$.
Then, for \eqref{k and check-k in non-super case} and $m\geq {}^Lh^\vee-1$, we have the VOA-isomorphism
\begin{align*}
\mathcal{W}_0
\simeq
\mathbf{W}^{\check{k}}({}^L\g)
\simeq
\mathbf{W}_{\check{k}}({}^L\g)
\simeq
\mathbf{W}^{k}(\g)
\simeq
\mathbf{W}_{k}(\g)
\end{align*}
and each $\mathcal{W}_{-\alpha+\lambda^\bullet}$ is simple as $\mathcal{W}_0$-module.
More generally, for $\lambda\in\Lambda$ such that $(p\lambda_\bullet+\rho^\vee,{}^L\theta)\leq p$, 
\begin{align*}
\mathcal{W}_{-\alpha+\lambda}
&\simeq
\check{\mathbf{L}}_{\check{k}}(\chi_{p(-\alpha+\lambda)})
\simeq
\check{\mathbf{T}}^{m}_{p\lambda_\bullet,\alpha+\lambda^\bullet}
\ \ 
\text{(as $\mathbf{W}^{\check{k}}({}^L\g)$-modules)}\\
&\simeq
\mathbf{L}_{k}(\chi_{\alpha-\lambda})
\simeq 
\mathbf{T}^{\frac{1}{p}}_{\alpha+\lambda^\bullet,p\lambda_\bullet}
\ \ 
\text{(as $\mathbf{W}^{k}(\g)$-modules)},
\end{align*}
and $H^0(G\times_BV_\lambda)$ is simple as $H^0(G\times_BV_0)$-module, respectively.
\end{theorem}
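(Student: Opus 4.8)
The plan is to prove the three assertions of the theorem in turn: the isomorphism chain identifying $\mathcal{W}_0$ with the principal $W$-algebra, the simplicity and explicit identification of the modules $\mathcal{W}_{-\alpha+\lambda}$ under the strong condition, and finally the simplicity of the whole Feigin--Tipunin construction $H^0(G\times_BV_\lambda)$.

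\emph{Identifying $\mathcal{W}_0$.} First I would set up the Miura embedding $\mathbf{W}^k(\g)\hookrightarrow\bigcap_{i\in I}\ker e^{-\frac{1}{k+h^\vee}b_i}_{(0)}|_{M(0)}$ and match the conformal vector \eqref{conformal vector for the case of non-super} against the one of \cite{ArF}, so that for $k=\tfrac1p-h^\vee$ and $b_i=-\tfrac1{\sqrt p}\alpha_i$ the right-hand side is exactly $\mathcal{W}_0$; this produces the embedding \eqref{embedding of W-algebra into W_0}. To promote it to an isomorphism I would compare graded characters: specializing the Arakawa--Frenkel identity \eqref{character of Arakawa-Frenkel module} at $\alpha=\lambda=0$ yields $\ch_q\mathbf{W}^{\frac1p-h^\vee}(\g)=\ch_q\mathcal{W}_0$, forcing the (graded, injective) Miura map to be onto. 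Feigin--Frenkel duality \eqref{FF duality for non-super case} then supplies $\mathbf{W}^k(\g)\simeq\mathbf{W}^{\check k}({}^L\g)$, and the simplicity input of Corollary \ref{vacuum simplicity for non-super case} at $\beta=0$ identifies each universal $W$-algebra with its simple quotient; together these give the full chain \eqref{W_0 and W-algebra coincides in non-super case}. The simplicity of $\mathcal{W}_{-\alpha+\lambda^\bullet}$ for $\alpha\in P_+\cap Q$ is then precisely the $\lambda_\bullet=0$ part of Corollary \ref{vacuum simplicity for non-super case}.

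\emph{Simplicity for general $\lambda$.} Here I would feed the abstract comparison device Lemma \ref{new simplicity theorem} with $\lambda_0=0$, $\lambda_1=\lambda_\bullet$, $\beta=\alpha+\lambda^\bullet$, taking $\mathbb{M}(y,\mu_{\lambda_i})=\check{\mathbf{M}}_{\check k}(\chi_{y\circ\mu_\lambda+p\rho-\check k\Lambda_{0,{}^L\g}})$ and $\mathbb{L}(-\beta+\lambda_i)=\check{\mathbf{L}}_{\check k}(\chi_{p(-\alpha+\lambda)})$, as in the dictionary following Lemma \ref{uniqueness of KL}. Condition \eqref{formal KL decomposition} is the Kazhdan--Lusztig decomposition of Lemma \ref{uniqueness of KL}, transported from $\widehat{{}^L\g}$ to the $W$-algebra through the exact $+$-reduction \cite[Theorem 7.7.1]{Ar}; condition \eqref{simplicity at lambda0} is the vacuum simplicity from the previous step; and conditions \eqref{Verma coincidence condition}, \eqref{Verma and staggered B-module} follow from the weight/conformal-weight matching \eqref{compare the Delta and h} and the highest-weight data \cite[(29)]{ACL}. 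Lemma \ref{new simplicity theorem} then yields $\ch\mathcal{W}_{-\alpha+\lambda}=\ch\check{\mathbf{L}}_{\check k}(\chi_{p(-\alpha+\lambda)})$, and Lemma \ref{lemma: top component} upgrades this equality of characters to an isomorphism $\mathcal{W}_{-\alpha+\lambda}\simeq\check{\mathbf{L}}_{\check k}(\chi_{p(-\alpha+\lambda)})$, hence simplicity; the identifications with $\check{T}^{m}_{p\lambda_\bullet,\alpha+\lambda^\bullet}$ and (via Feigin--Frenkel duality of the modules) with $\mathbf{L}_{k}(\chi_{\alpha-\lambda})$ and $\mathbf{T}^{\frac1p}_{\alpha+\lambda^\bullet,p\lambda_\bullet}$ then follow from the character identity \eqref{character of Arakawa-Frenkel module} once simplicity is in hand. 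Finally, each $\mathcal{W}_{-\alpha+\lambda}$ being simple is CFT-type, so Lemma \ref{lemma: general simplicity theorem for lambda is nonzero}, applied in the non-super case where \eqref{assumption: global VOA structure for P} holds, gives the simplicity of $H^0(G\times_BV_\lambda)$ as an $H^0(G\times_BV_0)$-module.

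I expect the crux to be the combinatorial content of Lemma \ref{uniqueness of KL}: showing that the dominant representative $y_{\alpha,\lambda^\bullet}$ of the relevant $\widehat{{}^L\g}$-orbit, and hence the coefficients $a_{\beta,y}$ of the Kazhdan--Lusztig decomposition, is independent of $\lambda_\bullet$ exactly when the strong condition $(p\lambda_\bullet+\rho^\vee,{}^L\theta)\le p$ holds (equivalently, when $p\lambda_\bullet$ lies in the fundamental alcove, by Lemma \ref{lemm: strong condition for our cases}\eqref{lemm: strong condition for non-super case}). This independence is exactly what lets Lemma \ref{new simplicity theorem} compare the $\lambda_0=0$ and $\lambda_1=\lambda_\bullet$ cases, and establishing it requires a type-by-type analysis of the action of $W\ltimes r^\vee Q$ on the dominance chamber $\mathcal{C}^+_{\widehat{{}^L\g}}$ for the non-simply-laced $\g$. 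A secondary point to be careful about is that the $+$-reduction used to descend the character formula to $\mathbf{W}^{\check k}({}^L\g)$ is genuinely exact here; this is available from \cite[Theorem 7.7.1]{Ar} in the non-super case, in contrast to the super case where it must be assumed.
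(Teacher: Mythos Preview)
Your proposal is correct and follows essentially the same route as the paper: Miura embedding plus the character identity \eqref{character of Arakawa-Frenkel module} at $\alpha=\lambda=0$ to identify $\mathcal{W}_0$, Corollary \ref{vacuum simplicity for non-super case} for the vacuum simplicity, then Lemma \ref{new simplicity theorem} fed by the Kazhdan--Lusztig decomposition of Lemma \ref{uniqueness of KL} (whose $\lambda_\bullet$-independence under the strong condition is indeed the combinatorial crux, handled in the paper by an explicit check for $\g=B_r$ with the other types declared similar), upgraded via Lemma \ref{lemma: top component}, and finished off with Lemma \ref{lemma: general simplicity theorem for lambda is nonzero}. Your identification of the exactness of the $+$-reduction from \cite{Ar} as the key analytic input available here (and not in the super case) is also exactly how the paper frames it.
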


\begin{remark}\label{Section4-Remark4.4}
It is expected that the simple modules of $H^0(G\times_BV_0)$ are classified by $\Lambda$. 
Constructing simple modules for general $\lambda\in\Lambda$ 
seems to require the development of not only our geometric approach, but also the W-algebraic analysis of the Verma module $\mathbf{M}_k(\chi_\mu)=H^0_{\mathrm{DS},\check{\mu}}(\hat{M}(\hat{\lambda}))$.
The latter is currently being pursued by T. Arakawa and the second author to investigate the tensor category structure of W-algebras at generic levels.
\end{remark}

\subsection{The super case \eqref{super case}}
\label{subsection: simplicity theorem for super case}
In this subsection, set 
\begin{align} 
\label{k and check-k in super case}
{k}=\tfrac{1}{2(2m-1)}-(r+\tfrac{1}{2}),\ \ 
\check{k}=m-r-1,\ \ 
h^\vee=h^\vee_{\osp(1|2r)}=r+\tfrac{1}{2}.
\end{align}
Then $4(k+h^\vee)(\check{k}+h^\vee)=1$ and
by \cite[Theorem 6.3]{Gen1} and Corollary \ref{vacuum simplicity for super case}, we have
\begin{align}
\label{W_0 and W-algebra coincides in super case}
\mathcal{W}_0
\simeq 
\mathbf{W}^{k}(\osp(1|2r))
\simeq
\mathbf{W}_{k}(\osp(1|2r))
\simeq
\mathbf{W}^{\check{k}}(\osp(1|2r))
\simeq
\mathbf{W}_{\check{k}}(\osp(1|2r))
\end{align}
and each $\mathcal{W}_{-\alpha}$ ($\alpha\in P_+\cap Q$) is simple as $\mathcal{W}_0$-module.
In the same manner as above, we have the following:

\begin{lemma}\label{topspace1}
The top component of $M(-\sqrt{p}(\alpha+\lambda))$ and $\mathcal{W}_{-\alpha+\lambda}$ are 
$\C_{\chi_{\alpha-\lambda}}$ as $\operatorname{Zhu}(\mathbf{W}_{{k}}(\mathfrak{osp}(1|2r))$-modules
and
$\mathbb{C}_{\chi_{p(-\alpha+\lambda)}}$
as $\operatorname{Zhu}(\mathbf{W}_{\check{k}}(\mathfrak{osp}(1|2r))$-modules, respectively.
In particular, if the character of these modules coincides with that of $\mathbf{L}_{{k}}(\chi_{-\alpha+\lambda})$ or $\mathbf{L}_{\check{k}}(\chi_{p(-\alpha+\lambda)})$, then they are isomorphic to $\mathbf{L}_{{k}}(\chi_{-\alpha+\lambda})$ or $\mathbf{L}_{\check{k}}(\chi_{p(-\alpha+\lambda)})$.
\end{lemma}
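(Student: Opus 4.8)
The plan is to mirror the proof of the non-super Lemma \ref{lemma: top component} (which itself follows \cite[Lemma 3.14, 3.19]{Sug2}), adapting the bookkeeping to the fermionic data \eqref{setup in the super case}. First I would invoke the free-field (Miura) realization of $\mathbf{W}^k(\osp(1|2r))$ from \cite[Theorem 6.3]{Gen1}, which realizes this algebra inside a Heisenberg-times-free-fermion vertex algebra; this makes each Fock module $M(\mu)$, paired with the fermion $F$ as in \eqref{setup in the super case}, a $\mathbf{W}^k(\osp(1|2r))$-module, the super analogue of \eqref{embedding of W-algebra into W_0}. Under the chain of identifications \eqref{W_0 and W-algebra coincides in super case} this is simultaneously a $\mathbf{W}_k(\osp(1|2r))$- and $\mathbf{W}_{\check{k}}(\osp(1|2r))$-module, and $\mathcal{W}_{-\alpha+\lambda}$, realized inside $V_\lambda$ as the multiplicity space of $L_{\alpha+\lambda^\bullet}$, is a module over the same algebra.

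Next I would pin down the top (lowest conformal-weight) subspace. By the weight formula \eqref{conformal weight of lattice point vector}, the fermionic shift $\Delta_\phi=\tfrac12$, and the conformal vector \eqref{conformal vector for the case of super}, the lowest-weight space is the one-dimensional line spanned by $e^{-\sqrt{p}(\alpha+\lambda)}$ times the fermionic vacuum, so that $\mathcal{W}_{-\alpha+\lambda}$ likewise has a one-dimensional top component. The Zhu algebra of $\mathbf{W}_k(\osp(1|2r))$ then acts on this line by a character, and the task is to identify it. Using the parametrization of such characters by central characters $\chi_\mu$ of $Z(\g)$ from \cite[(27),(29)]{ACL}, the character is determined by matching the top conformal weight $h_\mu$ of \cite[(30)]{ACL} against $\Delta_\mu$ of \eqref{conformal weight of lattice point vector} — the super analogue of \eqref{compare the Delta and h}. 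With the rescaling $p=2m-1$ and the choice $x=\tfrac1p\rho$ of \eqref{W-action in super case}, this yields $\C_{\chi_{\alpha-\lambda}}$ as a module over $\operatorname{Zhu}(\mathbf{W}_k(\osp(1|2r)))$ and $\C_{\chi_{p(-\alpha+\lambda)}}$ as a module over $\operatorname{Zhu}(\mathbf{W}_{\check{k}}(\osp(1|2r)))$, for $k,\check{k}$ as in \eqref{k and check-k in super case}.

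The final ``in particular'' assertion is then the standard highest-weight argument. Both $M(-\sqrt{p}(\alpha+\lambda))$ and $\mathcal{W}_{-\alpha+\lambda}$ are cyclic over $\mathcal{W}_0$, generated by their one-dimensional top component, so each admits the corresponding irreducible module $\mathbf{L}_{k}(\chi_{-\alpha+\lambda})$ (resp. $\mathbf{L}_{\check{k}}(\chi_{p(-\alpha+\lambda)})$) as its unique simple quotient. Consequently, if the graded character of the module agrees with that of this irreducible quotient, the canonical surjection is forced to be a graded isomorphism.

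The hard part will be the Zhu-algebra computation of the second paragraph in the super setting: \cite{ACL} works with the ordinary center $Z(\g)$ of a simple Lie algebra, whereas here one must verify that $\operatorname{Zhu}(\mathbf{W}_k(\osp(1|2r)))$ admits the analogous description and that the super counterparts of \cite[(27)--(30)]{ACL} hold, tracking carefully the free-fermion contribution from $F$ to the conformal weight. I expect this to go through by the Feigin-Frenkel duality \eqref{FF duality for super case} together with the self-consistency already recorded in \eqref{W_0 and W-algebra coincides in super case}, but it is precisely where the genuine content of the lemma lies; the remaining steps are formal.
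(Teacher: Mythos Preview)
Your proposal is correct and matches the paper's approach: the paper proves Lemma~\ref{topspace1} by the single sentence ``In the same manner as above,'' referring back to Lemma~\ref{lemma: top component}, which in turn cites \cite[Lemma 3.14, 3.19]{Sug2}. Your outline is a faithful unpacking of exactly that argument in the super setting, including the correct identification of where the nontrivial bookkeeping lies (the Zhu-algebra description for $\mathbf{W}^k(\osp(1|2r))$ and the fermionic contribution to conformal weight).
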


We aim to apply the discussion in Section \ref{section: simplicity revisit} (see the last two paragraphs).
First let us consider Lemma \ref{new simplicity theorem} in our case ($\lambda_0=0$, $\lambda_1=\lambda$ such that $\lambda^\bullet=0$ and $(p\lambda_\bullet+\rho,{}^L\theta)\leq p$).
The condition Lemma \ref{new simplicity theorem}\eqref{simplicity at lambda0} is already checked in the above discussion.
By \cite{KRW} and the results in Section \ref{Section: free field algebra}, we have 
\begin{align*}
\ch_q\mathbf{M}_{\check{k}}(\chi_{\mu})
=
\tfrac{\ch_qF}{\eta(q)^r}{q^{\frac{1}{2p}|\mu-p\rho+\frac{1}{p}\rho|^2}},\ \ 
\text{in particular, 
$\ch_q\mathbf{M}_{\check{k}}(\chi_{p(-\alpha+\lambda)})
=
\ch_qV_{\lambda}^{h=\sigma\circ(\alpha+\lambda^\bullet)}$
}
\end{align*}
and the condition Lemma \ref{new simplicity theorem}\eqref{Verma coincidence condition},\eqref{Verma and staggered B-module} are satisfied.

Lastly, we want to check the Lemma \ref{new simplicity theorem}\eqref{formal KL decomposition}.
However, to the authors' knowledge, the KL type decomposition as \cite{KT,Ar} has not been proven in our case $\osp(1|2r)$.
In this paper, we will prove the KL type decomposition (and Lemma \ref{new simplicity theorem}\eqref{formal KL decomposition}) in the case of \textit{twisted affine Lie algebra of type $A^{(2)}_{2r}$}, 
and prove the simplicity theorem for super-case \eqref{super case} under the assumption that the characters in these two types are the same under the coordinate change.
Let us recall the data in $\hat{\g}'=A_{2r}^{(2)}$:\footnote{Note that here we use the second description in \cite{Car}. On the other hand, \cite{Kac2, Wak} uses the first description of \cite{Car}.
The classical part is different for these two descriptions: For the first one (resp. second one), it is type $C_n$ (resp. type $B_n$).}
\begin{align*}
&\hat{W}_{\hat{\g}'}=W\ltimes Q,
\ \ 
h^\vee_{A_{2r}^{(2)}}=2r+1=2h^\vee_{\osp(1|2r)},
\\
&{\Delta}^{\mathrm{re}}_{+,\hat{\g}'}
=\{
\alpha_{\pm}+n\delta
\ |\ 
\alpha_{\pm}\in\Delta_{\pm},\ n\in\Z_{>0}
\}
\sqcup
\{2\alpha_{\pm}^s+(2n+1)\delta
\ |\ 
\alpha_{\pm}^s\in\Delta_{\pm}^s,\ n\in\Z_{\geq 0}\}
\sqcup
\Delta_{+},\\
&{\mathcal{C}}^+_{\hat{\g}'}
=
\{
\mu\in\hat\h^\ast
\ |\ 
(\gamma^\vee,\mu+\hat{\rho}_{A_{2r}^{(2)}})\in \Z_{\geq 0}\ \text{for any $\gamma\in{\Delta}^{\mathrm{re}}_{+,\hat{\g}'}$}
\},
 \ \ 
\hat{\rho}_{A_{2r}^{(2)}}=\rho+h^\vee_{A_{2r}^{(2)}}\Lambda_0^c,
\end{align*}
where all classical terms are of type $B_n$ (see \cite[Section 6]{Kac2}), and $\Lambda_0^c=\frac12\Lambda_0$.
By \cite[Lemma 2.10]{KT}, there exists unique $\sigma t_{\beta}\in\hat{W}_{\hat{\g}'}=W\ltimes Q$ such that
\begin{align*}
&\sigma t_\beta\circ_{\hat{\g}'}(-p(\alpha+\lambda^\bullet+\rho^\vee)+p\lambda_\bullet+\check{k}\Lambda_0)\\
&=
\sigma(p(\beta-\alpha-\lambda^\bullet-\rho^\vee)+p\lambda_\bullet+\rho)+p\Lambda_0^c-\hat{\rho}_{A_{2r}^{(2)}}
\in
{\mathcal{C}}^+_{\hat{\g}'}.
\end{align*}
In other words, we have
\begin{align*}
0\leq (p(\beta-\alpha-\lambda^\bullet-\rho^\vee)+p\lambda_\bullet+\rho,\sigma^{-1}(\alpha_+^\vee))\leq p\ \ \text{for any $\alpha_+\in\Delta_+$}.
\end{align*}
In the same manner as Section \ref{subsection: simplicity theorem for non-super case}, we can  determine such a ($\lambda_\bullet$-independent) $\sigma t_\beta$ explicitly as follows:
\begin{align*}
\begin{cases}
(\sigma,\beta)=(\operatorname{id},\alpha+\rho^\vee),
\ \ 
\mu_\lambda=p\lambda+\check{k}\Lambda_0
&(\lambda^\bullet=0)\\
(\sigma,\beta)=(\sigma_r,\alpha+\rho^\vee),
\ \ 
\mu_\lambda=\sigma_r\circ_{\hat{\g}'}(p\lambda+\check{k}\Lambda_0)
&(\lambda^\bullet=\varpi_r)
\end{cases}
\end{align*}
By combining it with \cite[Theorem 1.1]{KT}, we obtain the KL type decomposition in $A_{2r}^{(2)}$ case.
\begin{lemma}\label{KL character formula for twisted A_2n}
For $\alpha\in P_+\cap Q$ and $\lambda\in\Lambda$ in \eqref{the representatives in super case} s.t. $(p\lambda_\bullet+\rho,{}^L\theta)\leq p$, we have
\begin{align*}
\ch_q\hat{L}(-p(\alpha+\lambda^\bullet+\rho^\vee)+p\lambda_\bullet+\check{k}\Lambda_0)_{\hat{\g}'}
=
\sum_{y\geq y_{\alpha,\lambda^\bullet}}
a_{y,y_{\alpha,\lambda^\bullet}}
\ch_q\hat{M}(y\circ_{\hat{\g}'}\mu_\lambda)_{\hat{\g}'},
\end{align*}
where
$a_{y,w}=Q_{y,w}(1)$ for the inverse Kazhdan-Lusztig polynomial $Q_{y,w}(q)$ and
\begin{align*}
y_{\alpha,\lambda^\bullet}
=
\begin{cases}
t_{-\alpha-\rho^\vee}&(\lambda^\bullet=0),\\
t_{-\alpha-\rho^\vee}\sigma_r& (\lambda^\bullet=\varpi_r),
\end{cases}
\ \ 
\mu_\lambda
=
\begin{cases}
p\lambda+\check{k}\Lambda_0& (\lambda^\bullet=0),\\  
\sigma_r\circ_{\hat{\g}'}(p\lambda+\check{k}\Lambda_0)& (\lambda^\bullet=\varpi_r).
\end{cases}
\end{align*}
\end{lemma}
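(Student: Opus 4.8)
The plan is to obtain the formula as a direct application of the Kazhdan--Lusztig character theorem \cite[Theorem 1.1]{KT} to the symmetrizable twisted affine Kac--Moody algebra $\hat{\g}'=A_{2r}^{(2)}$; the substantive work is to (i) identify the dominant-chamber representative $\mu_\lambda\in\mathcal{C}^+_{\hat{\g}'}$ of the relevant integral weight and (ii) compute the Weyl group element $y_{\alpha,\lambda^\bullet}$ conjugating it to the weight of interest. Writing $\nu=-p(\alpha+\lambda^\bullet+\rho^\vee)+p\lambda_\bullet+\check{k}\Lambda_0$, one observes that $\nu$ is $\hat{\g}'$-integral and that $\hat{L}(\nu)$ is exactly the simple module on the left-hand side. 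First I would invoke \cite[Lemma 2.10]{KT} to produce the unique $\sigma t_\beta\in\hat{W}_{\hat{\g}'}=W\ltimes Q$ with $\sigma t_\beta\circ_{\hat{\g}'}\nu\in\mathcal{C}^+_{\hat{\g}'}$, and set $\mu_\lambda=\sigma t_\beta\circ_{\hat{\g}'}\nu$ and $y_{\alpha,\lambda^\bullet}=(\sigma t_\beta)^{-1}$, so that $\nu=y_{\alpha,\lambda^\bullet}\circ_{\hat{\g}'}\mu_\lambda$.

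The heart of the argument is the explicit determination of $\sigma t_\beta$, which runs in close parallel to the non-super computation in Section \ref{subsection: simplicity theorem for non-super case}. Membership in $\mathcal{C}^+_{\hat{\g}'}$ is equivalent, after pairing with the positive real coroots, to the system of inequalities $0\leq (p(\beta-\alpha-\lambda^\bullet-\rho^\vee)+p\lambda_\bullet+\rho,\sigma^{-1}(\alpha_+^\vee))\leq p$ for every $\alpha_+\in\Delta_+$. Expanding $\beta-\alpha-\lambda^\bullet-\rho^\vee$ in the fundamental coweight basis and invoking the strong condition $(p\lambda_\bullet+\rho,{}^L\theta)\leq p$ of Lemma \ref{lemm: strong condition for our cases}\eqref{lemm: strong condition for super case}, I would argue coordinatewise that each coefficient is forced to be $0$ or $-1$, exactly as in the non-super case, so that $\sigma$ is pinned down by the resulting sign pattern and $\beta=\alpha+\rho^\vee$. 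The two alternatives are then separated according to whether $\alpha+\lambda^\bullet+\rho^\vee\in Q$, by an extremal-root argument analogous to the $a_{\pm\theta_s}$ analysis of the non-super case, yielding $(\sigma,\beta)=(\operatorname{id},\alpha+\rho^\vee)$ for $\lambda^\bullet=0$ and $(\sigma_r,\alpha+\rho^\vee)$ for $\lambda^\bullet=\varpi_r$, hence the stated $y_{\alpha,\lambda^\bullet}$ and $\mu_\lambda$. The crucial qualitative output of this step is that $\sigma t_\beta$ depends only on $\alpha$ and $\lambda^\bullet$ and not on $\lambda_\bullet$; this uniformity is precisely what is needed to feed into Lemma \ref{new simplicity theorem}\eqref{formal KL decomposition}. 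With $\mu_\lambda$ and $y_{\alpha,\lambda^\bullet}$ in hand, the formula is read off directly from \cite[Theorem 1.1]{KT}.

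I expect the main obstacle to be twofold. First, one must confirm that the Kashiwara--Tanisaki machinery genuinely applies to $A_{2r}^{(2)}$ with its non-reduced real-root structure (the doubled short roots $2\alpha_{\pm}^s+(2n+1)\delta$): since $A_{2r}^{(2)}$ is symmetrizable this should be in scope, but the Bruhat-order bookkeeping and the correct identification of the positive real coroots must be handled carefully. Second, the coordinatewise inequality analysis under the strong bound is the genuinely laborious part, and, just as in \cite{Sug}, it is cleanest to record it rather than to argue uniformly. Finally, I would emphasize that this lemma establishes the decomposition only on the $A_{2r}^{(2)}$ side; transporting it to $\osp(1|2r)$, and thence to $\mathcal{W}_{-\alpha+\lambda}$ via Lemma \ref{new simplicity theorem}, rests on the separately stated assumption that the characters in the two types agree under the coordinate change, which is not proved here.
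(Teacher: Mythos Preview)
Your proposal is correct and follows essentially the same approach as the paper: invoke \cite[Lemma 2.10]{KT} to obtain the unique $\sigma t_\beta$ placing $\nu$ into $\mathcal{C}^+_{\hat{\g}'}$, reduce the chamber condition to the inequality system $0\leq (p(\beta-\alpha-\lambda^\bullet-\rho^\vee)+p\lambda_\bullet+\rho,\sigma^{-1}(\alpha_+^\vee))\leq p$, solve it coordinatewise under the strong condition exactly as in the non-super case to obtain the explicit $(\sigma,\beta)$ independent of $\lambda_\bullet$, and then read off the character formula from \cite[Theorem 1.1]{KT}. The paper records the computation with the same level of detail you describe and defers the passage to $\osp(1|2r)$ to the subsequent discussion, just as you note.
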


Let us relate type $A_{2r}^{(2)}$ and $\osp(1|2r)$.
Because the second description of $A_{2n}^{(2)}$ in \cite{Car} has the same Dynkin diagram as the one of type $B(0,r)=\osp(1|2r)$, replacing a black node with a white node, these two types are closely related. 
In particular, according to \cite[Section 9.5]{KW3}, the character formulas for the modules of affine Lie (super)algebras of type $A_{2r}^{(2)}$ and of type $B(0,r)$ are the same up to coordinate change.
In fact, choosing the coordinate of $h\in \hat{\mathfrak{h}}$ of twisted affine Kac-Moody Lie algebra of type $A_{2r}^{(2)}$:
\begin{align*}
    h:=2\pi i(-\tau\Lambda_0^c+z+u\delta).
\end{align*}
Then by proper coordinate transformation $t$, one can obtain that
\begin{align}\label{transformation formula}
    \ch_q[\hat{M}(A_{2r}^{(2)})](h)=\ch_q[\hat{M}(B(0,r)](t(h)).
\end{align}
Meanwhile, for type $A_{2n}^{(2)}$, the KL type decomposition was already proved in Lemma \ref{KL character formula for twisted A_2n} above. 
Therefore, we also have the same KL type decomposition on the $\osp(1|2r)$-side.

To apply Lemma \ref{new simplicity theorem}\eqref{formal KL decomposition} to our super case \eqref{super case}, one must move from affine to the principal $W$-algebra.
We denote the category $\mathcal{O}$ for $L_\kappa(\mathfrak{osp}(1|2r))$ by $\mathcal{O}_\kappa$ ($\kappa\in\C$).
Let $\mathcal{O}_{\kappa}^{[\lambda]}$ be the full subcategory of $\mathcal{O}_\kappa$ whose objects have their local composition factors isomorphic to $\hat{L}_{\kappa}(w\circ \lambda)$.
The exactness of the $+$-reduction $H^0_{\mathrm{DS},+}(\cdot)$ is already proved in \cite{Ar} (and thus the KL type character formula of $\mathbf{W}^\kappa(\g)$-module is derived from \cite[Theorem 1.1]{KT}), but in our case it has not known yet to the authors' knowledge.
\begin{conjecture}\label{exact+reduction'}
For $\check{k}=m-r-1$ and $\lambda\in\Lambda$ $(p\lambda_\bullet+\rho,{}^L\theta)\leq p$, the $+$-reduction functor $H^0_{\mathrm{DS},+}(\cdot)$ defines an exact functor from $\mathcal{O}_{\check{k}}^{[\lambda]}$ to $\mathbf{W}_{\check{k}}(\mathfrak{osp}(1|2r))\text{-$\mathrm{mod}$}$.
\end{conjecture}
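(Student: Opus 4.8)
The plan is to establish \emph{Conjecture \ref{exact+reduction'}} by transporting Arakawa's proof of the exactness of the $+$-reduction \cite[Theorem 7.7.1]{Ar} to the affine Lie superalgebra $\widehat{\osp(1|2r)}$, using the principal Drinfeld--Sokolov reduction in the sense of Kac--Roan--Wakimoto \cite{KRW}. Recall that $H^\bullet_{\mathrm{DS},+}(M)$ is the cohomology of a BRST complex $C_+(M)=M\otimes F^{\mathrm{ch}}\otimes F^{\mathrm{ne}}$, where $F^{\mathrm{ch}}$ is the charged ghost system attached to the loop algebra of $\mathfrak{n}_+$ and $F^{\mathrm{ne}}$ is the neutral free superfermion specific to the $\osp(1|2r)$ reduction, equipped with the differential $d=d_{\mathrm{st}}+d_{\chi_+}$ determined by the principal character $\chi_+$. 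Since $M\mapsto C_+(M)$ is exact at the level of complexes (tensoring by the fixed free-field space $F^{\mathrm{ch}}\otimes F^{\mathrm{ne}}$ is exact and $d$ is functorial in $M$), the desired exactness of $H^0_{\mathrm{DS},+}$ on the block $\mathcal{O}_{\check k}^{[\lambda]}$ is equivalent to the cohomology vanishing $H^i_{\mathrm{DS},+}(M)=0$ for all $i\neq 0$ and all $M\in\mathcal{O}_{\check k}^{[\lambda]}$: the long exact cohomology sequence of any short exact sequence in $\mathcal{O}_{\check k}^{[\lambda]}$ would then collapse to a short exact sequence concentrated in degree $0$.

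First I would reduce this vanishing to generalized Verma modules. Every object of $\mathcal{O}_{\check k}^{[\lambda]}$ admits a filtration whose subquotients are the simple highest-weight modules $\hat L_{\check k}(w\circ\lambda)$, and each such simple module is a subquotient of the Verma modules $\hat M_{\check k}(\cdot)$ entering the Kazhdan--Lusztig resolution implicit in Lemma \ref{KL character formula for twisted A_2n}. As $H^\bullet_{\mathrm{DS},+}$ commutes with the relevant limits, it suffices to prove $H^i_{\mathrm{DS},+}(\hat M_{\check k}(\mu))=0$ for $i\neq 0$ on the Verma modules $\mu=w\circ\lambda$ in the block.

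The heart of the argument is then a filtration-and-associated-graded computation in the style of \cite{Ar}. I would equip $\hat M_{\check k}(\mu)$ with its Kazhdan (good) filtration, so that the associated graded of $C_+(M)$ becomes a Koszul-type complex computing the Lie (co)homology $H^\bullet(\widehat{\mathfrak{n}}_+,\,\mathrm{gr}\,M\otimes\C_{\chi_+})$ twisted by the neutral-fermion factor. For the $+$-reduction the principal character $\chi_+$ pairs positively with every positive affine real root, and on a highest-weight module this positivity, together with the freeness of $\mathrm{gr}\,M$ over $U(\widehat{\mathfrak{n}}_-)$ (PBW), forces the associated-graded cohomology to concentrate in degree $0$; the neutral superfermion $F^{\mathrm{ne}}$ has one-dimensional cohomology and contributes only an overall shift rather than new higher-degree classes. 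A spectral-sequence argument from the filtration then lifts this concentration back to $C_+(M)$ itself, yielding the required vanishing.

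The main obstacle will be that the structural homological input Arakawa imports for symmetrizable Kac--Moody algebras --- BGG-type resolutions, the linkage principle, and the Kashiwara--Tanisaki machinery \cite{KT} --- is not fully developed for $\widehat{\osp(1|2r)}$, where the simple root $\alpha_r$ is odd and the category $\mathcal{O}$ carries odd reflections and atypicality phenomena absent in the even case. One promising route around this is to exploit the coordinate-change identification with type $A^{(2)}_{2r}$ recorded in \eqref{transformation formula}: there the Kazhdan--Lusztig decomposition of Lemma \ref{KL character formula for twisted A_2n} is available, and if the reduction functors on the two sides can be matched as \emph{functors}, not merely their characters, the exactness would transfer directly. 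Upgrading that character-level identity to an equivalence of the relevant subcategories is precisely the step I expect to be genuinely hard, and is the reason the statement is presently left as a conjecture rather than a theorem.
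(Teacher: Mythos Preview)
The paper does not prove this statement: it is explicitly labeled a \emph{Conjecture}, and immediately before it the authors write that the exactness of the $+$-reduction ``has not [been] known yet to the authors' knowledge'' in the $\osp(1|2r)$ case. They then \emph{assume} Conjecture~\ref{exact+reduction'} and derive Theorem~\ref{simplicity theorem in super case} conditionally on it. So there is no proof in the paper to compare your proposal against.

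Your write-up is a reasonable outline of the Arakawa strategy, and you are candid at the end that the hard step --- lifting the $A^{(2)}_{2r}$/$\osp(1|2r)$ character identification \eqref{transformation formula} to a functorial statement, and controlling the odd-root/atypicality issues in category $\mathcal{O}$ for the affine superalgebra --- is exactly what is missing. That is accurate, but it means what you have written is a proof \emph{sketch} with an acknowledged gap, not a proof: the reduction to Verma modules and the filtration/spectral-sequence skeleton are the easy parts of \cite{Ar}, while the substantive input (vanishing on Verma modules via the structure theory of the positive nilpotent loop algebra, compatibility of the neutral superfermion, and the requisite homological results for $\widehat{\osp(1|2r)}$) is precisely what is not established. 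In short, your proposal correctly identifies the shape of the desired argument and its obstruction, but it does not close the gap, and neither does the paper --- the statement remains open there as well.
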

If we assume the above conjecture, then we have the KL-type decomposition in our case $\mathbf{W}^{\check{k}}(\osp(1|2r))$ and the last condition Lemma \ref{new simplicity theorem}\eqref{formal KL decomposition} is also satisfied.
The remaining discussion is the same as Section \ref{subsection: simplicity theorem for non-super case}, and the conclusion is as follows:

\begin{theorem}
\label{simplicity theorem in super case}
Let us consider the super case \eqref{super case}, and set 
$H^0(G\times_BV_\lambda)\simeq \bigoplus_{\alpha\in P_+\cap Q}L_{\alpha+\lambda^\bullet}\otimes \mathcal{W}_{-\alpha+\lambda}$.
Then, for \eqref{k and check-k in super case} and
$m\geq r$, we have the VOSA-isomorphism
\begin{align*}
\mathcal{W}_0
\simeq
\mathbf{W}^{\check{k}}(\osp(1|2r))
\simeq
\mathbf{W}_{\check{k}}(\osp(1|2r))
\simeq
\mathbf{W}^{k}(\osp(1|2r))
\simeq
\mathbf{W}_{k}(\osp(1|2r))
\end{align*}
for $\check{k}=m-r-1$, $k=\tfrac{1}{2(2m-1)}-(r+\tfrac{1}{2})$,
and each $\mathcal{W}_{-\alpha}$ is simple as  $\mathcal{W}_0$-module ($\alpha\in P_+\cap Q$).
Furthermore, under the assumption of Conjecture \ref{exact+reduction'}, for any $\alpha\in P_+\cap Q$ and $\lambda\in\Lambda$ such that $\lambda^\bullet=0$ and $(p\lambda_\bullet+\rho^\vee,{}^L\theta)\leq p$, 
\begin{align*}
\mathcal{W}_{-\alpha+\lambda}
&\simeq
{\mathbf{L}}_{\check{k}}(\chi_{p(-\alpha+\lambda)})
\ \ 
\text{(as $\mathbf{W}^{\check{k}}(\osp(1|2r))$-modules)}\\
&\simeq
\mathbf{L}_{k}(\chi_{\alpha-\lambda})
\ \ 
\text{(as $\mathbf{W}^{k}(\osp(1|2r))$-modules)},
\end{align*}
and $H^0(G\times_BV_\lambda)$ is simple as $H^0(G\times_BV_0)$-module, respectively.
Finally, if one can check the simplicity for one $\lambda\in\Lambda$ such that $\lambda^\bullet=\varpi_r$ and $(p\lambda_\bullet+\rho,{}^L\theta)\leq p$, then the simplicity theorem for $\mathcal{W}_{-\alpha+\lambda}$ and $H^0(G\times_BV_\lambda)$ above holds for any $\lambda\in\Lambda$ satisfying the same condition.
\end{theorem}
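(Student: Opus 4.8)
The plan is to follow the template of Section~\ref{subsection: simplicity theorem for non-super case} verbatim and reduce everything to the abstract criterion Lemma~\ref{new simplicity theorem}, supplying its four hypotheses in the super setting. First I would dispose of the VOSA-isomorphism and the $\lambda=0$ simplicity, which are already essentially in hand: the chain $\mathcal{W}_0 \simeq \mathbf{W}^{\check{k}}(\osp(1|2r)) \simeq \cdots \simeq \mathbf{W}_k(\osp(1|2r))$ is exactly \eqref{W_0 and W-algebra coincides in super case}, combining \cite[Theorem 6.3]{Gen1} with the self-duality and simplicity of $W_{\sqrt{p}Q}$ from Corollary~\ref{vacuum simplicity for super case}, while the simplicity of each $\mathcal{W}_{-\alpha}$ for $\alpha\in P_+\cap Q$ is Lemma~\ref{lemma: general simplicity theorem for lambda is zero} under \eqref{assumption: global VOA structure for Q}. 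This settles the first displayed assertion and, with $\lambda_0=0$, furnishes hypothesis Lemma~\ref{new simplicity theorem}\eqref{simplicity at lambda0}.

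Next, for $\lambda^\bullet=0$ with $(p\lambda_\bullet+\rho,{}^L\theta)\leq p$, I would invoke Lemma~\ref{new simplicity theorem} with $\beta=\alpha$, $(\lambda_0,\lambda_1)=(0,\lambda_\bullet)$, taking $\mathbb{M}(y,\mu_{\lambda_i})=\mathbf{M}_{\check{k}}(\cdots)$ and $\mathbb{L}(-\alpha+\lambda_i)=\mathbf{L}_{\check{k}}(\chi_{p(-\alpha+\lambda_i)})$. Conditions \eqref{Verma coincidence condition} and \eqref{Verma and staggered B-module} follow from the explicit Verma character
\[
\ch_q\mathbf{M}_{\check{k}}(\chi_\mu)=\tfrac{\ch_qF}{\eta(q)^r}\,q^{\frac{1}{2p}|\mu-p\rho+\frac{1}{p}\rho|^2}
\]
coming from \cite{KRW} and the free-field data of Section~\ref{Section: free field algebra}, which matches $\ch_q\mathbf{M}_{\check{k}}(\chi_{p(-\alpha+\lambda)})=\ch_qV_\lambda^{h=\sigma\circ(\alpha+\lambda^\bullet)}$. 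The decomposition hypothesis \eqref{formal KL decomposition} is obtained by first establishing the Kazhdan--Lusztig decomposition for the twisted type $A_{2r}^{(2)}$ (Lemma~\ref{KL character formula for twisted A_2n}), transporting it to $B(0,r)=\osp(1|2r)$ via the character coincidence \eqref{transformation formula}, and then descending from the affine to the principal $W$-side through the $+$-reduction functor, which is where Conjecture~\ref{exact+reduction'} enters. Granting the conjecture, Lemma~\ref{new simplicity theorem} yields $\ch\mathcal{W}_{-\alpha+\lambda}=\ch\mathbf{L}_{\check{k}}(\chi_{p(-\alpha+\lambda)})$; Lemma~\ref{topspace1} upgrades this to the module isomorphism $\mathcal{W}_{-\alpha+\lambda}\simeq\mathbf{L}_{\check{k}}(\chi_{p(-\alpha+\lambda)})\simeq\mathbf{L}_k(\chi_{\alpha-\lambda})$, the latter through Feigin--Frenkel duality \eqref{FF duality for super case}. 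As each such module is then CFT-type, Lemma~\ref{lemma: general simplicity theorem for lambda is nonzero} applied with only \eqref{assumption: global VOA structure for Q} (precisely the reason we must restrict to $\lambda^\bullet=0$) gives simplicity of $H^0(G\times_BV_\lambda)$ as $H^0(G\times_BV_0)$-module.

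For the last assertion, the $\lambda^\bullet=\varpi_r$ sector, I would apply Lemma~\ref{new simplicity theorem} one final time, now taking $\lambda_0$ to be the single verified value and $\lambda_1$ an arbitrary $\lambda$ with $\lambda^\bullet=\varpi_r$ and $(p\lambda_\bullet+\rho,{}^L\theta)\leq p$. The mechanism is that the Kazhdan--Lusztig coefficients $a_{\beta,y}$ in \eqref{formal KL decomposition} are independent of $\lambda_i$, so one simplicity input bootstraps the entire sector; this is why the statement is phrased as a conditional implication rather than an unconditional theorem.

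The main obstacle will be hypothesis \eqref{formal KL decomposition} in the $\osp(1|2r)$ case. The Kazhdan--Lusztig decomposition is available only for the twisted affine algebra $A_{2r}^{(2)}$, and moving it to the principal $W$-superalgebra requires exactness of $H^0_{\mathrm{DS},+}(\cdot)$, i.e.\ Conjecture~\ref{exact+reduction'}, which remains open; this is the sole nontrivial input beyond the non-super argument. Secondary care is needed to confirm that the coordinate change in \eqref{transformation formula} carries the weights $\mu_\lambda$ and the Bruhat ordering $y\geq y_{\alpha,\lambda^\bullet}$ of Lemma~\ref{KL character formula for twisted A_2n} over to the $B(0,r)$-side intact, and that the $\lambda^\bullet=\varpi_r$ base point can indeed be checked, since the whole final sector is contingent on it.
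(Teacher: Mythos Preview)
Your proposal is correct and follows essentially the same approach as the paper: the paper's proof of Theorem~\ref{simplicity theorem in super case} proceeds exactly by establishing \eqref{W_0 and W-algebra coincides in super case} via \cite[Theorem 6.3]{Gen1} and Corollary~\ref{vacuum simplicity for super case}, then feeding Lemma~\ref{new simplicity theorem} with $(\lambda_0,\lambda_1)=(0,\lambda_\bullet)$ using the Verma character from \cite{KRW}, the $A_{2r}^{(2)}$ Kazhdan--Lusztig decomposition of Lemma~\ref{KL character formula for twisted A_2n} transported through \eqref{transformation formula} and Conjecture~\ref{exact+reduction'}, upgrading via Lemma~\ref{topspace1}, and concluding with Lemma~\ref{lemma: general simplicity theorem for lambda is nonzero}. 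Your identification of the obstacles and of the bootstrap mechanism for the $\lambda^\bullet=\varpi_r$ sector is also on the mark.
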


\begin{remark}\label{rem:4.8-r1}
When $r=1$, the multiplet algebra $\mathcal{W}_0$ is the $N=1$ triplet vertex
operator superalgebra studied in \cite{AM2}. In this case,
Theorem~\ref{simplicity theorem in super case} was already established in
\cite{AM2}.
\end{remark}


\begin{thebibliography}{99}
\bibitem{A1}
Adamovi\'c, D.:
A realization of certain modules for the $N=4$ superconformal algebra and the affine Lie algebra $A_2^{(1)}$.
Transformation Groups 21, 299--327 (2016).
\href{https://doi.org/10.1007/s00031-015-9349-2}{https://doi.org/10.1007/s00031-015-9349-2}



\bibitem{A2}
Adamovi\'c, D.:
Realizations of simple affine vertex algebras and their modules: the cases $\widehat{\mathfrak{sl}}(2)$ and $\widehat{\mathfrak{osp}}(1,2)$.
Commun. Math. Phys. 366, 1025--1067 (2019).
\href{https://doi.org/10.1007/s00220-019-03328-4}{https://doi.org/10.1007/s00220-019-03328-4}

\bibitem{AB}
Atiyah, M.F., Bott, R.:
A Lefschetz fixed point formula for elliptic complexes I, II.
Ann. of Math. (2) 86, 374--407 (1967); 88, 451--491 (1968).
\href{https://doi.org/10.2307/1970694}{https://doi.org/10.2307/1970694};
\href{https://doi.org/10.2307/1970721}{https://doi.org/10.2307/1970721}

\bibitem{ACGY}
Adamovi\'c, D., Creutzig, T., Genra, N., Yang, J.:
The vertex algebras $\mathcal{R}^{(p)}$ and $\mathcal{V}^{(p)}$.
Commun. Math. Phys. 383, 1207--1241 (2021).
\href{https://doi.org/10.1007/s00220-021-03950-1}{https://doi.org/10.1007/s00220-021-03950-1}

\bibitem{ArF}
Arakawa, T., Frenkel, E.:
Quantum Langlands duality of representations of $W$-algebras.
Compos. Math. 155, 2235--2262 (2019).
\href{https://doi.org/10.1112/S0010437X19007553}{https://doi.org/10.1112/S0010437X19007553}

\bibitem{ACL}
Arakawa, T., Creutzig, T., Linshaw, A.:
$W$-algebras as coset vertex algebras.
Invent. Math. 218, 145--195 (2019).
\href{https://doi.org/10.1007/s00222-019-00884-3}{https://doi.org/10.1007/s00222-019-00884-3}

\bibitem{AM}
Adamovi\'c, D., Milas, A.:
On the triplet vertex algebra $\mathcal{W}(p)$.
Adv. Math. 217, 2664--2699 (2008).
\href{https://doi.org/10.1016/j.aim.2007.11.012}{https://doi.org/10.1016/j.aim.2007.11.012}

\bibitem{AM2}
Adamovi\'c, D., Milas, A.:
The $N=1$ triplet vertex operator superalgebras: twisted sector.
SIGMA 4, 087, 24 pp. (2008).
\href{https://doi.org/10.3842/SIGMA.2008.087}{https://doi.org/10.3842/SIGMA.2008.087}

\bibitem{AM1}
Adamovi\'c, D., Milas, A.:
The $N=1$ triplet vertex operator superalgebras.
Commun. Math. Phys. 288, 225--270 (2009).
\href{https://doi.org/10.1007/s00220-009-0735-2}{https://doi.org/10.1007/s00220-009-0735-2}

\bibitem{AM3}
Adamovi\'c, D., Milas, A.:
Logarithmic vertex algebras related to $\mathfrak{sp}(4)$.
Rad Hrvatske akademije znanosti i umjetnosti.\ Matemati\v{c}ke znanosti 28, 259--281 (2024).
\href{https://doi.org/10.21857/94kl4c1j8m}{https://doi.org/10.21857/94kl4c1j8m}

\bibitem{Ar}
Arakawa, T.:
Representation theory of $W$-algebras.
Invent. Math. 169, 219--320 (2007).
\href{https://doi.org/10.1007/s00222-007-0046-1}{https://doi.org/10.1007/s00222-007-0046-1}

\bibitem{BrM}
Bringmann, K., Milas, A.:
$W$-algebras, higher rank false theta functions, and quantum dimensions.
Selecta Math. (N.S.) 23, 1249--1278 (2017).
\href{https://doi.org/10.1007/s00029-016-0289-z}{https://doi.org/10.1007/s00029-016-0289-z}

\bibitem{Car}
Carter, R.W.:
Lie Algebras of Finite and Affine Type.
Cambridge Stud. Adv. Math., vol.~96, Cambridge Univ. Press, Cambridge (2005).




\bibitem
{CLR}
T.~Creutzig, S.~Lentner, M.~Rupert,
\newblock
\href{https://arxiv.org/abs/2306.11492}{An algebraic theory for logarithmic Kazhdan-Lusztig correspondences},
\newblock
arXiv:2306.11492. 




\bibitem{CNS}
Creutzig, T., Nakatsuka, S., Sugimoto, S.:
Quasi-lisse extension of affine $\mathfrak{sl}_2$ à la Feigin--Tipunin.
Adv. Math. 448, 109717 (2024).
\href{https://doi.org/10.1016/j.aim.2024.109717}{https://doi.org/10.1016/j.aim.2024.109717}

\bibitem{CCFGH}
Cheng, M.C.N., Chun, S., Ferrari, F., Gukov, S., Harrison, S.M.:
3d modularity.
J. High Energy Phys. 2019, 010 (2019).
\href{https://doi.org/10.1007/JHEP10(2019)010}{https://doi.org/10.1007/JHEP10(2019)010}

\bibitem
{CCKPS}
M. C. N. Cheng, I. Coman, P. Kucharski, D. Passaro, G. Sgroi,
\href{https://arxiv.org/abs/2403.14920}{3d Modularity Revisited},
arXiv:2403.14920.


\bibitem{CFGHP}
Cheng, M.C.N., Chun, S., Feigin, B., Ferrari, F., Gukov, S., Harrison, S.M., Passaro, D.:
3-Manifolds and VOA Characters.
Commun. Math. Phys. 405, 44 (2024).
\href{https://doi.org/10.1007/s00220-023-04889-1}{https://doi.org/10.1007/s00220-023-04889-1}






\bibitem{DL}
Dong, C., Lepowsky, J.:
Generalized Vertex Algebras and Relative Vertex Operators.
Progr. Math., vol.~112. Birkh\"auser, Boston (1993).
\href{https://doi.org/10.1007/978-1-4612-0353-7}{https://doi.org/10.1007/978-1-4612-0353-7}

\bibitem{DM}
Dong, C., Mason, G.:
On quantum Galois theory.
Duke Math. J. 86, 305--321 (1997).
\href{https://doi.org/10.1215/S0012-7094-97-08609-9}{https://doi.org/10.1215/S0012-7094-97-08609-9}



\bibitem{FB}
Frenkel, E., Ben-Zvi, D.:
Vertex Algebras and Algebraic Curves.
Math. Surv. Monogr., vol.~88. Amer. Math. Soc., Providence, RI (2001).
\href{https://doi.org/10.1090/surv/088}{https://doi.org/10.1090/surv/088}

\bibitem{Fel}
Felder, G.:
BRST approach to minimal models.
Nucl. Phys. B 317, 215--236 (1989).
\href{https://doi.org/10.1016/0550-3213(89)90568-3}{https://doi.org/10.1016/0550-3213(89)90568-3}

\bibitem{FF}
Feigin, B., Frenkel, E.:
Duality in $W$-algebras.
Int. Math. Res. Not. 1991(6), 75--82 (1991).
\href{https://doi.org/10.1155/S1073792891000119}{https://doi.org/10.1155/S1073792891000119}

\bibitem{FF1}
Feigin, B., Frenkel, E.:
Affine Kac--Moody algebras at the critical level and Gelfand--Dikii algebras.
Int. J. Mod. Phys. A 7(Suppl.~1A), 197--215 (1992).
\href{https://doi.org/10.1142/S0217751X92003781}{https://doi.org/10.1142/S0217751X92003781}

\bibitem{FGST}
Feigin, B., Gainutdinov, A., Semikhatov, A., Tipunin, I.:
Kazhdan--Lusztig correspondence for the representation category of the triplet $W$-algebra in logarithmic CFT.
Theoret. Math. Phys. 148, 1210--1235 (2006).
\href{https://doi.org/10.1007/s11232-006-0113-6}{https://doi.org/10.1007/s11232-006-0113-6}

\bibitem{FHL}
Frenkel, I., Huang, Y.-Z., Lepowsky, J.:
On axiomatic approaches to vertex operator algebras and modules.
Mem. Amer. Math. Soc. 104, no.~494 (1993).
\href{https://doi.org/10.1090/memo/0494}{https://doi.org/10.1090/memo/0494}

\bibitem{FKRW}
Frenkel, E., Kac, V., Radul, A., Wang, W.:
$\mathcal{W}_{1+\infty}$ and $\mathfrak{W}(\mathfrak{gl}_N)$ with central charge $N$.
Commun. Math. Phys. 170, 337--357 (1995).
\href{https://doi.org/10.1007/BF02108332}{https://doi.org/10.1007/BF02108332}


\bibitem
{FT} 
B.~Feigin, I.~Tipunin,
\newblock 
\href{https://arxiv.org/abs/1002.5047}{Logarithmic CFTs connected with simple Lie algebras},
\newblock
arXiv:1002.5047.

\bibitem{Gen1}
Genra, N.:
Screening operators for $W$-algebras.
Selecta Math. (N.S.) 23, 2157--2202 (2017).
\href{https://doi.org/10.1007/s00029-017-0315-9}{https://doi.org/10.1007/s00029-017-0315-9}

\bibitem{GN}
Gannon, T., Negron, C.:
Quantum $SL(2)$ and logarithmic vertex operator algebras at $(p,1)$-central charge.
J. Eur. Math. Soc. (JEMS) (2024).
\href{https://doi.org/10.4171/JEMS/1489}{https://doi.org/10.4171/JEMS/1489}

\bibitem{GPPV}
Gukov, S., Pei, D., Putrov, P., Vafa, C.:
BPS spectra and 3-manifold invariants.
J. Knot Theory Ramifications 29, 2040003 (2020).
\href{https://doi.org/10.1142/S0218216520400039}{https://doi.org/10.1142/S0218216520400039}

\bibitem{IK1}
Iohara, K., Koga, Y.:
Representation theory of Neveu--Schwarz and Ramond algebras II: Fock modules.
Ann. Inst. Fourier (Grenoble) 53, 1755--1818 (2003).
\href{https://doi.org/10.5802/aif.1992}{https://doi.org/10.5802/aif.1992}

\bibitem{Jan}
Jantzen, J.C.:
Representations of Algebraic Groups. 2nd ed.
Math. Surv. Monogr., vol.~107. Amer. Math. Soc., Providence, RI (2003).

\bibitem{Kac1}
Kac, V.G.:
Vertex Algebras for Beginners. 2nd ed.
Univ. Lecture Ser., vol.~10. Amer. Math. Soc., Providence, RI (1998).

\bibitem{Kac2}
Kac, V.G.:
Infinite-Dimensional Lie Algebras. 3rd ed.
Cambridge Univ. Press, Cambridge (1990).

\bibitem{KW3}
Kac, V.G., Wakimoto, M.:
Integrable highest weight modules over affine superalgebras and Appell's function.
Commun. Math. Phys. 215, 631--682 (2001).
\href{https://doi.org/10.1007/s002200000315}{https://doi.org/10.1007/s002200000315}

\bibitem{KRW}
Kac, V.G., Roan, S.-S., Wakimoto, M.:
Quantum reduction for affine superalgebras.
Commun. Math. Phys. 241, 307--342 (2003).
\href{https://doi.org/10.1007/s00220-003-0926-1}{https://doi.org/10.1007/s00220-003-0926-1}

\bibitem{KT}
Kashiwara, M., Tanisaki, T.:
Characters of irreducible modules with non-critical highest weights over affine Lie algebras.
In: Representations and Quantizations (Shanghai, 1998), 275--296.
China High. Educ. Press, Beijing (2000).
\href{https://arxiv.org/abs/math/9903123}{https://arxiv.org/abs/math/9903123}

\bibitem{KT1}
Kashiwara, M., Tanisaki, T.:
Kazhdan--Lusztig conjecture for symmetrizable Kac--Moody Lie algebras, III: Positive rational case.
Asian J. Math. 2, 779--832 (1998).
\href{https://arxiv.org/abs/math/9812053}{https://arxiv.org/abs/math/9812053}

\bibitem{Li-tw}
Li, H.:
Local systems of twisted vertex operators, vertex operator superalgebras and twisted modules.
Contemp. Math. 193, 203--236 (1996).
\href{https://arxiv.org/abs/q-alg/9504022}{https://arxiv.org/abs/q-alg/9504022}

\bibitem{Li-ssel}
Li, H.:
The physics superselection principle in vertex operator algebra theory.
J. Algebra 196, 436--457 (1997).
\href{https://doi.org/10.1006/jabr.1997.7126}{https://doi.org/10.1006/jabr.1997.7126}

\bibitem{LiL}
Lepowsky, J., Li, H.:
Introduction to Vertex Operator Algebras and Their Representations.
Progr. Math., vol.~227. Birkh\"auser Boston, Boston, MA (2004).
\href{https://doi.org/10.1007/978-0-8176-8186-9}{https://doi.org/10.1007/978-0-8176-8186-9}


\bibitem{MY}
McRae, R., Yang, J.:
Structure of Virasoro tensor categories at central charge $13-6p-6p^{-1}$ for integers $p>1$.
Trans. Amer. Math. Soc. 378, 7451--7509 (2025).
\href{https://doi.org/10.1090/tran/9449}{https://doi.org/10.1090/tran/9449}





\bibitem
{McR2}
R. McRae,
\newblock
\href{https://doi.org/10.1007/s00209-019-02445-z}{On the tensor structure of modules for compact orbifold vertex operator algebras}, 
\newblock
Mathematische Zeitschrift,
Volume 296, pages 409–452, (2020).


\bibitem{NT}
Nagatomo, K., Tsuchiya, A.:
The triplet vertex operator algebra $\mathcal{W}(p)$ and the restricted quantum group at root of unity.
Adv. Stud. Pure Math., Exploring New Structures and Natural Constructions in Mathematical Physics 61, 1--49 (2011).
\href{https://projecteuclid.org/euclid.aspm/1512596930}{https://projecteuclid.org/euclid.aspm/1512596930}

\bibitem{Sug}
Sugimoto, S.:
On the Feigin--Tipunin conjecture.
Selecta Math. (N.S.) 27, 86 (2021).
\href{https://doi.org/10.1007/s00029-021-00662-1}{https://doi.org/10.1007/s00029-021-00662-1}

\bibitem{Sug2}
Sugimoto, S.:
Simplicity of higher rank triplet $W$-algebras.
Int. Math. Res. Not. IMRN 2023(8), 7169--7199 (2023).
\href{https://doi.org/10.1093/imrn/rnac190}{https://doi.org/10.1093/imrn/rnac190}

\bibitem
{Sug3} 
S.~Sugimoto, 
\newblock 
\href{https://arxiv.org/abs/2501.12985}{Hypercubic structures behind $\hat{Z}$-invariants},
\newblock
arXiv:2501.12985.

\bibitem
{Sug4} 
S.~Sugimoto, 
\newblock 
\href{https://arxiv.org/abs/2507.13996}{Nesting behind $\hat{Z}$-invariants},
\newblock
arXiv:2507.13996.


\bibitem{TK}
Tsuchiya, A., Kanie, Y.:
Fock space representations of the Virasoro algebra – intertwining operators.
Publ. Res. Inst. Math. Sci. 22, 259--327 (1986).
\href{https://ems.press/journals/prims/articles/3286}{https://ems.press/journals/prims/articles/3286}

\bibitem{Wak}
Wakimoto, M.:
Lectures on Infinite-Dimensional Lie Algebra.
World Scientific, Singapore (2001).
\href{https://www.worldscientific.com/worldscibooks/10.1142/4269}{https://www.worldscientific.com/worldscibooks/10.1142/4269}














\end{thebibliography}
\end{document}